\numberwithin{equation}{section}
\newtheorem{thm}{Theorem}[section]
\newtheorem{lemma}[thm]{Lemma}
\newtheorem{prop}[thm]{Proposition}
\newtheorem{cor}[thm]{Corollary}
\newtheorem{df}[thm]{Definition}
\newtheorem{rem}[thm]{Remark}
\newcommand{\ot}{\otimes}
\newcommand{\A}{\mathcal{A}}
\newcommand{\B}{\mathcal{B}}
\newcommand{\K}{\mathcal{K}}
\newcommand{\HH}{\mathcal{H}}
\newcommand{\M}{\mathcal{M}}
\newcommand{\R}{\mathbb{R}}
\newcommand{\C}{\mathbb{C}}
\newcommand{\N}{\mathbb{N}}
\newcommand{\Z}{\mathbb{Z}}
\newcommand{\Q}{\mathbb{H}}
\newcommand{\D}{D\mkern-11.5mu/\,}
\newcommand{\inner}[1]{\left<#1\right>}
\newcommand{\qiso}{\smash[t]{\mathrm{QISO}}\rule{0pt}{8pt}^+_J}
\newcommand{\qisot}{\widetilde{\rule{0pt}{7pt}\smash[t]{\mathrm{QISO}}}\rule{0pt}{8pt}^+_J}
\newenvironment{mat}{\bigg(\!\!\begin{array}{cc}}{\end{array}\!\!\bigg)}
\newcommand{\mc}[1]{\mathcal{#1}}
\renewcommand{\bar}{\overline}
\newcommand{\tr}{\mathrm{Tr}}
\newcommand{\qaut}{\mathbf{C_{aut,\R}}}
\newcommand{\id}{\mathtt{id}}
\title{\vspace{-1cm}Quantum gauge symmetries in noncommutative geometry}
\date{}
\author{~\\
Jyotishman Bhowmick$^{\,1}$,
Francesco D'Andrea$^{\,2}$,
Biswarup Das$^{\,3}$,
Ludwik D{\k a}browski$^{\,4}$\\[20pt]
\textit{\small $^1$ University of Oslo, Moltke Moes vei 35, 0316 Oslo, Norway}\\[4pt]
\textit{\small $^2$ Universit\`a degli Studi di Napoli Federico II,
P.le Tecchio 80, I-80125 Naples, Italy}\\[4pt]
\textit{\small $^3$ Indian Statistical Institute, 203, B.T.~Road, Kolkata, India}\\[4pt]
\textit{\small $^4$ Scuola Internazionale Superiore di Studi Avanzati (SISSA), via Bonomea 265, I-34136 Trieste, Italy}}
\begin{document}

\maketitle

\begin{abstract}\noindent%
We discuss generalizations of the notion of i) the group of unitary elements
of a (real or complex) finite dimensional $C^*$-algebra, ii) gauge transformations
and iii) (real) automorphisms, in the framework of compact quantum group theory
and spectral triples.
The quantum analogue of these groups are defined as universal (initial) objects
in some natural categories. After proving the existence of the universal objects,
we discuss several examples that are of interest to physics, as they appear in
the noncommutative geometry approach to particle physics: in particular,
the $C^*$-algebras $M_n(\R)$, $M_n(\C)$ and $M_n(\Q)$, describing the finite
noncommutative space of the Einstein-Yang-Mills systems, and the algebras
$\A_F=\C\oplus\Q\oplus M_3(\C)$ and $\A^{\mathrm{ev}}=\Q\oplus\Q\oplus M_4(\C)$,
that appear in Chamseddine-Connes derivation of the Standard Model of
particle physics minimally coupled to gravity. As a byproduct, we identify
a ``free'' version of the symplectic group $Sp(n)$ (quaternionic unitary group).
\end{abstract}


%

\section{Introduction}

In the approach to particle physics from noncommutative geometry \cite{Con94,CC97}, the dynamics of a theory is obtained from the asymptotic expansion of the spectral action associated to an almost commutative spectral triple $(\A^\infty,\HH,D,J)$, i.e.~a product of the canonical spectral triple of a spin manifold and a finite-dimensional one (see e.g.~\cite{CM08} and references therein).
A fundamental role is played by the group $U(\A^\infty)$ of unitary elements of the algebra, whose adjoint representation $u\mapsto uJuJ^{-1}$ on $\HH$ gives the group
\begin{equation}\label{eq:one}
\mc{G}(\A^\infty,J)=\{uJuJ^{-1},u\in U(\A^\infty)\}
\end{equation}
of inner fluctuations of the real spectral triple \cite[Sec.~10.8]{CM08}, also called ``gauge group'' of the spectral triple, for its relation with the gauge group of physics \cite{walter}.
For example, in the Einstein-Yang-Mills system, the finite-dimensional spectral triple describing the internal noncommutative space is built from the algebra $\A_I=M_n(\C)$, with Hilbert space $\HH_I=M_n(\C)$ carrying the left regular representation and real structure $J_I$ given by the hermitian conjugation; in this case $U(\A_I)=U(n)$ and $\mc{G}(\A_I,J_I)$ is the classical gauge group $SU(n)$, modulo a finite group given by its
center. In the more elaborated example of (the Euclidean version of) the Standard Model of elementary particles minimally coupled to gravity, the algebra is $\A_F=\C\oplus\Q\oplus M_3(\C)$ and the group $\mc{G}(\A_I,J_I)$ is $U(1)\times SU(2)\times SU(3)$ modulo $\Z_6$.

\smallskip

More generally, consider a spectral triple based on an almost commutative algebra
$$
\A^\infty:=C^\infty(\mc{M})\otimes \A_I\simeq C^\infty(\mc{M}\to \A_I) \;,
\qquad \dim \A_I<\infty \;,
$$      
with $\mc{M}$ a closed Riemannian spin manifold. A basic idea is that every physical interaction comes from a suitable ``symmetry'' of the above almost commutative space: particle interactions from local gauge symmetries, and gravitational interactions from the symmetry under diffeomorphisms. It is natural to think that a first step in the unification of particle interactions with gravity is the unification of this two types of symmetries. The key for this unification is the split short exact sequence:
$$
1\longrightarrow\mathsf{Inn}(\A^\infty)\longrightarrow\mathsf{Aut}(\A^\infty)
\longrightarrow\mathsf{Out}(\A^\infty)\longrightarrow 1 \;.
$$
In the Einstein-Yang-Mills system, $\mathsf{Out}(\A_I)$ is trivial, so that
$\mathsf{Out}(\A^\infty)=\mathsf{Out}(C^\infty(\mc{M}))$ is isomorphic to $\mathrm{Diff}(\mc{M})$
and the automorphism group $\mathsf{Aut}(\A^\infty)$ (the group of symmetries of the full ``noncommutative space'') is a semidirect product of the group of diffeomorphisms of $\mc{M}$ with the group $\mathsf{Inn}(\A^\infty)=C^\infty(\mc{M}\to\mathsf{Inn}(\A_I))$
of smooth functions with values in $\mathsf{Inn}(\A_I)=U(\A_I)/Z_I$, where $Z_I$ is the center of $U(\A_I)$. The group $\mathsf{Inn}(\A^\infty)$ is what we call the local gauge group of the theory, while $\mathsf{Inn}(\A_I)$ is the global gauge group, or gauge group `tout court'.

On the other hand, the group $\mc{G}(\A^\infty,J)$ in \eqref{eq:one}
is isomorphic to the quotient $U(\A^\infty)/U(\widetilde{\A}_J)$, where $\widetilde{\A}_J:=\{a\in\A^\infty:aJ=Ja^*\}$ is a
$*$-subalgebra of the center of $\A^\infty$ \cite[Eq.~(2.3)]{walter}. One has $\mc{G}(\A^\infty,J)\supset\mathsf{Inn}(\A^\infty)$,
with equality if{}f $\widetilde{\A}_J$ is exactly the center of $\A$, and from $\mc{G}(\A^\infty,J)$ one recovers the local
gauge transformations of physics, while $\mc{G}(\A_I,J_I)$ gives the global ones.

For the Standard Model of particle physics the situation is slighly more complicated, and explained
in Proposition 1.199 of \cite{CM08}.

\smallskip

Given the importance of the group of gauge transformations in physics, it is very natural, in the framework of noncommutative geometry, to look for compact quantum group analogues of this notion. 
In fact, the idea of using quantum group symmetries to have a better understanding of the noncommutative geometric picture behind the Standard
Model was mentioned in several places by Connes; the problem of finding a nontrivial quantum group of symmetries of the finite space $F$ is posed on the last page of \cite{Con94}. It stimulated the program of Les Houches Summer School in Theoretical Physics in 1995, as documented in \cite{CGZ95} and motivated the study in \cite{Wan98a}.
In \cite{ludwik}, an approach along this line was made, where the quantum isometry group (in the sense of \cite{Gos10,BG09}) of the finite part of the Standard Model was computed. 
It was shown that its coaction, once extended to the whole spectral triple on $C^{\infty}(\mc{M})\otimes\A_F$,
leaves invariant both the bosonic and fermionic part of the spectral action,
thus providing us with genuine quantum symmetries of the Standard Model. In this article, we wish to continue the work in \cite{ludwik}  by investigating the notion of quantum gauge symmetries  which might be helpful in having a better understanding  of the noncommutative geometry approach to particle physics. Since in many of the applications to physics, the relevant algebra is the product of a commutative one with a finite noncommutative one described by a finite-dimensional $C^*$-algebra, we restrict our attention to \emph{finite-dimensional} $C^*$-algebras. On the other hand, we need to consider both complex and real $C^*$-algebras, since in one of the main applications of spectral triples to physics -- the Standard Model of elementary particles, the $C^*$-algebra involved is real.

\smallskip

It is evident that in order to have a correct quantum analogue of
\eqref{eq:one}, we first need to make sense of a compact quantum group version of the
unitary group of a finite dimensional (possibly real) $C^*$-algebra,
and then use it to define the quantum gauge group. It is natural to wonder
whether the free quantum groups $A_u ( n ) $ or their twisted counterparts
(denoted by $ A_u ( n, R ) $  in this article), first appearing in the
seminal works of Wang and Van Daele \cite{DW96,Wan95,Wan98a} can play the role of
quantum group of unitaries of $ M_n ( \C )$. The definition of these
compact quantum groups are recalled in Sec.~\ref{sec:cqgA}. The structure and
isomorphism classification of these quantum groups were studied in \cite{Wan98b}.
Since then, a considerable amount of literature has been developed around
these quantum groups (see e.g.~\cite{Ban05a,Ban05b,Bic03} for quantum symmetries of finite
metric spaces and graphs), which have also made contact with other branches
of mathematics, like combinatorics and free probability \cite{FreeC,FreeD}. 
We believe that the compact quantum group version of the unitary group is also important from the point of view of compact quantum group theory. Indeed, we will see that  we obtain  $A_u(n,R)$ as the quantum unitary group of $M_n(\C) $ whose adjoint action preserves the state $\tr( R^t\,.\,)$, where $R$ is any positive invertible $n\times n$ matrix. The dependence on $ R $ appears because unlike the classical case, a compact quantum group coaction on $ M_n ( \C ) $ does not need to preserve the usual trace. A byproduct of this construction  for real $ C^* $-algebras shows that a ``free'' analogue of the symplectic group $Sp(n)$ (quaternionic unitary group) can be realized as the quantum unitary group of the real $C^*$-algebra $M_n(\Q)$.

The plan of the paper is as follows. In Sec.~\ref{sec:cqg} we recall some necessary background about compact -- and in particular free -- quantum groups, spectral triples and real $C^*$-algebras.
In Sec.~\ref{sec:3}, inspired by the characterization of the group of unitaries of
a $C^*$-algebra $\A$ as the universal object in a certain category of
groups having a trace preserving action on $\A$, we define the quantum
analogue by passing to the category of \emph{quantum families}
(in the spirit of \cite{woro_pseudo,soltanquantumfamily})
and relaxing the condition of traciality of the state, that is
necessary in order to accommodate non-Kac type examples like $ A_u(n,R)$.
We prove that the universal object -- that we call \emph{quantum unitary group} -- exists and has a compact quantum group structure, by explicitly computing it for any finite-dimensional (complex and real) $C^*$-algebra.

In Sec.~\ref{sec:4}, we generalize the construction \eqref{eq:one} and define the quantum gauge group of a finite dimen\-sio\-nal spectral triple,
and compute it for three examples, namely the Einstein Yang Mills system, the spectral triple over the algebra $\A^{\mathrm{ev}}=\Q\oplus\Q\oplus M_4(\C)$
and for the spectral triple for the finite part of the Standard Model.
Finally, in Sec.~\ref{sec:5}, we discuss some aspects of  quantum symmetries of  finite-dimensional real $C^*$-algebras which were not dealt with, in \cite{ludwik}.
In particular, we prove the existence of quantum automorphism group for any finite-dimensional real $C^*$-algebra and prove that for matrix algebras
$M_n(\Bbbk)$, with $\Bbbk=\R,\C$ or $\Q$,
the quantum automorphism group coincides with the classical one.

Throughout the paper, by the symbol $\otimes_{\mathrm{alg}}$ we will always mean the algebraic tensor product over $\C$,
by $\otimes$ minimal tensor product of complex $C^*$-algebras or the completed tensor product of
Hilbert modules over complex $C^*$-algebras. The symbol $\otimes_{\R}$ will denote the tensor product
over the real numbers.
Unless otherwise stated, all algebras are assumed to be unital complex
associative involutive algebras.
We denote by $\mc{M}(\A)$ the  multiplier algebra of the complex $C^*$-algebra $\A$,
by $\mc{L}(\HH)$ the adjointable operators on the Hilbert module $\HH$
and by $\K(\HH)$ the compact operators on the Hilbert space $\HH$.
With the symbol $\{e_i\}_{1\leq i\leq n}$ we indicate the canonical orthonormal basis
of $\C^n$, with $\{e_{ij}\}_{1\leq i,j\leq n}$ the standard basis of $M_n ( \C )$
($e_{ij}$ is the matrix with $1$ in position $(i,j)$ and zero everywhere else), and
with $\mathbb{I}_n$ the $n\times n$ identity matrix.


\section{Compact quantum groups and spectral triples}\label{sec:cqg}

\subsection{Some generalities on compact quantum groups}\label{sec:cqgA}

We begin by recalling the definition of compact quantum groups and their coactions \cite{Wor87,Wor95}.

\begin{df}
A \emph{compact quantum group} (to be denoted by CQG from now on) is a pair $(Q,\Delta)$ given by a complex unital \mbox{$C^*$-algebra}
$Q$ and a unital $C^*$-homomorphism $\Delta:Q\to Q\otimes Q$ such that:
i)
$\Delta$ is coassociative, i.e.~
$
(\Delta\otimes\id)\circ\Delta=(\id\otimes\Delta)\circ\Delta
$
as equality of maps $Q\to Q\otimes Q\otimes Q$;
ii)
$\mathrm{Span}\bigl\{(a\otimes 1_Q)\Delta(b)\,\big|\,a,b\!\in\! Q\bigr\}$ and
$\,\mathrm{Span}\bigl\{(1_Q\otimes a)\Delta(b)\,\big|\,a,b\!\in\! Q\bigr\}$
are norm-dense in $Q\otimes Q$.
\end{df}

\noindent
For $Q=C(G)$, where $G$ is a compact topological group,
conditions i) and ii) correspond to the associativity and the cancellation
property of the product in $G$, respectively.

\begin{df}
A \emph{unitary corepresentation} of a compact quantum group $(Q,\Delta) $ on a Hilbert space $\HH$ is a unitary element $U\in\M(\K(\HH)\otimes Q)$
satisfying
$
\,(\id\otimes \Delta ) U = U_{(12)} U_{(13)} \,,
$
where we use the standard leg numbering notation (see e.g.~\cite{MD98}).
The corepresentation $U$ is \emph{faithful} if there is no proper $C^*$-subalgebra $Q'$ of $Q$ such that $U \in\M(\K(\HH)\otimes Q')$.
\end{df}

\noindent
If $Q=C(G)$, $U$ corresponds to a strongly continuous unitary representation of $G$.

For any compact quantum group $Q$ (see \cite{Wor87,Wor95}), there always exists a canonical dense \mbox{$*$-subalgebra} $Q_0\subset Q$ which is spanned by  the matrix coefficients of the finite dimensional unitary corepresentations of $Q$ and two maps
$\epsilon : Q_0 \to \C$ (counit) and  $\kappa : Q_0 \to Q_0$ (antipode) which make $Q_0$ a Hopf $*$-algebra.

\begin{df}
A \emph{Woronowicz $C^*$-ideal} of a CQG $(Q,\Delta)$ is a $C^*$-ideal $I$ of $Q$ such that $\Delta(I)\subset\ker(\pi_I\otimes\pi_I)$, where $\pi_I:Q\to Q/I$ is the quotient map.
The quotient $Q/I$ is a CQG with the induced coproduct.
\end{df}

If $Q=C(G)$ are continuous functions on a compact topological group $G$, closed subgroups of $G$ correspond to the quotients of $Q$ by its Woronowicz $C^*$-ideals. While quotients $Q/I$ give ``compact quantum subgroups'', $C^*$-subalgebras $Q'\subset Q$ such that $\Delta(Q')\subset Q'\otimes Q'$ describe ``quotient quantum groups''.

\begin{df}\label{def:alpha}
We say that a CQG $(Q,\Delta)$ coacts on a unital $C^*$-algebra $\A$
if there is a unital $C^*$-homomorphism (called a \emph{coaction})
$\alpha:\A\to\A\otimes Q$ such that:
i)
$(\alpha \otimes\id) \alpha=(\id\otimes \Delta) \alpha$;
ii)
$\mathrm{Span}\bigl\{\alpha(a)(1_\A \otimes b)
\,\big|\,a\in\A,\,b\in Q\bigr\}$ is norm-dense in $\A\otimes Q$.

The coaction is \emph{faithful} if any CQG $Q'\subset Q$ coacting on $\A$
coincides with $Q$.
\end{df}

\noindent
It is well known (cf.~\cite{Pod95,Wan98a}) that condition (ii) in Def.~\ref{def:alpha} is equivalent to the existence of a norm-dense unital $*$-subalgebra $\A_0$ of $\A$ such that the map $\alpha$, restricted to $\A_0$, gives a coaction of the Hopf algebra $Q_0$, that is to say: $\alpha(\A_0) \subset \A_0 \otimes_{\mathrm{alg}}Q_0$ and $(\id\otimes \epsilon)\alpha=id$ on $\A_0$.

For later use, let us now recall the concept of certain universal CQGs
defined in
\cite{DW96,Wan98b} and references therein.

\begin{df}\label{def:WangA}
For a fixed $n \times n$ positive invertible matrix $R$, $A_u(n,R)$ is the
universal $C^*$-algebra generated
by $\{u_{ij},\,i,j=1,\ldots,n\}$ such that
$$
uu^*=u^*u=\mathbb{I}_n\;,\qquad
u^t(R\hspace{1pt}\bar uR^{-1})=(R\hspace{1pt}\bar uR^{-1})u^t=\mathbb{I}_n\,,
$$
where $u:=((u_{ij}))$, $u^*: = (( u^*_{ji} )) $ and
$\,\bar u:=(u^*)^t$.
It is equipped with the 'matrix' coproduct $\Delta$ given on the generators
by
$$
\Delta(u_{ij})=\sum\nolimits_k u_{ik} \otimes u_{kj} \;.
$$
\end{df}
\noindent
Note that $u$ is a unitary corepresentation of $A_u(n,R)$ on $\C^n$.

The $A_u(n,R)$'s are universal in the sense that every compact \emph{matrix}
quantum group (i.e.~every CQG generated by the matrix entries of a
finite-dimensional
unitary corepresentation) is a quantum subgroup of $A_u(n,R)$ for some
$R>0,~n>0$ \cite{Wan98b}; in particular, the well-known quantum unitary
group $SU_q(n)$
is a quantum subgroup of some $A_u(n,R)$ (cf.~Sec.~\ref{rem:sun}).
It may also be noted that $A_u(n,R)$ is the universal object in the category
of CQGs which admit a unitary corepresentation on $\C^n$ such that the
adjoint coaction on the finite-dimensional $C^*$-algebra $M_n(\C)$ preserves
 the functional $M_n(\C)\ni m \mapsto \mathrm{Tr}(R^t m)$ (see
\cite{Wan99}).

More generally, for any invertible matrix $ F, $ an analogous construction can be done. 

\begin{df}[\cite{Ban96,Ban97}]\label{def:AoF}
Let $F\in GL_n(\C)$. A CQG denoted $A_u(n, F)$ is defined as the universal
$C^*$-algebra generated by $u=\{u_{ij}, \,i,j=1,\ldots,n\}$ with the condition
that both $u$ and $u'=F\hspace{1pt}\bar uF^{-1}$ are unitary;
equipped with the standard 'matrix' coproduct.
A quantum subgroup of $A_u(n, F)$, denoted by $A_o(n, F)$, is defined by
the additional relation $u=u'$.
\end{df}

One immediately realizes that $u'u'^*=F\hspace{1pt}\bar u
(F^*F)^{-1}u^tF^*=\mathbb{I}_n$ if and only if
$R\hspace{1pt}\bar u R^{-1}u^t=\mathbb{I}_n$ and
$u'^*u'=(F^*)^{-1}u^tF^*F\hspace{1pt}\bar u F^{-1}=\mathbb{I}_n$ if and only if
$u^tR\hspace{1pt}\bar u R^{-1}=\mathbb{I}_n$, where $R=F^*F$.
Thus $A_u(n, F)$ actually depend only on the modulus of $F$ and is isomorphic
to $A_u(n,R)$ for $R=F^*F$. Thus,  $A_o(n,F)$ is also a quantum subgroup of $A_u(n,R)$
for $R=F^*F$.

\smallskip
Since we will need both the quantum groups mentioned above, for clarity, we will use the symbol $ A_u (n,  F ) $ or $ A_o (n, F ) $ when $ F $ need not be a positive matrix and use $ R $ when it is positive.
Concerning the notation for free quantum orthogonal groups, we follow here that of \cite{Ban96},
which corresponds to $B_u(Q)$ in  \cite{Wan98b} for $Q=F^*$.
We refer to \cite{Wan98b} for a detailed discussion on the structure and
classification of such quantum groups.

We remark that the CQGs $A_u(n):=A_u(n,\mathbb{I}_n)$ and $A_o(n):=A_o(n,\mathbb{I}_n)$
are called the \emph{free quantum unitary group} and \emph{free quantum orthogonal group,
respectively}, as their quotient by the commutator ideal is respectively $C(U(n))$ and
$C(O(n))$.

\begin{rem}\label{rem:sp}
Let $n=2m$ be even and $F=\sigma_2\otimes \mathbb{I}_m$, where\vspace{-4pt}
\begin{equation}\label{eq:Pauli}
\sigma_2=
\bigg(\!\begin{array}{rr}
0 & \!-i \\ i & 0
\end{array}\!\bigg)\vspace{-4pt}
\end{equation}
is the second Pauli matrix and we identify $M_{2m}(\C)$ with $M_2(\C)\otimes M_m(\C)$.
In this case, the CQG $A_o(2m,F)$ will be denoted
$A_{sp}(m)$ and
it is a free version of the symplectic group $Sp(m)$
(the group of unitary elements of $M_m(\Q)$), that can be obtained as the
quotient of $A_{sp}(m)$
by the commutator subalgebra (cf.~Sec.~\ref{sec:3.5}).
We will see in Sec.~\ref{sec:3.5} that $A_{sp}(m)$ is the quantum unitary
group of $M_m(\Q)$.
The identification of $A_{sp}(m)$ with $A_o(2m,F)$ for a special $F$ was
pointed out to us by T.~Banica.
\end{rem}

A matrix $B$ (with entries in a unital $*$-algebra $\B$) such that
both $B$ and $B^t$ are unitary is called a \emph{biunitary}
\cite{BV09}. We will also need the following class of CQGs:

\begin{df}
For a fixed $n$, we call $A_u^*(n)$ the universal unital $C^*$-algebra generated
by an $n\times n$ biunitary $u=((u_{ij}))$ with relations
\begin{equation}\label{eq:fine}
ab^*c=cb^*a \;,\qquad\forall\;a,b,c\in\{ u_{ij},\,i,j=1,\ldots,n\} \;.
\end{equation}
$A_u^*(n)$ is a CQG with coproduct given by $\Delta(u_{ij})=\sum\nolimits_k u_{ik} \otimes u_{kj}$.
\end{df}

We will call $A_u^*(n)$ the $N$-dimensional \emph{half-liberated unitary group}.
This is similar to the half-liberated orthogonal group $A_o^*(n)$,
that can be obtained by imposing the further relation $a=a^*$ for all $a
\in\{ u_{ij},\,i,j,=1,\ldots,n\}$ (cf.~\cite{BV09}).

The analogue of projective unitary groups was introduced in \cite{Ban97}
(see also Sec.~3 of \cite{BV09}). Let us recall the definition.

\begin{df}\label{def:Ban}
Let $Q$ be a CQG which is generated by the matrix elements of a unitary corepresentation $U$.
The projective version $PQ$ of $Q$ is the Woronowicz $C^*$-subalgebra of $Q$ generated by the entries of $U\ot\bar{U}$(cf.~section 3 of \cite{BV09}).
In particular, $PA_u(n)$ is the $C^*$-subalgebra of $A_u(n)$ generated
by $\{u_{ij}(u_{kl})^*:i,j,k,l=1,\ldots,n\}$.
\end{df}

In \cite{Wan98a}, Wang defines the quantum automorphism group of $M_n(\C)$,
denoted by $A_{\mathrm{aut}}(M_n(\C))$ to be the universal object in the
category of CQGs with a coaction on $M_n(\C)$ preserving the trace
(and with morphisms given by CQGs homomorphisms intertwining the
coactions). The explicit definition is in Theorem 4.1 of \cite{Wan98a}.
We conclude this section by the following proposition stating
Theorem 1(iv) from \cite{Ban97} (cf.~also Prop.~3.1(3) of \cite{BV09})
and a very special case (namely, $q=1$) of Theorem 1.1 from \cite{Sol10}
together.

\begin{prop}
\label{thm:Ban}
We have $PA_u(n)\simeq PA_o(n)\simeq A_{\mathrm{aut}}(M_n(\C))$ and $ A_{\mathrm{aut}}(M_2(\C)) \simeq C ( SO ( 3 ) ). $ Thus, $ PA_u ( 2 ) \simeq PA_o ( 2 ) \simeq C ( SO ( 3 ) ). $
\end{prop}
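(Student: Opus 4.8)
The statement decomposes into two logically independent facts, which I would prove separately and then combine. The first is the chain $PA_u(n)\simeq PA_o(n)\simeq A_{\mathrm{aut}}(M_n(\C))$ for arbitrary $n$; the second is the low-dimensional collapse $A_{\mathrm{aut}}(M_2(\C))\simeq C(SO(3))$. The plan is to realize $A_{\mathrm{aut}}(M_n(\C))$ as the common projective version and then to treat $n=2$ as a special case.

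For the first fact I would start from the fundamental corepresentation $U=((u_{ij}))$ of $A_u(n)$ on $\C^n$ and form the adjoint coaction $\alpha_u$ on $M_n(\C)=\K(\C^n)$, given by $\alpha_u(m)=U(m\ot 1)U^*$, i.e.~$\alpha_u(e_{ij})=\sum_{k,l}e_{kl}\ot u_{ki}u_{lj}^*$. Unitarity of $U$ makes $\alpha_u$ a unital $*$-homomorphism and the corepresentation identity makes it a coaction; moreover $(\tr\ot\id)\alpha_u=\tr(\cdot)\,1$, since $\sum_k u_{ki}u_{kj}^*=\delta_{ij}$ by the defining relation $u^t\bar u=\mathbb{I}_n$, so the trace $m\mapsto\tr(m)$ is preserved. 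Its matrix coefficients $u_{ki}u_{lj}^*$ are exactly the generators of $PA_u(n)$ (Def.~\ref{def:Ban}), so $\alpha_u$ restricts to a trace-preserving coaction of $PA_u(n)$ whose coefficients generate $PA_u(n)$; the universal property of $A_{\mathrm{aut}}(M_n(\C))$ then yields a surjective CQG morphism $A_{\mathrm{aut}}(M_n(\C))\to PA_u(n)$. The identical construction with the self-conjugate fundamental corepresentation $V$ of $A_o(n)$ produces a surjection $A_{\mathrm{aut}}(M_n(\C))\to PA_o(n)$. To upgrade these surjections to isomorphisms and to identify the two projective versions, I would pass to representation categories and invoke Tannaka--Krein (Woronowicz) reconstruction: CQGs generated by a corepresentation with identical spaces of intertwiners are isomorphic. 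By Def.~\ref{def:Ban} the category of $PQ$ is the projective subcategory generated by $U\ot\bar U$; for both $A_u(n)$ and $A_o(n)$ this even part is the same Temperley--Lieb-type category with parameter $n$ (for $A_o(n)$ one uses $V\simeq\bar V$), and it coincides with the intertwiner category of the defining coaction of $A_{\mathrm{aut}}(M_n(\C))$. Matching these three categories gives $PA_u(n)\simeq PA_o(n)\simeq A_{\mathrm{aut}}(M_n(\C))$, which is the content of \cite{Ban97,BV09}.

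For $n=2$ I would first note that every automorphism of $M_2(\C)$ is inner, so the classical automorphism group is $PU(2)\simeq SU(2)/\{\pm1\}\simeq SO(3)$, whence $C(SO(3))$ is always a quotient of $A_{\mathrm{aut}}(M_2(\C))$. The nontrivial assertion is that in dimension $2$ there are no genuine quantum symmetries, i.e.~$A_{\mathrm{aut}}(M_2(\C))$ is already commutative and therefore equals this classical quotient. I would obtain this from Soltan's explicit computation (Theorem~1.1 of \cite{Sol10} specialized to $q=1$): writing out the generators of the universal trace-preserving coaction together with the relations forced by the homomorphism property and trace invariance, one verifies that in dimension $2$ these relations force commutativity. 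The isomorphisms of the previous paragraph then give $PA_u(2)\simeq PA_o(2)\simeq A_{\mathrm{aut}}(M_2(\C))\simeq C(SO(3))$.

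The step I expect to be genuinely hard is the $n=2$ collapse to a commutative algebra. Unlike the categorical identification, which is a clean application of Tannaka--Krein once the intertwiner spaces are matched, the commutativity in dimension $2$ is a special low-dimensional phenomenon (it fails for $n\ge 3$, where quantum permutation-type symmetries appear) and seems to require the explicit relation analysis carried out in \cite{Sol10}; I would therefore invoke that result rather than attempt to reprove it.
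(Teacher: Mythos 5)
Your proposal is correct and matches the paper's treatment: the paper offers no proof of its own but simply combines Theorem 1(iv) of Banica \cite{Ban97} (cf.~Prop.~3.1(3) of \cite{BV09}) with the $q=1$ case of Theorem 1.1 of \cite{Sol10}, which are exactly the two results you defer to for the categorical identification $PA_u(n)\simeq PA_o(n)\simeq A_{\mathrm{aut}}(M_n(\C))$ and for the $n=2$ commutativity collapse, respectively. The surrounding scaffolding you supply (the adjoint coaction, its trace-preservation via $u^t\bar u=\mathbb{I}_n$, and the resulting surjection from $A_{\mathrm{aut}}(M_n(\C))$ onto the projective versions) is accurate and consistent with how those cited results are established.
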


\subsection{Relation between free unitary groups and $SU_q(n)$}\label{rem:sun}

In this section, we discuss the relation between the quantum unitary groups $A_u(n,R)$
and the quantum groups $SU_q(n)$ of \cite{FRTa,VS91,woro_tannaka}.
By the universality property, clearly $SU_q(n)$ is a quantum subgroup of 
$A_u(n',R)$ for a suitable $n'$ and $R$.
Our aim is pointing out explicitly how $SU_q(n)$ ``lies'' inside the
free quantum orthogonal group $A_u(n,R)$.

For $0<q\leq 1,$ we recall the definition of $ SU_q ( n ) $ following the notations of \cite[Sec.~9.2]{KS97}, except the fact that we will use $u_{ij}$ instead of $u^i_j$ to denote the matrix element of $u$ on the row $i$ and column $j$.
The CQG is generated by the matrix elements of an $n$-dimensional corepresentation $u=(u_{ij})$,
$i,j=1,\ldots,n$, with commutation relations
\begin{align*}
u_{ik}u_{jk} &=qu_{jk}u_{ik} &
u_{ki}u_{kj} &=qu_{kj}u_{ki} &&
\forall\;i<j\;, \\
[u_{il},u_{jk}]&=0 &
[u_{ik},u_{jl}]&=(q-q^{-1})u_{il}u_{jk} &&
\forall\;i<j,\;k<l\;,
\end{align*}
and with determinant relation
$$
\mc{D}_q=\sum\nolimits_{p\in S_n}(-q)^{||p||}
u_{1,p(1)}u_{2,p(2)}\ldots u_{n,p(n)}=1 \;,
$$
where the sum is over all permutations $p$ of the set
$\{1,2,\ldots,n\}$ and $||p||$ is the number of inversions
in $p$. The $*$-structure is given by
$$
(u_{ij})^*=(-q)^{j-i}\sum\nolimits_{p\in S_{n-1}}(-q)^{||p||}u_{k_1,p(l_1)}
u_{k_2,p(l_2)}\ldots u_{k_{n-1},p(l_{n-1})}
$$
with $\{k_1,\ldots,k_{n-1}\}=\{1,\ldots,n\}\smallsetminus\{i\}$,
$\{l_1,\ldots,l_{n-1}\}=\{1,\ldots,n\}\smallsetminus\{j\}$
(as ordered sets) and the sum is over all permutations $p$ of the set
$\{l_1,\ldots,l_{n-1}\}$.

From the defining relations, one derives the following `orthogonality' relations between rows
resp.~columns of $u$.
For all $a,b=1,\ldots,n$ we have:
\begin{align}
\sum\nolimits_iu_{ai}(u_{bi})^* &=\delta_{a,b} \;, &
\sum\nolimits_i(u_{ia})^*u_{ib} &=\delta_{a,b} \;, \label{eq:uustar} \\
\sum\nolimits_iq^{2(i-b)}u_{ia}(u_{ib})^* &=\delta_{a,b} \;, &
\sum\nolimits_iq^{2(a-i)}(u_{ai})^*u_{bi} &=\delta_{a,b} \;. \label{eq:ubarut}
\end{align}
This is simply Prop.~8 of \cite[Sec.~9.2.2]{KS97}, with quantum determinant $\mc{D}_q=1$
for $SU_q(n)$, and cofactor matrix $(-q)^{k-j}A^j_k=\tilde{u}^k_j=S(u^j_k),$
defined in page 313 of \cite{KS97} related to the real structure of $SU_q(n)$ by
the formula $u^*=S(u)=\tilde{u}^t$ (cf.~Sec.~9.2.4 of \cite{KS97}, case 2).

Now, equation \eqref{eq:uustar} in matrix form is simply
the unitarity condition $uu^*=u^*u=\mathbb{I}_n$. On the other hand, if we call
\begin{equation}\label{eq:RR}
R=\frac{1}{q^{n-1}[n]_q}\,\mathrm{diag}(1,q^2,q^4,\ldots,q^{2(n-1)}) \;,
\qquad [n]_q:=\frac{q^n-q^{-n}}{q-q^{-1}} \;,
\end{equation}
then
$$
(R\hspace{1pt}\bar uR^{-1})_{ij}=q^{2(i-j)}(u_{ij})^*
$$
and \eqref{eq:ubarut} is equivalent to the conditions
$u^t(R\hspace{1pt}\bar uR^{-1})=(R\hspace{1pt}\bar uR^{-1})u^t=\mathbb{I}_n$.
This was first noticed in \cite{Wan96},
cf.~eq.~(9) at page 675, and proves that $SU_q(n)$ is a quantum subgroup of the free unitary group
$A_u(n,R)$, for $R$ as in \eqref{eq:RR}. Clearly $R$ is not unique, for example one can multiply $R$ for
a constant, or replace $R$ with $R^{-1}$ ($SU_q(n)$ and $SU_{q^{-1}}(n)$ are
isomorphic, for any $q\in\R^+$).

For \mbox{$n=2$}, the corresponding state \mbox{$\varphi_R(a):=\tr(R a)$} is the well known Powers state of $M_2(\C)$.
This case was already dealt with in \cite{Wan98b}, cf.~page 578,
where it is proved that $SU_q(2)$ is isomorphic to $A_o(2,F)$ for
$$
F=\bigg(\!\begin{array}{cc}
0 & \!-q^{\frac{1}{2}} \\
q^{-\frac{1}{2}}\! & 0
\end{array}\bigg) \;.
$$
Clearly, $A_o(2,F)$ is a quantum subgroup of $A_u(2,R')$ for $R'=F^*F=\mathrm{diag}(q^{-1},q)$.
We refer to \cite{Wan99} for a study of the Power's state in relation to universal quantum groups,
and to \cite{Sol10} for its relation with $SO_q(3)$.

Notice that $[n]_qR=\pi(K_{2\rho})$, where $K_{2\rho}$ is the element of the dual Hopf $*$-algebra
$\mc{U}_q(\mathfrak{su}(n))$ implementing the modular automorphism (cf.~eq.~(3.2) of \cite{DD09})
and $\pi$ is the fundamental representation described in \cite[eq.~(4.1)]{DD09}.

\subsection{Generalities on real $C^*$-algebras}
We need to recall some basic facts about real $C^*$-algebras, which we are going to need throughout the article.
For more details on real $C^*$-algebras, we refer the reader to \cite{herbert} and \cite{goodearl}.
The following result characterizes all finite dimensional real $C^*$-algebras.

\begin{prop}\label{prop:classR}
Let $\A$ be a finite dimensional real $C^*$-algebra. Then $\A\cong M_{n_1}(D_1)\oplus M_{n_2}(D_2)\oplus M_{n_3}(D_3)\oplus\ldots\oplus M_{n_k}(D_k)$ (as real $C^*$-algebras) for some positive integers $n_1,n_2,\ldots n_k,$ where for each $i=1,2,\ldots k,$ $D_i$ is either $\R$, $\C$ or $\Q.$  
\end{prop}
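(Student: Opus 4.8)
The plan is to prove this as the real analogue of the Artin--Wedderburn structure theorem, the two extra ingredients being that the $C^*$-condition forces semisimplicity and that Frobenius' theorem pins down the division algebras. First I would show that $\A$ is semisimple. The $C^*$-identity $\|a^*a\|=\|a\|^2$ gives $\|b^2\|=\|b\|^2$ for every self-adjoint $b$, hence $\|b^{2^m}\|=\|b\|^{2^m}$ for all $m$, so a nilpotent self-adjoint element must vanish. If $a$ lies in the Jacobson radical $\mathrm{rad}(\A)$, then the self-adjoint element $a^*a$ lies there too; since the radical of a finite-dimensional algebra is nilpotent, $a^*a$ is nilpotent, whence $a^*a=0$ and $a=0$. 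Thus $\mathrm{rad}(\A)=0$ and $\A$ is a finite-dimensional semisimple real algebra.

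Next I would invoke Artin--Wedderburn over $\R$: a finite-dimensional semisimple real algebra is isomorphic, as a real algebra, to a finite direct sum $\bigoplus_{i=1}^k M_{n_i}(D_i)$ in which each $D_i$ is a finite-dimensional division algebra over $\R$. By Frobenius' theorem the only such division algebras, up to isomorphism, are $\R$, $\C$ and $\Q$, which already produces the claimed decomposition at the level of real algebras.

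It then remains to upgrade this purely algebraic isomorphism to an isomorphism of real $C^*$-algebras, and this is the step I expect to be the main obstacle. The idea is that the minimal central projections of $\A$ are self-adjoint (a supremum-norm argument on the commutative real $C^*$-algebra $Z(\A)$ rules out any swap of simple factors by the involution), so they decompose $\A$ orthogonally into simple $*$-ideals, each isomorphic as a real algebra to some $M_{n_i}(D_i)$. On each factor the canonical involution of $D_i$ (identity on $\R$, conjugation on $\C$ and on $\Q$) together with conjugate-transposition makes $M_{n_i}(D_i)$ a real $C^*$-algebra, and I would finish by using that a $*$-isomorphism between real $C^*$-algebras is automatically isometric and that the $C^*$-norm on a finite-dimensional semisimple $*$-algebra is unique; hence the Wedderburn isomorphism can be arranged block-by-block to be a genuine $*$-isomorphism.

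The delicate point throughout is reconciling the abstract Wedderburn decomposition with the $*$-structure, that is, checking that the involution of $\A$ really corresponds to the standard conjugate-transpose on each block and that no exotic involution occurs. A clean alternative route to control this is to complexify: $\A\otimes_\R\C$ is a finite-dimensional complex $C^*$-algebra, hence $\bigoplus_j M_{m_j}(\C)$, and $\A$ is recovered as the fixed-point algebra of the induced conjugate-linear $*$-automorphism; the blocks it fixes yield the factors $M_n(\R)$ or $M_n(\Q)$, while pairs of blocks it interchanges yield the factors $M_n(\C)$, giving the same list of summands.
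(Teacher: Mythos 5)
The paper does not actually prove this proposition: it is stated as a known classification result, with \cite{herbert} and \cite{goodearl} cited just above for background, so there is no in-paper argument to compare yours against. Your first two steps are sound: the $C^*$-identity forces $\|b^{2^m}\|=\|b\|^{2^m}$ for self-adjoint $b$ and hence kills the (nilpotent, $*$-stable) Jacobson radical, and Artin--Wedderburn together with Frobenius then gives $\A\cong\bigoplus_i M_{n_i}(D_i)$ as real algebras, with the minimal central idempotents automatically self-adjoint (if $e^*=f\neq e$ were another minimal central idempotent one would get $\|e\|^2=\|e^*e\|=\|fe\|=0$).

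The genuine gap is exactly where you locate it, but the reason deserves to be made explicit: the bare identity $\|a^*a\|=\|a\|^2$ does not determine the involution on a simple block, so no appeal to ``uniqueness of the $C^*$-norm'' can upgrade the Wedderburn isomorphism to a $*$-isomorphism. A concrete obstruction: $\C$ with the \emph{trivial} involution and the usual norm satisfies the $C^*$-identity and is simple as a real algebra, yet it is $*$-isomorphic to none of $M_1(\R)$, $M_1(\C)$ (with conjugation), $M_1(\Q)$ --- any $*$-map to $(\C,\overline{\phantom{z}})$ would satisfy $\phi(z)=\phi(z^*)=\overline{\phi(z)}$ and land in $\R$. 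What excludes such blocks is the additional axiom in the definition of a real $C^*$-algebra (invertibility of $1+a^*a$, equivalently symmetry/positivity), which your Wedderburn route never invokes. Your closing complexification argument is the correct repair, and it is the proof the paper implicitly relies on: granting that $\A_\C=\A\otimes_\R\C$ is a complex $C^*$-algebra (this is where positivity enters), one has $\A_\C\cong\bigoplus_jM_{m_j}(\C)$ and $\A$ is the fixed-point algebra of the antilinear $*$-automorphism $\sigma$ of order two; $\sigma$ either swaps two blocks (fixed points isomorphic to $M_m(\C)$) or preserves a block, where it takes the form $m\mapsto K\bar{m}K^{-1}$ with $K\bar{K}=\pm 1$, yielding $M_m(\R)$ or $M_{m/2}(\Q)$ respectively --- precisely the three cases recorded in Prop.~\ref{sigmaforrealalgebrasformulae}. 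I would promote that last paragraph to the main argument and discard the attempt to normalize the involution block-by-block from the $C^*$-identity alone.
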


\noindent
For a real $ C^* $-algebra $ \A,$ the $ \ast $-algebra $ \A_{\C} = \A \otimes_{\R} \C$ is a complex $C^*$-algebra, known as the complexification of $ \A. $ 
Moreover, $\A$ is the fixed point algebra of the antilinear automorphism
$\sigma$ on $\A_\C=\A\otimes_{\R}\C$, given by $\sigma(a\ot_{\R} z)=a\ot_{\R} \bar{z}.$
Note that $\sigma$ commutes with the involution on $\A_\C$, given by
$(a\otimes_{\R}z)^*=a^*\ot_{\R} \bar{z}$.
Throughout this article, the symbol $ \sigma $ will stand for this antilinear automorphism.

The following result recalls the complexifications and the formulas of $ \sigma $ for the finite dimensional $C^*$-algebras $ M_n ( \R ), ~ M_n ( \C ) $ and  $ M_n ( \Q ).$

\begin{prop}\label{sigmaforrealalgebrasformulae}
Let $\mc{A}:=M_n(\Bbbk),$ and $\mc{A}_\C:=\mc{A}\otimes_\R\C$. Then:\vspace{-5pt}
\begin{enumerate}\itemsep=0pt
\item
if $\Bbbk=\R,$ then $\mc{A}_\C=M_n(\C)$ and $\sigma(m)=\overline{m}$;
\item
if $\Bbbk=\C,$ then $\mc{A}_\C=M_n(\C)\oplus M_n(\C)$ and $\sigma(m_1\oplus m_2)=\overline{m_2}\oplus\overline{m_1}$;
\item
if $\Bbbk=\Q,$ then $\mc{A}_\C=M_2(\C)\otimes M_n(\C)\cong M_{2n}(\C)$ and $\sigma(m)=(\sigma_2\otimes 1_n)\overline{m}(\sigma_2\otimes 1_n)$, where
$\sigma_2$ is the matrix \eqref{eq:Pauli}.
\end{enumerate}
\end{prop}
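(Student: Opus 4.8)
The plan is to verify each of the three cases of Proposition~\ref{sigmaforrealalgebrasformulae} directly, using the standard classification isomorphisms $\R\otimes_\R\C\cong\C$, $\C\otimes_\R\C\cong\C\oplus\C$, and $\Q\otimes_\R\C\cong M_2(\C)$, and then checking what the antilinear automorphism $\sigma(a\otimes_\R z)=a\otimes_\R\bar z$ becomes under these identifications. Since $M_n(\Bbbk)\otimes_\R\C\cong M_n(\Bbbk\otimes_\R\C)$, it suffices in each case to understand the complexification of the division ring $\Bbbk$ itself together with the induced action of $\sigma$, and then tensor with $M_n(\C)$ componentwise.

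The case $\Bbbk=\R$ is essentially immediate: $M_n(\R)\otimes_\R\C\cong M_n(\C)$ by sending $m\otimes_\R z$ to $zm$, and under this map $\sigma(m\otimes_\R z)=m\otimes_\R\bar z\mapsto\bar z\,m=\overline{zm}$ because $m$ has real entries, giving $\sigma(X)=\bar X$. For $\Bbbk=\C$, I would use that $\C\otimes_\R\C\cong\C\oplus\C$ via $z\otimes_\R w\mapsto(zw,\bar z w)$ (the two maximal ideals corresponding to the two ring homomorphisms $\C\to\C$, namely identity and conjugation); this extends to $M_n(\C)\otimes_\R\C\cong M_n(\C)\oplus M_n(\C)$. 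The key point is to track $\sigma$: since $\sigma$ fixes the first tensor leg and conjugates the scalar $w$, I would compute the image of $\sigma(m\otimes_\R w)$ in the two summands and observe that it swaps the factors while conjugating, yielding $\sigma(m_1\oplus m_2)=\overline{m_2}\oplus\overline{m_1}$.

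The case $\Bbbk=\Q$ is the one requiring the most care and is where I expect the main bookkeeping obstacle. I would fix the standard embedding of the quaternions into $M_2(\C)$ sending $1,i,j,k$ to suitable $2\times2$ matrices (e.g.\ $i\mapsto i\sigma_3$, $j\mapsto i\sigma_2$, $k\mapsto i\sigma_1$ up to conventions), which realizes $\Q$ as the real subalgebra of $M_2(\C)$ fixed by the antilinear map $m\mapsto(\sigma_2\otimes 1)\,\bar m\,(\sigma_2\otimes 1)$; this is precisely the statement that $\Q\otimes_\R\C\cong M_2(\C)$ with the stated $\sigma$. The subtle step is verifying that the fixed-point condition $m=(\sigma_2)\bar m(\sigma_2)$ cuts out exactly the image of $\Q$, using $\sigma_2^2=\mathbb{I}_2$ and $\sigma_2\bar v\sigma_2=v$ for $v$ in the image of the quaternionic generators; I would check this on the basis $\{1,i,j,k\}$ and confirm that $\sigma$ is an involutive antilinear algebra automorphism. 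Tensoring with $M_n(\C)$ then gives $\Q\otimes_\R M_n \cong M_2(\C)\otimes M_n(\C)\cong M_{2n}(\C)$ with $\sigma(m)=(\sigma_2\otimes 1_n)\bar m(\sigma_2\otimes 1_n)$, as claimed. Throughout, the only real content is getting the quaternion-to-matrix conventions consistent with $\sigma_2$; the rest is routine verification that $\sigma$ is antilinear, involutive, multiplicative, and has the asserted fixed-point algebra.
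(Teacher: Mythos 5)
The paper states this proposition without proof (it is presented as a recollection of standard facts about real $C^*$-algebras, with references to the literature), so there is no argument in the text to compare against; your direct verification is correct and is the standard one. In particular, your identification $z\otimes_\R w\mapsto(zw,\bar z w)$ in the case $\Bbbk=\C$ and your fixed-point computation $\sigma_2\bar m\sigma_2=m\iff m=\bigl(\begin{smallmatrix}\alpha&\beta\\-\bar\beta&\bar\alpha\end{smallmatrix}\bigr)$ in the case $\Bbbk=\Q$ are consistent with the conventions the paper uses later (equation \eqref{eq:XXX} and the proof of Lemma \ref{lemma:m4c}).
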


\subsection{Real spectral triples}\label{sec:catCJ}

In noncommutative geometry, compact Riemannian spin manifolds are replaced by real spectral triples.
Recall that a unital spectral triple $(\A^\infty,\HH,D)$ is the datum of: a Hilbert
space $\HH$, a unital associative involutive
algebra $\A^\infty$ with a faithful unital $*$-representation $\pi:\A\to\B(\HH)$ (the representation
symbol is usually omitted) and a (not-necessarily bounded) self-adjoint operator $D$ on $\HH$ with
compact resolvent and having bounded commutators with all $a\in\A^\infty$, see e.g.~\cite{Con95,Con08}.
A spectral triple is \emph{even} if there is a $\Z_2$-grading $\gamma$ on $\HH$ commuting
with $\A^\infty$ and anticommuting with $D$; we will set $\gamma=1$ when the spectral triple is odd.
A spectral triple is \emph{real} if there is an antilinear isometry $J:\HH\to\HH$, called
the \emph{real structure}, such that
\begin{equation}\label{eq:yyy}
J^2=\epsilon 1\;,\qquad JD=\epsilon' DJ\;,\qquad
J\gamma=\epsilon''\gamma J\;,
\end{equation}
and
\begin{equation}\label{eq:real}
[a,JbJ^{-1}]=0\;,\qquad
[[D,a],JbJ^{-1}]=0\;,
\end{equation}
for all $a,b\in\A^\infty$~\footnote{%
In some examples (not in the present case) conditions \eqref{eq:real} have to be slightly relaxed, see e.g.~\cite{DLS05}.}.
$\epsilon$, $\epsilon'$ and $\epsilon''$ are signs and determine the KO-dimension of the space \cite{Con95}.

A canonical commutative example is given by $(C^\infty(M),L^2(\mc M,\mc S),\D)$ --
where $C^\infty(\mc M)$ are complex-valued smooth functions on a closed Riemannian
spin manifold, $L^2(\mc M,\mc S)$ is the Hilbert space of square integrable
spinors and $\D$ is the Dirac operator. This spectral triple is even if $\mc M$ is even-dimensional. In fact, from any commutative real spectral triple
it is possible to reconstruct a closed Riemannian spin manifold. We refer to \cite{Con08} for the exact statement.

While we always tacitly assume that $\HH$ is a complex Hilbert space,
we allow the possibility that $\A^\infty$ is a real $*$-algebra.
Note that to any real spectral triple 
$(\A^\infty,\HH,D,\gamma,J)$ over a real $*$-algebra $\A^\infty$, we can associate a real spectral triple $(\B^\infty,\HH,D,\gamma,J)$ over a complex $*$-algebra $\B^\infty$, as shown in Lemma 3.1 of \cite{ludwik}.
We let $\B^\infty$ be the quotient $\A^\infty_{\C}/\ker\pi_{\C}$, where
$\A^\infty_{\C}\simeq\A^\infty\otimes_{\R}\C$ is the complexification of $\A^\infty$,
with conjugation defined by $ ( a \otimes_{\R} z )^* = a^* \otimes_{\R} \bar{z}  $ for $ a \in \A^\infty $ and $ z \in \C$,
and $\pi_{\C}:\A^\infty_{\C}\to\B(\HH)$ is the $*$-representation
\begin{equation}\label{eq:piC}
\pi_{\C}(a\otimes_{\R}z)=z\pi(a) \;,\qquad a\in\A^\infty\,,\;z\in\C\,.
\end{equation}
It was observed in \cite{ludwik} that $\ker\pi_{\C} $ may be nontrivial since
the representation $\pi_{\C}$ is not always faithful. For example, if
$\A^\infty$ is itself a complex $*$-algebra (every complex $*$-algebra is also a
real $*$-algebra) and $\pi$ is complex linear, then for any $a\in\A^\infty$ the element
$a\otimes_{\R}1+ia\otimes_{\R}i\,$ of $\A^\infty_{\C}$
is in the kernel of $\pi_{\C}$. In fact, if $\A^\infty$ is a complex algebra,
$$
\B^\infty\simeq\A^\infty \;.
$$

We close this section with a remark. 
While usually $\A^\infty$ is only a pre-$C^*$-algebra for the operator
norm, in the finite-dimensional case it is a $C^*$-algebra, and to make this fact more evident it
will be denoted by $\A$, without the $\infty$ supscript.

\section{Quantum unitary group of a finite-dimensional $C^*$-algebra}\label{sec:3}

\subsection{The case of complex $C^*$-algebras}\label{sec:3.1}

Let $\A$ be a finite-dimensional complex $C^*$-algebra, that is,
\begin{equation}\label{eq:complexCstar}
\A=\bigoplus_{i=1}^m M_{n_i}(\C)
\end{equation}
for some positive integers $m$ and $n_i$.
For $a=a_1\oplus \ldots \oplus a_m\in\A$, we denote
by $\tr$ the trace map:
$$
\tr(a):=\sum_{i=1}^m\sum_{k=1}^{n_i}(a_i)_{kk} \;.
$$
Any faithful state of $\A$ is of the form $ \tr( R\,\cdot\, ) $ for some positive invertible operator $R:=\oplus_i R_i\in \A$ with normalization $\tr(R)=1$, called the \emph{density matrix} of the state. Since in the following,
the normalization of $R$ is irrelevant, in the particular case when $R=\frac{1}{\tr(\mathbb{I})}\mathbb{I}$ is 
a scalar multiple of the identity, one can equivalently use the map $\tr( \,\cdot\, ) .$
Let $ \pi_R: \A \rightarrow B ( L^2 ( A,  \tr( R\,\cdot\, ) ) $ be the GNS representation of a finite dimensional complex $ C^*  $-algebra $ \A $ with respect to the faithful state $\tr( R\,\cdot\, ) $ as above. We define the functional 
\begin{equation}\label{eq:varphiR}
\varphi_R(\pi_R ( a ) )=\tr( R a ) \;,
\end{equation}
The above functional is well defined since the GNS representation of a $ C^* $-algebra with respect to a faithful state is faithful. Throughout this article, the symbol $  \varphi_R$ will stand for this functional. 

We start by stating the following Lemma, which gives a characterization of the unitary group of a finite dimensional complex $C^*$-algebra.

\begin{lemma}
Let $\A$ be a finite dimensional complex $C^*$-algebra, viewed as a subalgebra of
$\B ( L^2 ( \A, \tr ) ) $ via the GNS representation $\pi$, and denote by
$\pi_U = \pi|_{U(\A)}$ its restriction to the group $U(\A)$ of unitary elements of $\A$.
Then $(U ( \A ), \pi_U )$ is the universal (final) object in the category
whose objects are pairs $(G,\tilde{\pi})$, with $G$ a compact group and
$\tilde{\pi}$ a unitary representation of $G$ on $L^2(\A,\tr)$ satisfying
$\tilde{\pi}(g)\in\A$ for all $g\in G$, and whose morphisms are continuous group homomorphisms intertwining the representations.
\end{lemma}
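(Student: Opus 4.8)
The plan is to establish the universal property of a terminal (final) object directly: for each object $(G,\tilde{\pi})$ of the category I must exhibit a \emph{unique} morphism $(G,\tilde{\pi})\to(U(\A),\pi_U)$, a morphism being a continuous group homomorphism $\phi:G\to U(\A)$ satisfying the intertwining relation $\pi_U\circ\phi=\tilde{\pi}$.

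First I would check that $(U(\A),\pi_U)$ genuinely belongs to the category. The group $U(\A)$ is compact, being the closed, norm-bounded set of unitaries in the finite-dimensional algebra $\A$; the map $\pi_U$ is a unitary representation because $\pi$ is a unital $*$-homomorphism; and $\pi_U(u)=\pi(u)\in\pi(\A)=\A$ by construction. The decisive structural input is that the GNS representation $\pi$ attached to the faithful state $\tr$ is faithful, so $\pi$ is a $*$-isomorphism of $\A$ onto its image in $\B(L^2(\A,\tr))$; in particular $\pi_U$ is injective and maps $U(\A)$ bijectively onto the set of unitary operators lying in $\pi(\A)$.

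Given an arbitrary object $(G,\tilde{\pi})$, the defining conditions force $\tilde{\pi}(g)$ to be a unitary operator lying in $\A=\pi(\A)$, hence $\tilde{\pi}(g)\in\pi_U(U(\A))$ for every $g$. I would therefore \emph{define} $\phi(g):=\pi_U^{-1}(\tilde{\pi}(g))$, the unique element of $U(\A)$ with $\pi(\phi(g))=\tilde{\pi}(g)$. The intertwining relation then holds by construction, and $\phi$ is a group homomorphism because injectivity of $\pi$ applied to
$$
\pi(\phi(g)\phi(h))=\pi(\phi(g))\pi(\phi(h))=\tilde{\pi}(g)\tilde{\pi}(h)=\tilde{\pi}(gh)=\pi(\phi(gh))
$$
yields $\phi(g)\phi(h)=\phi(gh)$. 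Uniqueness is immediate: any competing morphism $\psi$ satisfies $\pi_U\circ\psi=\tilde{\pi}=\pi_U\circ\phi$, so $\psi=\phi$ since $\pi_U$ is injective.

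The only step demanding a little care---and the one I would flag as the main (though mild) obstacle---is the continuity of $\phi$. Here finite-dimensionality does all the work: $L^2(\A,\tr)$ is finite-dimensional, so all operator topologies coincide and the continuous injection $\pi_U$ from the compact group $U(\A)$ into $\B(L^2(\A,\tr))$ is automatically a homeomorphism onto its image (a continuous bijection from a compact space to a Hausdorff space). Since $\tilde{\pi}$ is continuous and takes values in that image, $\phi=\pi_U^{-1}\circ\tilde{\pi}$ is a composition of continuous maps and hence continuous, completing the proof that $(U(\A),\pi_U)$ is final.
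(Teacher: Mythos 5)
Your proposal is correct and follows essentially the same route as the paper's own (much terser) proof: both verify that $(U(\A),\pi_U)$ is an object of the category and then define the unique morphism as $\phi=\pi_U^{-1}\circ\tilde{\pi}$, relying on faithfulness of the GNS representation of the faithful state $\tr$. Your additional checks (that $\phi$ is a homomorphism and continuous, via finite-dimensionality and the compact-to-Hausdorff argument) merely fill in details the paper leaves implicit.
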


\begin{proof}
Clearly $(U ( \A ), \pi_U ) $ is an object in the category (as a linear space $L^2 ( \A, \tr )\simeq \A$ since the normalized trace is a faithful state, and then
$\pi$ is a faithful representation).
Moreover, if $(G,\tilde{\pi})$ is any object in the category,
since $\pi_U$ is faithful there exists a unique morphism $\phi:G\rightarrow U(\A)$
intertwining the representations, which is defined by $\phi(g)=(\pi_U)^{-1}\widetilde{\pi}(g)$
for all $g\in G$.
This shows the universality of $(U(\A),\pi_U)$.
\end{proof}

\noindent
We define a notion of quantum family of unitaries by taking a suitable noncommutative analogue of this characterization.
Notice that while $U(\A)$ is a \emph{final} object in the category described above, since the functor $C$ is contravariant, the $C^*$-algebra $C(U(\A))$ is a \emph{initial} object in the dual category.

\begin{df}\label{def:QUG}
Let $\A$ be a finite-dimensional complex $C^*$-algebra, $R\in\A$ a positive invertible operator,
$\varphi_R$ as in \eqref{eq:varphiR}, and let $\pi_R:\A\to\B\big(L^2(\A,\varphi_R)\big)$ be the
associated GNS representation.
We denote by $\mathbf{C_u}(\A,R)$ the category whose objects are pairs $(Q,U)$, 
with $Q$ a unital $C^*$-algebra and $U$ a unitary element in
$\pi_R(\A)\otimes Q$ such that:
\begin{enumerate}\itemsep=0pt
\item[(i)]  $\mathrm{Ad}_U=U(\,\cdot\,\otimes 1_Q)U^*$ preserves the state $\varphi_R$ on $\pi_R(\A)$,
\item[(ii)] $\mathrm{Ad}_{U^*}=U^*(\,\cdot\,\otimes 1_Q)U$ preserves the state $\varphi_{R^{-1}}$ on $\pi_R(\A)$,
\end{enumerate}
A morphism $\phi:(Q,U)\to (Q',U')$ is a $C^*$-homomorphisms such that $(\id\otimes\phi)(U)=U'$.

We call $\mathbf{C_u}(\A,R)$ the category of \emph{quantum families of $R$-unitaries} of $\A$.
\end{df}

\begin{rem}\label{rem:3.3}
Notice that condition (i) is equivalent to the condition that $U$ not only preserves
the inner product $\inner{a,b}_R=\varphi_R(a^*b)$ of the GNS representation (that follows from $U^*U=1$),
but also the sesquilinear form $(a,b)_R=\varphi_R(ab^*)$. If we consider the
subcategory whose objects $(Q,U)$ are compact matrix quantum groups, condition (ii) can be derived from (i) using the
properties of the antipode.
\end{rem}

\noindent
To start with, we will prove that the universal (initial) object in the category $\mathbf{C_u}(M_n(\C),R)$ exists and is in fact $A_u(n,R^t)$.
Using this result we will prove that for any finite-dimensional complex $C^*$-algebra $\A$,
 $\mathbf{C_u}(\A,R)$ has a universal object which is in fact a CQG.
We will call this CQG the \emph{quantum $R$-unitary group} of $\A$ and denote it by the symbol $Q_u( \A, R )$.

\begin{prop}\label{prop:QUGofMnC}
The universal object in the category $\mathbf{C_u}(M_n(\C),R)$ exists and it is isomorphic to
$(A_u(n,R^t),U_n)$, where $U_n$ is the faithful unitary corepresentation defined by
\begin{equation}\label{eq:UMnC}
U_n=\sum\nolimits_{i,j=1}^n\pi_R(e_{ij})\otimes u_{ij} \;,
\end{equation}
$u_{ij}$ are the canonical generators of $A_u(n,R^t)$ and $\pi_R:M_n(\C)\to 
\B\big(L^2(M_n(\C)),\varphi_R)\big)$ is the GNS representation.
\end{prop}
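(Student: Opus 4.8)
The plan is to exploit the identification of $\pi_R(M_n(\C))\otimes Q$ with the matrix algebra $M_n(Q)$. Since $\varphi_R$ is a faithful state, the GNS representation $\pi_R$ is faithful and $\{\pi_R(e_{ij})\}_{i,j=1}^n$ is a system of matrix units for $\pi_R(M_n(\C))\cong M_n(\C)$; hence every unitary $U\in\pi_R(M_n(\C))\otimes Q$ is written uniquely as $U=\sum_{i,j}\pi_R(e_{ij})\otimes v_{ij}$, and $U$ is unitary if and only if the matrix $v=((v_{ij}))\in M_n(Q)$ satisfies $vv^*=v^*v=\mathbb{I}_n$. The whole statement then reduces to one equivalence: for such a $U$, conditions (i) and (ii) of Definition~\ref{def:QUG} are equivalent to the remaining defining relations of $A_u(n,R^t)$, namely $v^t\big(R^t\bar v(R^t)^{-1}\big)=\big(R^t\bar v(R^t)^{-1}\big)v^t=\mathbb{I}_n$, where $\bar v=((v_{ij}^*))$. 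Granting this, the proposition follows from the universal property of the generators-and-relations $C^*$-algebra $A_u(n,R^t)$.

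The computational core is to translate the two state-preservation conditions into relations among the $v_{ij}$. Expanding $\mathrm{Ad}_U$ on a matrix unit gives
\[
\mathrm{Ad}_U(\pi_R(e_{kl})\otimes 1_Q)=U(\pi_R(e_{kl})\otimes 1_Q)U^*=\sum_{i,q}\pi_R(e_{iq})\otimes v_{ik}v_{ql}^*,
\]
and applying $\varphi_R\otimes\id$, with $\varphi_R(\pi_R(e_{iq}))=\tr(Re_{iq})=R_{qi}$, turns condition (i) into
\[
\sum_{i,q}R_{qi}\,v_{ik}v_{ql}^*=R_{lk}\,1_Q,\qquad\forall\,k,l.
\]
A short reindexing, using that the scalars $R_{ab}$ commute with everything and multiplying through by $R^{-1}$, shows that this is exactly $v^t\big(R^t\bar v(R^t)^{-1}\big)=\mathbb{I}_n$. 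The analogous computation for $\mathrm{Ad}_{U^*}$, paired now with $\varphi_{R^{-1}}$, yields $\sum_{p,j}(R^{-1})_{jp}\,v_{kp}^*v_{lj}=(R^{-1})_{lk}\,1_Q$, which is precisely the opposite one-sided relation $\big(R^t\bar v(R^t)^{-1}\big)v^t=\mathbb{I}_n$. Since every manipulation only multiplies by the invertible scalar matrices $R$ and $R^{-1}$, each step is reversible, so the equivalence holds in both directions.

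Granting the equivalence, the proof concludes in two steps. First, $(A_u(n,R^t),U_n)$ is an object of $\mathbf{C_u}(M_n(\C),R)$: the generators $u_{ij}$ satisfy $uu^*=u^*u=\mathbb{I}_n$ and $u^t\big(R^t\bar u(R^t)^{-1}\big)=\big(R^t\bar u(R^t)^{-1}\big)u^t=\mathbb{I}_n$ by definition, so $U_n$ is unitary and conditions (i), (ii) hold by the equivalence just established; moreover $U_n$ is faithful because its entries $u_{ij}$ generate $A_u(n,R^t)$. Second, for universality, let $(Q,U)$ be an arbitrary object and write $U=\sum_{i,j}\pi_R(e_{ij})\otimes v_{ij}$; the $v_{ij}$ then satisfy exactly the defining relations of $A_u(n,R^t)$, so its universal property furnishes a unique unital $*$-homomorphism $\phi:A_u(n,R^t)\to Q$ with $\phi(u_{ij})=v_{ij}$. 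This $\phi$ satisfies $(\id\otimes\phi)(U_n)=U$, and it is the unique such morphism since the $u_{ij}$ generate $A_u(n,R^t)$. Hence $(A_u(n,R^t),U_n)$ is the universal (initial) object.

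The hard part will be the bookkeeping in the second paragraph: one must keep careful track of transposes, of the entrywise conjugate $\bar v$, and above all of the order of the noncommutative products $v_{ik}v_{ql}^*$ (as opposed to $v_{ql}^*v_{ik}$), because it is precisely this ordering that produces $R^t$ rather than $R$ and separates the two one-sided relations. It is also worth stressing that at the level of quantum families, where $Q$ carries no coproduct, conditions (i) and (ii) are genuinely independent and both are needed; the redundancy noted in Remark~\ref{rem:3.3} appears only after one restricts to compact matrix quantum groups, where the antipode relates a one-sided relation to its opposite.
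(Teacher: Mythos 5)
Your proposal is correct and follows essentially the same route as the paper: decompose $U=\sum_{i,j}\pi_R(e_{ij})\otimes v_{ij}$ using the matrix units, translate unitarity into unitarity of $v\in M_n(Q)$, translate conditions (i) and (ii) into the two one-sided relations $v^tR^t\bar v(R^t)^{-1}=\mathbb{I}_n$ and $R^t\bar v(R^t)^{-1}v^t=\mathbb{I}_n$, and conclude by the universal property of $A_u(n,R^t)$. The only cosmetic difference is that the paper realizes $L^2(M_n(\C),\varphi_R)$ explicitly via the isometry $L_R(e_{ij})=e_i\otimes (R^t)^{1/2}e_j$ so that $\pi_R(a)$ becomes $a\otimes\mathbb{I}_n$, whereas you invoke faithfulness of the GNS representation directly; the resulting index computations are identical.
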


\begin{proof}
Since $\varphi_R$ is faithful, the linear space $L^2(M_n(\C),\varphi_R)$ is simply $M_n(\C)$ with inner product $\inner{a,b}\!{_R}=\varphi_R(a^*b)$. One can easily check that the map
$$
L_R:L^2(M_n(\C),\varphi_R)\to \C^n\otimes\C^n \;,\qquad
L_R(e_{ij})=e_i\otimes (R^t)^{\frac{1}{2}}e_j \;,
$$
is an isometry. Here, the inner product on $\C^n$ is the standard one, and $\{e_i\}_{i=1}^n$
is the canonical orthonormal basis of $ C^n. $ Moreover, we have
$$
L_R\pi_R(a)L_R^*=a\otimes \mathbb{I}_n
$$
so that a matrix $a\in M_n(\C)$ acts simply by row-by-column multiplication on the first factor $\C^n$.

Thus, for any object $(Q,V)$ in $\mathbf{C_u}(\A,R)$,
$V$ is of the form
$$
V=\sum\nolimits_{i,j=1}^n\pi_R(e_{ij})\otimes v_{ij}
=(L_R^*\otimes\id)\left(
\sum\nolimits_{i,j=1}^ne_{ij}\otimes 1\otimes v_{ij} 
\right)(L_R\otimes\id)
$$
with $v_{ij}\in Q$,
and unitarity of $V$ is equivalent to unitarity of the matrix $v\in M_n(Q)$.
Since $\varphi_R(e_{ij})=R_{ji}$ and
$$
\mathrm{Ad}_V(\pi_R(e_{ij}))=(L_R^*\otimes\id)\left(\sum\nolimits_{kl}
e_{kl}\otimes 1\otimes v_{ki}v_{lj}^*\right)(L_R\otimes\id),
$$
condition (i) in Def.~\ref{def:QUG} gives
$$
(\varphi_R\otimes\id)\mathrm{Ad}_V(\pi_R(e_{ij}))=\sum_{ijkl}R_{lk}v_{ki}v_{lj}^*=
\varphi_R(\pi_R(e_{ij}))\cdot 1_Q=R_{ji}\cdot 1_Q,
$$
that is, $v^tR^t\hspace{1pt}\bar{v}=R^t$. Similarly from (ii) of Def.~\ref{def:QUG}, we get
$\bar{v}\hspace{1pt}(R^t)^{-1}\hspace{1pt}v^t=(R^t)^{-1}$. These can be rewritten as
$$
v^tR^t\hspace{1pt}\bar{v}(R^t)^{-1}=
R^t\hspace{1pt}\bar{v}(R^t)^{-1}v^t=
\mathbb{I}_n \;.
$$
This proves that $v=(v_{ij})$ generate a quantum subgroup of $A_u(n,R^t)$.
It is clear from the above discussions that: \\[2pt]
1.~$(A_u(n,R^t),U_n)$ is an object of $\mathbf{C_u}(M_n(\C),R)$, with 
$U_n$ as in \eqref{eq:UMnC} (note that $U_n$ is a unitary corepresentation of $A_u(n,R^t)$,
and it is clearly faithful); \\[2pt]
2.~there is a unique $C^*$-homomorphism $\phi:A_u(n,R^t)\to Q$ such that $(\id\otimes\phi)(U_n)=V$.
This is uniquely defined by:
$$
\phi(u_{ij})=v_{ij} \;.
$$
This proves universality of $(A_u(n,R^t),U_n)$.
\end{proof}

This is very similar to a result of \cite{Wan98a}, except that here we consider
the category of quantum families (thus need the extra condition (ii) in Def.~\ref{def:QUG}, that is automatically satisfied when considering the category of CQGs)
and work with the GNS representation of the algebra.
We now extend this result to arbitrary finite-dimensional complex $C^*$-algebra.

\begin{thm}\label{thm:QUGA}
Let $\A=\bigoplus_{i=1}^m M_{n_i}(\C)$, $R=\oplus_kR_k\in\A$ a positive invertible operator,
and denote by $U_{n_k}$ the corepresentation of $A_u(n_k,R_k^t)$ on $L^2(M_{n_k}(\C),\varphi_{R_k})$
as in \eqref{eq:UMnC}.
Then the universal object $(Q_u(\A,R),U)$ in the category $\mathbf{C_u}(\A,R)$ exists and is given by
\begin{equation}\label{eq:QUG-uob}
Q_u(\A,R)=*_{k=1}^mA_u(n_k,R_k^t) \;,\qquad
U=\oplus_kU_{n_k} \;,
\end{equation}
where ``$*_{k=1}^m$'' is the free product
and $U$ is a faithful unitary corepresentation of $Q_u(\A,R)$ on $L^2(\A,\varphi_R)=\bigoplus_kL^2(M_{n_k}(\C),\varphi_{R_k})$.
\end{thm}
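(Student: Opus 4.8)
The plan is to reduce everything block-by-block to Proposition~\ref{prop:QUGofMnC} and then glue the blocks together using the universal property of the free product of $C^*$-algebras. The starting observation is that the GNS inner product $\langle a,b\rangle=\varphi_R(a^*b)$ splits as an orthogonal direct sum over the blocks: for $a=\oplus_k a_k$ and $b=\oplus_k b_k$ one has $\varphi_R(a^*b)=\sum_k\tr(R_k a_k^*b_k)=\sum_k\langle a_k,b_k\rangle_{R_k}$, so that $L^2(\A,\varphi_R)=\bigoplus_k L^2(M_{n_k}(\C),\varphi_{R_k})$ and $\pi_R(\A)=\bigoplus_k\pi_{R_k}(M_{n_k}(\C))$ acts block-diagonally. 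The crucial structural fact, which uses that $\A$ is a finite direct sum of full matrix algebras, is that $\pi_R(\A)\ot Q=\bigoplus_k\big(\pi_{R_k}(M_{n_k}(\C))\ot Q\big)$ for every unital $C^*$-algebra $Q$; hence \emph{any} unitary $V$ in $\pi_R(\A)\ot Q$ is automatically of block-diagonal form $V=\oplus_k V_k$, with each $V_k$ a unitary in $\pi_{R_k}(M_{n_k}(\C))\ot Q$.

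First I would check that $(Q_u(\A,R),U)$ is an object of $\mathbf{C_u}(\A,R)$. The element $U=\oplus_k U_{n_k}$ is a block-diagonal unitary once each $U_{n_k}$ is viewed inside $\pi_R(\A)\ot Q_u(\A,R)$ through the canonical inclusion $A_u(n_k,R_k^t)\hookrightarrow *_j A_u(n_j,R_j^t)$. Since $\mathrm{Ad}_U$ preserves each block and $\varphi_R$ restricts to $\varphi_{R_k}$ on the $k$-th block, conditions (i) and (ii) of Definition~\ref{def:QUG} for $U$ follow at once from the corresponding conditions for each $U_{n_k}$, which hold by Proposition~\ref{prop:QUGofMnC}.

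The heart of the argument is universality. Given any object $(Q,V)$, write $V=\oplus_k V_k$ as above. The point requiring care is that conditions (i) and (ii) are stated as a single global identity on all of $\pi_R(\A)$ and must be decoupled into one condition per block. This is achieved by testing on elements $a$ supported on a single summand: since both $V$ and $\pi_R(a)$ are block-diagonal, $\mathrm{Ad}_V(\pi_R(a))$ stays supported on that summand, and applying $(\varphi_R\ot\id)$ recovers precisely condition (i), respectively (ii), for $(Q,V_k)$ in $\mathbf{C_u}(M_{n_k}(\C),R_k)$. Thus each $(Q,V_k)$ is an object of $\mathbf{C_u}(M_{n_k}(\C),R_k)$, and Proposition~\ref{prop:QUGofMnC} supplies a unique $C^*$-homomorphism $\phi_k:A_u(n_k,R_k^t)\to Q$ with $(\id\ot\phi_k)(U_{n_k})=V_k$, namely $\phi_k(u_{ij}^{(k)})=(V_k)_{ij}$. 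The universal property of the free product then yields a unique $\phi=*_k\phi_k:Q_u(\A,R)\to Q$ restricting to $\phi_k$ on each factor, and $(\id\ot\phi)(U)=\oplus_k(\id\ot\phi_k)(U_{n_k})=\oplus_k V_k=V$; uniqueness of $\phi$ is forced because the $u_{ij}^{(k)}$ generate the free product.

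It remains to record the quantum-group structure. The free product of CQGs is again a CQG \cite{Wan95}, with coproduct $\Delta$ restricting to the $\Delta_k$ on each factor; since each $U_{n_k}$ is a corepresentation of $A_u(n_k,R_k^t)$ and block-diagonal operators multiply block-wise, one obtains $(\id\ot\Delta)U=U_{(12)}U_{(13)}$, so $U$ is a unitary corepresentation of $Q_u(\A,R)$ on $L^2(\A,\varphi_R)$. Faithfulness of $U$ follows because its matrix coefficients include all the generators $u_{ij}^{(k)}$, which generate the entire free product. The only genuine subtlety in the whole proof is the block decoupling of the state-preservation conditions described above; everything else is the standard gluing afforded by the free product.
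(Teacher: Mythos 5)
Your proposal is correct and follows essentially the same route as the paper: decompose any unitary $V\in\pi_R(\A)\otimes Q$ block-diagonally as $\oplus_k V_k$, verify that each $(Q,V_k)$ lands in $\mathbf{C_u}(M_{n_k}(\C),R_k)$, invoke Proposition~\ref{prop:QUGofMnC} for each block, and glue the resulting morphisms via the universal property of the free product. The only difference is that you spell out the block-wise decoupling of conditions (i) and (ii) (by testing on elements supported on a single summand), a step the paper asserts without detail; this is a welcome clarification rather than a change of method.
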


\begin{proof}
Firstly, we notice that $(*_{k=1}^mA_u(n_k,R_k^t),U)$ in \eqref{eq:QUG-uob} is an object of $\mathbf{C_u}(\A,R)$,
and the corepresentation $U$ is clearly faithful.

Let $\HH_k= L^2(M_{n_k}(\C),\varphi_{R_k}).$ Then,  $L^2(\A,\varphi_R)=\bigoplus_k\HH_k$.
For any object $(Q,V)$ of the category $\mathbf{C_u}(\A,R)$, we have
$$
V\in\pi(\A)\otimes Q=\oplus_k\pi_k(M_{n_k}(\C))\otimes Q
$$
with $\pi_k$ the GNS representation of $M_{n_k}(\C)$ on $\HH_k$.
Thus $V$ preserves $\HH_k$ and
 $V_k:=V|_{\HH_k}$ is a unitary in $ B ( \HH_k ) \otimes Q $ such that $V_k\in\pi_k(\A)\otimes Q$. Since $\mathrm{Ad}_{V_k}$ preserves
the state $\varphi_{R_k}$ and $\mathrm{Ad}_{V_k^*}$ preserves
the state $\varphi_{R_k^{-1}}$,
$(Q_k,V_k)$ is an object in $\mathbf{C_u}(M_{n_k}(\C),R_k)$, where $Q_k$ is the $C^*$-algebra
generated by the matrix entries of $V_k\in M_{n_k}(\C)\otimes Q$.

By Prop.~\ref{prop:QUGofMnC}, for every $k,$ there is a unique morphism $\phi_k:\big(A_u(n_k,R_k^t),U_{n_k}\big)\to (Q_k,V_k)$.
By universality of the free product, there is a unique morphism
$$
\phi\;:\;\big(*_{k=1}^mA_u(n_k,R_k^t),\oplus_kU_{n_k}\big)\to (Q,V)
$$
that restricted to the $k$-th factor gives $\phi_k$, and this is the unique $C^*$-homomorphism
from $*_{k=1}^mA_u(n_k,R_k^t)$ to $Q$ that intertwines the corepresentations $U$ and $V$.
This proves that the object $(*_{k=1}^mA_u(n_k,R_k^t),U)$ in \eqref{eq:QUG-uob} is universal in $\mathbf{C_u}(\A,R)$.
\end{proof}

We end this subsection by showing that the notion of quantum group of a nondegenerate bilnear form 
introduced in \cite{MDV90}  can be accomodated in our picture. For a non degenerate bilinear form given 
by an $ n \times n $ matrix $ B ,$ let $ Q^B $ be the universal algebra with generators 
$ ( q_{ij} )_{1 \leq i,j \leq n} $ satisfying the relations $ B^{- 1} q^t B q = I = q B^{- 1} q^t B, $ 
where $ q $ is the matrix $ (( q_{ij} ))_{ij} .$ Then the authors in \cite{MDV90} showed that $ Q^B $ 
has a Hopf algebra structure. We refer to \cite{MDV90} for the details. 
In Section 6 of \cite{Bic03a}, Bichon gave the necessary and sufficient conditions so that $ Q^B $ is a CQG.

For an $ n \times n $  nondegenerate matrix $ T $ and a complex algebra $ X $ we can define a $ X $ valued
 bilinear form on $ \C^n \otimes X $ by $ \left\langle \sum_i c_i \otimes x_{i} , \sum_j d_j \otimes y_{j} 
\right\rangle_{T} = \sum_{ij} T ( c_i, d_j  ) x_i y_j. $ 
Moreover, let $ V ( e_i ) = \sum e_j \otimes q_{ij} $ be 
a comodule coaction of a CQG $ Q $  on $ \C^n$, where $ e_i,\, i = 1,2,..., n$, is a basis of $ \C^n.$ 
Then the equation $ B^{- 1} q^t B q = I $ corresponds to  
$ \left\langle V ( x ), V ( y ) \right\rangle_{B} = \left\langle x, y \right\rangle_{B}\, 1, $ 
while $  q B^{- 1} q^t B = I $ corresponds to  $
 \left\langle V ( x ), V ( y ) \right\rangle_{B^{- 1}} = \left\langle x, y \right\rangle_{B^{- 1}}\,1. $ 

However, since in this article we are concerned with $ \ast $-algebras, it is more relevant to consider 
the following bilinear form on $\C^n \otimes Q $: 
$$
\left\langle \sum\nolimits_i c_i \otimes x_{i} , \sum\nolimits_j d_j \otimes y_{j} \right\rangle^{\prime}_{B} 
= \sum_{ij} B ( \bar{c_i}, d_j  ) x^*_i y_j,
$$
and similarly for $ B^{- 1}. $ 
Motivated by the above  observations, we slightly modify the definition of invariance of a bilinear form 
under a comodule coaction of a Hopf algebra as in \cite{MDV90} to give the following definition:

\begin{df}\label{duboisviolettedefinitionchanged}
For a nondegenerate positive definite bilinear form given by an $ n \times n $ matrix $ B, $ 
a unital $ C^* $ algebra $ Q $ and an element $ U \in M_n ( \C ) \otimes Q, $ the pair $ ( Q, U ) $ 
is said to preserve the bilinear form $ B $ if\vspace{-5pt}
\begin{list}{}{\itemsep=0pt \leftmargin=1.5em}
\item[(i)] $U$ is a unitary,
\item[(ii)] $\left\langle U ( v ), U ( w )  \right\rangle^{\prime}_{B} = \left\langle v, w  \right\rangle^{\prime}_{B}.1$,
\item[(iii)] $\left\langle U^* ( v ), U^* ( w ) \right\rangle^{\prime}_{B^{- 1}} 
=  \left\langle v, w \right\rangle^{\prime}_{B^{- 1}}.1$,
where $ v,w \in \C^n. $
\end{list}
\end{df} 

\noindent
With this definition at hand, the following proposition follows easily.

\begin{prop}
The category of a nondegenerate  positive definite bilinear form preserving quantum families, with objects 
$ ( Q, U ) $ as in Definition \ref{duboisviolettedefinitionchanged},
  has a universal object, which has a CQG structure isomorphic to $ A_u ( n, B^{- 1} ). $ 
Thus, the universal object which can be called the compact quantum group of a nondegenerate 
positive definite bilinear form $ B $ coincides with the quantum group of $ B^{- 1} $ unitaries of $ M_n ( \C ).$
\end{prop}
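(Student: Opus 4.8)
The plan is to follow the template of the proof of Proposition~\ref{prop:QUGofMnC}: rewrite the three requirements of Definition~\ref{duboisviolettedefinitionchanged} as algebraic relations among the matrix entries of $U$, observe that they are exactly the relations defining $A_u(n,B^{-1})$, and then let the universal property of a universal $C^*$-algebra do the rest. Since $B$ is positive definite, $B^{-1}$ is a positive invertible matrix, so $A_u(n,B^{-1})$ is well defined in the sense of Definition~\ref{def:WangA}. I would encode $U$ by the matrix $u=((u_{ij}))$ through the coaction $U(e_i)=\sum_j e_j\ot u_{ij}$ (so that the coproduct is forced to be $\Delta(u_{ij})=\sum_k u_{ik}\ot u_{kj}$); with this convention condition~(i) is precisely the unitarity $uu^*=u^*u=\mathbb{I}_n$ of the corepresentation $u$.

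Next I would evaluate conditions~(ii) and~(iii) on the basis vectors. Using $B(\bar e_i,e_j)=B_{ij}$, the identity $\langle U(e_k),U(e_l)\rangle'_B=\langle e_k,e_l\rangle'_B$ becomes $\sum_{ij}u_{ki}^*\,B_{ij}\,u_{lj}=B_{kl}$, i.e.\ the matrix relation $\bar u\,B\,u^t=B$, equivalently $B^{-1}\bar u\,B\,u^t=\mathbb{I}_n$; similarly condition~(iii), written for $U^*$ and the form $\langle\cdot,\cdot\rangle'_{B^{-1}}$, becomes $u^t B^{-1}\bar u=B^{-1}$, equivalently $u^t B^{-1}\bar u\,B=\mathbb{I}_n$. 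Comparing with Definition~\ref{def:WangA} for $R=B^{-1}$ (so that $R^{-1}=B$), these are exactly the defining relations $uu^*=u^*u=\mathbb{I}_n$ and $u^t(R\bar uR^{-1})=(R\bar uR^{-1})u^t=\mathbb{I}_n$ of $A_u(n,B^{-1})$. From here the argument is formal and identical to that of Proposition~\ref{prop:QUGofMnC}: the pair $(A_u(n,B^{-1}),U)$, with $U$ the fundamental corepresentation, satisfies (i)--(iii) and is therefore an object; and for an arbitrary object $(Q,V)$ the entries of its matrix $v$ satisfy the same relations, so the universal property of $A_u(n,B^{-1})$ provides the unique $*$-homomorphism $u_{ij}\mapsto v_{ij}$, which is the unique morphism. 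Thus $(A_u(n,B^{-1}),U)$ is the universal (initial) object and carries its standard CQG structure. The final identification with the quantum group of $B^{-1}$-unitaries of $M_n(\C)$ is then read off Proposition~\ref{prop:QUGofMnC}, which gives $Q_u(M_n(\C),R)=A_u(n,R^t)$; for the positive matrix $B^{-1}$ one has $A_u(n,(B^{-1})^t)\cong A_u(n,B^{-1})$ (this is immediate if $B$ is symmetric, and follows in general from the isomorphism classification of \cite{Wan98b}, as $R$ and $R^t$ have the same spectrum).

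The step requiring genuine care is the translation of conditions~(ii) and~(iii). The point is to track the conjugation $\bar c_i$ on the $\C^n$-argument together with the involution $x_i^*$ on the $Q$-coefficient so that the contraction produced is the \emph{biunitary} expression $\bar u\,B\,u^t$ (and its $B^{-1}$-companion), rather than the naive sesquilinear expression $u^*Bu$. This distinction is the whole content of the statement: the sesquilinear relation $u^*Bu=B$, combined with $u^*=u^{-1}$ coming from~(i), would force $u$ to commute with $B$ and collapse the universal object to the (small) stabiliser of $B$, whereas the biunitary relations leave $\bar u$ free of $B$ and reproduce the full, ``large'' quantum group $A_u(n,B^{-1})$. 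Once the bars and transposes are placed correctly, everything else is the universal-property bookkeeping already performed in Proposition~\ref{prop:QUGofMnC}.
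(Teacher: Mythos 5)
The paper offers no proof of this proposition (it is asserted to ``follow easily''), so the only question is whether your argument is sound; the intended route is certainly the one you take, and your translation of conditions (ii) and (iii) is correct: in the convention $U(e_k)=\sum_i e_i\ot u_{ki}$ they become $\bar{u}\,B\,u^t=B$ and $u^tB^{-1}\bar{u}=B^{-1}$, which are precisely the two twisted relations of $A_u(n,B^{-1})$ in Definition~\ref{def:WangA}. The gap is in the one step you wave through, namely the claim that condition (i) ``is precisely the unitarity $uu^*=u^*u=\mathbb{I}_n$''. In your convention the element $U\in M_n(\C)\ot Q$ is the matrix $u^t$ (its $(i,k)$ entry is $u_{ki}$), so ``$U$ is a unitary'' reads $u^t\bar{u}=\bar{u}\,u^t=\mathbb{I}_n$, i.e.\ unitarity of $\bar{u}$, not of $u$. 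Over a noncommutative $Q$ these are inequivalent --- this is exactly the distinction on which the whole theory of $A_u(n,R)$ rests, and exactly the bar-versus-star bookkeeping you yourself single out as ``the whole content of the statement'', only it bites in condition (i) rather than in (ii)--(iii).

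This is not cosmetic. Conditions (ii) and (iii) already say that $u^t$ is invertible with inverse $B^{-1}\bar{u}B$, while your reading of (i) says its inverse is $\bar{u}$; uniqueness of inverses then forces $\bar{u}B=B\bar{u}$, and for generic $B$ (distinct eigenvalues) the ``universal object'' collapses to a quotient of $*_{k=1}^{n}C(U(1))$ exactly as in Proposition~\ref{prop:3.9} --- not $A_u(n,B^{-1})$. The alternative matrix convention $U=\sum_{ij}e_{ij}\ot u_{ij}$ fares no better: there (i) is $uu^*=u^*u=\mathbb{I}_n$ as you want, but (ii) becomes the sesquilinear relation $u^*Bu=B$, which together with unitarity forces $[u,B]=0$ --- the very collapse you warn against in your last paragraph. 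To make the proposition come out as stated one must read (i) as unitarity of the matrix $((u_{ij}))$ while reading (ii)--(iii) in the coaction convention; you should either make this (convention-mixing) choice explicit and justify it, or record that Definition~\ref{duboisviolettedefinitionchanged} needs a corresponding adjustment. The remaining universality bookkeeping, and the identification $A_u(n,(B^{-1})^t)\cong A_u(n,B^{-1})$ via diagonalization, are fine.
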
  

\subsection{The case of real $C^*$-algebras}\label{sec:3.5}

In this section, we introduce the notion of quantum families of unitaries of a finite dimensional real $ C^* $-algebra by 
relating it to the quantum families of unitaries of its complexification.
As before, we start with an easy characterization of the group of unitaries for a finite dimensional real $ C^* $-algebra. We identify a real $C^*$-algebra $\A$ with the fixed point subalgebra of its complexification $\A_{\C}$ for a canonical
involutive antilinear automorphism $\sigma$. This in particular means that $U(\A)=\{u\in U(\A_{\C}):\sigma(u)=u\}$.
More generally, if $G\subset U(\A_\C)$ is a compact subgroup,
we have $G\subset U(\A)$ if and only if $\sigma(u)=u$ for all $u\in G$.
This can be rephrased in the dual language of corepresentations, using
the following observation.

\begin{prop} \label{rem:classUA}
Let $\A$ and $ \sigma $ be as above.
Let $T$ be an element in $\A_{\C}\otimes C(G)$, where $G$ is a compact group. Then
$( \id \otimes \varphi ) T  $ belongs to $\A $ 
for any state $ \varphi $ on $ C ( G )$
if and only if $ T = ( \sigma \otimes * ) T. $
\end{prop}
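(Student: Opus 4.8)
The plan is to reduce both implications to a single intertwining identity relating $\sigma$ to the slice maps $\id\otimes\varphi$. Since $\A$ is the fixed-point subalgebra of the antilinear automorphism $\sigma$, an element $x\in\A_{\C}$ lies in $\A$ precisely when $\sigma(x)=x$; thus the condition ``$(\id\otimes\varphi)T\in\A$'' is the same as ``$\sigma\bigl((\id\otimes\varphi)T\bigr)=(\id\otimes\varphi)T$'', and everything will follow once I understand how $\sigma$ commutes past slicing.

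First I would prove that, for every state $\varphi$ on $C(G)$,
$$
\sigma\bigl((\id\otimes\varphi)T\bigr)=(\id\otimes\varphi)\bigl((\sigma\otimes *)T\bigr).
$$
Because $\A_{\C}$ is finite dimensional, I can fix a complex basis $\{a_i\}$ and write $T=\sum_i a_i\otimes f_i$ uniquely, with $f_i\in C(G)$. Antilinearity of $\sigma$ gives $\sigma\bigl((\id\otimes\varphi)T\bigr)=\sum_i\overline{\varphi(f_i)}\,\sigma(a_i)$, whereas $(\id\otimes\varphi)\bigl((\sigma\otimes *)T\bigr)=\sum_i\varphi(f_i^*)\,\sigma(a_i)$, where $*$ is the complex conjugation of functions, i.e.\ the involution of $C(G)$. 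These coincide because a state is self-adjoint, $\varphi(f^*)=\overline{\varphi(f)}$; this is the only point at which I use that $\varphi$ is a state rather than an arbitrary functional.

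Granting the identity, the implication ``$T=(\sigma\otimes *)T\Rightarrow(\id\otimes\varphi)T\in\A$'' is immediate: substituting $T=(\sigma\otimes *)T$ into the right-hand side yields $\sigma\bigl((\id\otimes\varphi)T\bigr)=(\id\otimes\varphi)T$, so each slice is $\sigma$-fixed and hence lies in $\A$.

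For the converse I would show that the slices of $S:=(\sigma\otimes *)T-T$ all vanish and then deduce $S=0$. Assuming $(\id\otimes\varphi)T\in\A$ for all states $\varphi$, the identity becomes $(\id\otimes\varphi)S=0$ for every state. Expanding $S=\sum_i a_i\otimes h_i$ in the same basis, this reads $\sum_i\varphi(h_i)\,a_i=0$, so linear independence of the $a_i$ forces $\varphi(h_i)=0$ for all $i$ and all states $\varphi$. The one mildly delicate step is concluding $h_i=0$ from vanishing on all states; here I would invoke that the states of $C(G)$ separate its points, concretely that the point evaluations $\delta_t$, $t\in G$, are states, so $\delta_t(h_i)=h_i(t)=0$ for all $t$ gives $h_i=0$. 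Hence $S=0$, i.e.\ $(\sigma\otimes *)T=T$, which finishes the proof.
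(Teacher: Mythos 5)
Your proof is correct and follows essentially the same route as the paper's: both rest on the identity $\sigma\bigl((\id\otimes\varphi)T\bigr)=(\id\otimes\varphi)\bigl((\sigma\otimes *)T\bigr)$, obtained from $\varphi(f^*)=\overline{\varphi(f)}$ together with antilinearity of $\sigma$, and on the fact that states separate points. Your basis expansion and the explicit use of point evaluations $\delta_t$ merely spell out the ``states separate points'' step that the paper invokes in one line.
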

\begin{proof}
Since states separates points of a $C^*$-algebra,
$T = ( \sigma \otimes \ast ) T $ if and only if
 $ (\id\otimes \varphi ) T   =  (\id\otimes \varphi)( \sigma \otimes \ast ) T
 =\sigma(\id\otimes \varphi) T$ for all states $\varphi$,
where in the second equality we used $\varphi\circ *=*\circ\varphi$,
antilinearity of $\sigma$ and identify $\A_\C\otimes\C$ with $\A_\C$.
This is equivalent to $ ( \id \otimes \phi ) T \in \A$.
\end{proof}

\noindent
Inspired by this observation, we want to define a category of quantum families of unitaries of $\A$.
A first idea is to define such a category  as the subcategory of $\mathbf{C_u}(\A_{\C},R)$ 
whose objects $(Q,U)$ satisfy the additional condition
\begin{equation}\label{eq:toocomm}
(\sigma\otimes *)(U)=U \;.
\end{equation} 
It turns out that
this does not allow to accomodate non-Kac type examples. So we need to broaden the scope of our definition.
 The plan of this section is as follows: in Sec.~\ref{sebsec:one}, we explain what happens if we take 
\eqref{eq:toocomm}, illustrate it with some examples and then in Sec.~\ref{sebsec:two}, we define the 
category of quantum family of unitaries of a finite-dimensional real $C^*$-algebra and prove the existence 
of the universal object. Finally, In Sec.~\ref{sebsec:three}, we compute the universal object first for matrix
 algebras and then, using Prop.~\ref{prop:classR}, for any finite-dimensional real $C^*$-algebra.

\subsubsection{A preliminary study}\label{sebsec:one}
We want to explain that if we adopt \eqref{eq:toocomm}, we can get only Kac type examples as quantum
 unitary group.  As a first observation, we notice that \eqref{eq:toocomm} implies that $\mathrm{Ad}_U$
 preserves the usual trace. In fact, a more general statement is proved in the next lemma.

Here and in the following, we identify $\A_{\C}$ with the $C^*$-subalgebra 
$\A_{\C}\otimes 1_Q\subset\A_{\C}\otimes Q.$ 
Moreover, for an element $ T \in \A_{\C}, $ we write $ T $ instead of $ T \otimes 1 $ to simplify notations.

\begin{lemma}\label{lemma:usualTr}
Let $\A$ be a finite-dimensional real $C^*$-algebra, $F\in\A_{\C}$ an invertible element
and let $\pi:\A_{\C}\to\B\big(L^2(\A_{\C},\varphi_{\sigma(F^*F)})\big)$ be the GNS representation.
For  a (complex) unital $C^*$-algebra $Q,$
let $ U $ be a unitary element in  $\pi(\A_{\C})\otimes Q$ such that:
\begin{equation}\label{eq:based}
(\sigma\otimes *)(U)=F^{-1}UF \;,
\end{equation}
Let $ R = \sigma ( F^* F ). $ Then, $\varphi_R$ is preserved
by $\mathrm{Ad}_U$ and  $\varphi_{R^{-1}}$
is preserved by $\mathrm{Ad}_{U^*}$. In particular, taking $ F = 1, $ 
we deduce that the usual trace is preserved.
\end{lemma}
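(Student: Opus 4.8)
The plan is to reduce each invariance statement to an operator identity in $\A_\C\otimes Q$ and then collapse it using unitarity of $U$. Throughout I identify $\pi(\A_\C)$ with $\A_\C$ (the GNS representation is faithful since $R$ is invertible) and write $U=\sum_\alpha m_\alpha\otimes q_\alpha$ with $m_\alpha\in\A_\C$, $q_\alpha\in Q$. The starting point is the elementary fact that $\tr\circ\sigma=\overline{\tr}$ on $\A_\C$, which one checks blockwise on $M_n(\R),M_n(\C),M_n(\Q)$ using Prop.~\ref{sigmaforrealalgebrasformulae}. Since $R=\sigma(F^*F)$ and $\sigma$ is an involutive automorphism commuting with $*$, this yields, for every $z\in\A_\C$, the relation $\varphi_R(z)=\overline{\varphi_{F^*F}(\sigma(z))}$, and hence the key identity
\[
\bigl[(\varphi_R\otimes\id)(Z)\bigr]^*=(\varphi_{F^*F}\otimes\id)\bigl((\sigma\otimes *)(Z)\bigr),\qquad Z\in\A_\C\otimes Q .
\]

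First I would prove that $\varphi_R$ is $\mathrm{Ad}_U$-invariant. Set $Z=\mathrm{Ad}_U(x)=U(x\otimes1)U^*$ and apply the key identity. A direct expansion gives $(\sigma\otimes *)(U(x\otimes1)U^*)=\sum_{\alpha\beta}\sigma(m_\alpha)\sigma(x)\sigma(m_\beta)^*\otimes q_\beta q_\alpha^*$; after pairing with $\varphi_{F^*F}=\tr(F^*F\,\cdot\,)$ and using cyclicity of $\tr$ on the $\A_\C$-leg, the result is exactly $(\tr\otimes\id)\bigl(V^*(F^*F\otimes1)V(\sigma(x)\otimes1)\bigr)$, where $V:=(\sigma\otimes *)(U)$. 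Now I would invoke the hypothesis $V=F^{-1}UF$: the inner factors telescope, $(F^*)^{-1}(F^*F)F^{-1}=1$, so that, using $U^*U=1$, one gets $V^*(F^*F\otimes1)V=F^*F\otimes1$. Hence $\bigl[(\varphi_R\otimes\id)\mathrm{Ad}_U(x)\bigr]^*=\tr(F^*F\sigma(x))\,1=\overline{\varphi_R(x)}\,1$, and taking adjoints yields $(\varphi_R\otimes\id)\mathrm{Ad}_U(x)=\varphi_R(x)1$, as wanted.

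The statement for $\mathrm{Ad}_{U^*}$ and $\varphi_{R^{-1}}$ I would prove by the mirror-image argument: here $R^{-1}=\sigma\bigl((F^*F)^{-1}\bigr)$, so the relevant key identity pairs $\varphi_{R^{-1}}$ with $\varphi_{(F^*F)^{-1}}$, and expanding $(\sigma\otimes *)(U^*(x\otimes1)U)$ reproduces $(\tr\otimes\id)\bigl(V((F^*F)^{-1}\otimes1)V^*(\sigma(x)\otimes1)\bigr)$. The same telescoping, now using $UU^*=1$, collapses $V((F^*F)^{-1}\otimes1)V^*$ to $(F^*F)^{-1}\otimes1$ and gives $(\varphi_{R^{-1}}\otimes\id)\mathrm{Ad}_{U^*}(x)=\varphi_{R^{-1}}(x)1$. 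Finally, specializing to $F=1$ gives $R=\sigma(1)=1$ and $\varphi_R=\tr$, so the usual trace is preserved.

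I expect the only genuine obstacle to be bookkeeping: the map $\sigma\otimes *$ is neither a homomorphism nor an anti-homomorphism, since it preserves the order of the $\A_\C$-factors (as $\sigma$ is multiplicative) but reverses that of the $Q$-factors (as $*$ is antimultiplicative). Consequently $(\sigma\otimes *)(\mathrm{Ad}_U(x))$ emerges with the $Q$-letters in the ``wrong'' order $q_\beta q_\alpha^*$. The reason for routing the computation through $\varphi_{F^*F}\otimes\id$ and the trace is precisely that cyclicity of $\tr$ reabsorbs this flip into the tidy sandwich $V^*(F^*F\otimes1)V$, after which the reality relation $V=F^{-1}UF$ and unitarity do all the work.
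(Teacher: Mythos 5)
Your proof is correct. It rests on the same three ingredients as the paper's argument --- the relation \eqref{eq:based}, unitarity of $U$, and cyclicity of the trace --- but packages them differently. The paper applies $\sigma\otimes *$ to \eqref{eq:based} in order to rewrite $U$ itself as $\sum_i\sigma(F^{-1})\sigma(U_{(1)i})\sigma(F)\otimes U_{(2)i}^*$, substitutes this into $\mathrm{Ad}_U(a)$, and then moves the $\sigma(F)$-factors around inside $\tr(R\,\cdot\,)$ until the $\A_{\C}$-leg exhibits $\sigma(U_{(1)j}^*)\sigma(U_{(1)i})$ and the $Q$-leg $U_{(2)i}^*U_{(2)j}$, i.e.\ $(\sigma\otimes *)(U^*U)=1$; no complex conjugation of the functional is ever taken. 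You instead apply $\sigma\otimes *$ to the whole of $\mathrm{Ad}_U(x)$, which forces you through the auxiliary identity $\tr\circ\sigma=\overline{\tr}$ (absent from the paper, and which does require the blockwise verification via Prop.~\ref{sigmaforrealalgebrasformulae}) so as to convert $\bigl[(\varphi_R\otimes\id)(Z)\bigr]^*$ into $(\varphi_{F^*F}\otimes\id)\bigl((\sigma\otimes *)(Z)\bigr)$; the hypothesis then enters as the telescoping $V^*(F^*F\otimes 1)V=F^*F\otimes 1$ for $V=(\sigma\otimes *)(U)$, and a final adjoint undoes the conjugation. The two computations are essentially dual: your route makes the roles of the hypothesis and of $U^*U=1$ (resp.\ $UU^*=1$ in the second half) more transparent at the price of the extra conjugation step and the auxiliary identity, while the paper's avoids $\tr\circ\sigma=\overline{\tr}$ entirely. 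Both correctly handle the fact that $\sigma\otimes *$ is multiplicative on the first leg and antimultiplicative on the second, which is the only real pitfall here.
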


\begin{proof}
Let $ U = \sum_i U_{(1)i} \otimes U_{(2)i}. $ Equation \eqref{eq:based} implies
$ U = \sum_i \sigma ( F^{ - 1 } ) \sigma ( U_{(1)i} ) \sigma ( F ) \otimes U^*_{(2)i} $  
and $ U^* = \sum_j \sigma ( F^* ) \sigma ( U^*_{(1)j}   ) \otimes U_{(2)j}. $ 
Using these, we get:
\begin{align*}
(\varphi_R\otimes \id_Q &)\mathrm{Ad}_U(a) =
\sum\nolimits_{i,j} \tr ( R \sigma  ( F^{ - 1} ) \sigma ( U_{(1)i}  ) \sigma (  F ) a \sigma ( F^* ) ) \sigma ( U^*_{(1)j}  ) 
\sigma ( ( F^{- 1} )^*  ) ) U^*_{(2)i}  U_{(2)j}\\
&= \sum\nolimits_{i,j} \tr (  \sigma ( ( F^{- 1} )^*  ) R \sigma  ( F^{ - 1} ) \sigma ( U_{(1)i}  ) \sigma (  F ) a \sigma ( F^* ) ) 
\sigma ( U^*_{(1)j}  )  ) U^*_{(2)i}  U_{(2)j}\\
&=\sum\nolimits_{i,j} \tr ( \sigma ( U^*_{(1)j} ) \sigma ( U_{(1)i} ) \sigma ( F ) a \sigma ( F^* )    ) U^*_{(2)i}  U_{(2)j}\\
&=( \tr \otimes \id_Q ) ( \sigma \otimes \ast ) \left( \left( \sum\nolimits_j U^*_{(1)j} \otimes   U^*_{(2)j}\right) \left( \sum\nolimits_i U_{(1)i}
 \otimes   U_{(2)i}\right)   \right) \big(\sigma ( F ) a \sigma ( F^* ) \otimes 1 \big) \\
&=\tr (  \sigma ( F ) a \sigma ( F^* ) ). 1_Q \; ,
\end{align*}
where we have used the unitarity of $U$. Therefore, $\varphi_R$ is preserved
by $\mathrm{Ad}_U$. Similarly one shows that $\varphi_{R^{-1}}$
is preserved by $\mathrm{Ad}_{U^*}$.
\end{proof}

If a CQG has a unitary corepresentation on $\C^n$ such that its adjoint coaction
on $M_n(\C)$ preserves the trace, then it is a quotient of $A_u(n)$, that is known to be a Kac algebra 
(i.e.~the square of the antipode is the identity). Therefore, in order to obtain non-Kac algebras, 
we need to relax this condition.
Notice that the above phenomenon is a purely quantum phenomenon, since
a unitary group action always preserves the trace. In fact, one can show that:

\begin{prop}\label{prop:3.9}
Let $ \A   $ be a real $ C^* $-algebra such that $ \A_{\C} = \oplus^m_{i = 1} M_{n_i} ( \C ) $, and let
$ R = \oplus^m_{i = 1} R_i \in \A_{\C} $ be a positive invertible operator such that each $ R_k $ has $n_k$ distinct 
eigenvalues. Consider the subcategory of $\mathbf{C_u}(\A_{\C},R)$ whose objects $(Q,U)$ satisfy the 
additional condition \eqref{eq:toocomm}. Then, any CQG in this category with a faithful corepresentation 
is a quotient of a free product $*_{k=1}^nC(U(1))$ with $n=\sum_{i=1}^mn_i$.
\end{prop}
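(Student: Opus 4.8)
The plan is to reduce to the matrix blocks, use \eqref{eq:toocomm} to force trace-invariance on top of the $\varphi_R$-invariance already built into $\mathbf{C_u}(\A_\C,R)$, and then exploit the distinctness of the eigenvalues of each $R_k$ to show that the defining corepresentation of each block must be diagonal. First I would decompose exactly as in the proof of Theorem~\ref{thm:QUGA}: writing $\A_\C=\bigoplus_{k=1}^m M_{n_k}(\C)$ and $L^2(\A_\C,\varphi_R)=\bigoplus_k\HH_k$, a faithful $U$ splits as $U=\bigoplus_k V_k$ with $V_k$ a unitary corepresentation on $\HH_k$ generating a Woronowicz subalgebra $Q_k\subseteq Q$. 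Conditions (i)--(ii) of Def.~\ref{def:QUG} say $(Q_k,V_k)$ is an object of $\mathbf{C_u}(M_{n_k}(\C),R_k)$, so by Prop.~\ref{prop:QUGofMnC} the matrix $v=v^{(k)}$ of $V_k$ satisfies the $A_u(n_k,R_k^t)$ relations. Since $Q$ is generated by all the $Q_k$, it suffices to show each $Q_k$ is a quotient of $*_{i=1}^{n_k}C(U(1))$ and then assemble the blocks.

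Next I would bring in \eqref{eq:toocomm}. Identifying $\pi_R(\A_\C)\cong\A_\C$, the element $U$ and the condition \eqref{eq:toocomm} are intrinsic to the coaction $\mathrm{Ad}_U$ on the abstract algebra $\A_\C$; applying Lemma~\ref{lemma:usualTr} with $F=1$ then shows that $\mathrm{Ad}_U$ \emph{additionally} preserves $\tr$ and $\mathrm{Ad}_{U^*}$ preserves $\tr$ as well. Because $U$ is block-diagonal, $\mathrm{Ad}_{V_k}$ preserves the block trace $\tr_k$, so $v^{(k)}$ also satisfies the $A_u(n_k)$ relations $v^t\bar v=\bar v v^t=\mathbb{I}$. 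Now I would diagonalize: since $R_k$ has $n_k$ distinct eigenvalues, I choose the orthonormal eigenbasis so that $R_k=\mathrm{diag}(\lambda_1,\dots,\lambda_{n_k})$ with the $\lambda_i$ distinct, whence $R_k^t=R_k$. Combining $\bar v v^t=v^t\bar v=\mathbb{I}$ with the $\varphi_{R_k}$-relation $v^tR_k\bar v R_k^{-1}=\mathbb{I}$ yields $\bar v=(v^t)^{-1}$ and then $v^tR_k=R_kv^t$; comparing entries gives $(\lambda_j-\lambda_i)v_{ji}=0$, so distinctness forces $v_{ji}=0$ for $i\neq j$, i.e.\ $v^{(k)}$ is diagonal. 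This diagonality step is the crux of the argument, and it is precisely here that the hypothesis on the spectrum of $R_k$ is indispensable: without distinct eigenvalues the commutation $v^tR_k=R_kv^t$ no longer kills the off-diagonal entries, and non-diagonal (hence non-Kac) corepresentations survive.

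Finally, with $v^{(k)}$ diagonal, unitarity $vv^*=v^*v=\mathbb{I}$ forces each $v_{ii}$ to be a unitary, and the matrix coproduct collapses to $\Delta(v_{ii})=v_{ii}\otimes v_{ii}$, so each $v_{ii}$ is a group-like unitary. By the universal property of the free product of CQGs there is thus a surjection $*_{i=1}^{n_k}C(U(1))\to Q_k$. Assembling the $m$ blocks (again via the free-product universal property, since $Q$ is generated by the $Q_k$), I conclude that $Q$ is a quotient of $*_{k=1}^m*_{i=1}^{n_k}C(U(1))=*_{k=1}^nC(U(1))$ with $n=\sum_{i=1}^m n_i$, as claimed. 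The only point requiring care beyond the diagonality computation is the justification that \eqref{eq:toocomm} yields trace-invariance across the (different) GNS inner products, which I would handle by emphasizing that trace-invariance is a property of the underlying coaction on the finite-dimensional $\A_\C$ and hence representation-independent.
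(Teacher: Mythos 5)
Your proof is correct and follows essentially the same route as the paper's: decompose $U=\oplus_k U_k$ via the proof of Theorem~\ref{thm:QUGA}, invoke Lemma~\ref{lemma:usualTr} with $F=1$ to get the additional relations $\bar{U_k}U_k^t=U_k^t\bar{U_k}=\mathbb{I}_{n_k}$, diagonalize $R_k$, deduce that $U_k$ commutes with $R_k^t$ and hence is diagonal by distinctness of the eigenvalues, and conclude that the diagonal entries are group-like unitaries generating copies of $C(U(1))$. The only cosmetic differences are that the paper cites Lemma~2.1 of \cite{DW96} for the reduction to diagonal $R_k$ where you change basis directly, and that you spell out the representation-independence of trace-invariance, which the paper leaves implicit.
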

\begin{proof}
Let $(Q,U)$ be an object of $\mathbf{C_u}(\A_{\C},R)$ with $ Q $ a CQG and a faithful corepresentation 
$U$ satisfying condition \eqref{eq:toocomm}. Let $ U_k = U|_{\HH_k}$, 
where $ \HH_k = L^2 ( M_{n_k} ( \C ), \varphi_{R_k} ). $ By using the proof of Theorem \ref{thm:QUGA}, 
we deduce that $ Q $ is a quotient of $ \ast^m_{k = 1} A_u ( n_k, R^t_k ) $ and $ U = \oplus_k U_k. $ 
Moreover, for each $ k, $ we have 
$$
U^t_k R_k^t\bar{ U_k}=R_k^t \;.
$$
By Lemma 2.1 of \cite{DW96}, we can assume that each $R_k$ is a diagonal matrix.
However, by  Lemma \ref{lemma:usualTr} and the proof of Proposition \ref{prop:QUGofMnC}, 
we have $\bar{ U_k} U^t_k= U^t_k \bar{ U_k} = \mathbb{I}_{n_k}$ for all $ k. $
Using $\bar U U^t=\mathbb{I}_n$ on the previous equation, we get
$
R^t_k\bar{ U_k}=\bar{ U_k}R^t_k
$.
Since $R^t_k$ is diagonal with distinct entries, $\bar{ U_k}$ commutes with $R^t_k$ if and only if
it is diagonal too. Thus
$U_k=\mathrm{diag}(u_1,u_2,\ldots,u_{n_k})$, where each $u_i$ generates a copy of $C(U(1))$.
Hence $Q$ is a quotient of $*_{k=1}^nC(U(1))$ with $n=\sum_{k=1}^mn_k$.
\end{proof}

The previous proposition applies, for example, to $R$ as in \eqref{eq:RR},
when $q\neq 1$. Moreover, applying this result to the real $ C^* $-algebra  
$\A=\Q$ with $R\in\A_{\C}=M_2(\C)$ the density matrix of the Powers state, we have:

\begin{cor}
Let $0<q\leq 1$ and $R=[2]_q^{-1}\,\mathrm{diag}(q^{-1},q)$.
Consider the subcategory of $\mathbf{C_u}(M_2(\C),R)$ whose objects $(Q,U)$
satisfy the additional condition \eqref{eq:toocomm},
with $\sigma(m)=\sigma_2\overline{m}\sigma_2$
as in Prop.~\ref{sigmaforrealalgebrasformulae}, with $n=1$. Then any CQG in this category 
is a quotient of $ C ( U ( 1 ) ). $
\end{cor}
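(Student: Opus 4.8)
The plan is to apply Proposition~\ref{prop:3.9} directly to the real $C^*$-algebra $\A=\Q$. First I would verify that the hypotheses of Proposition~\ref{prop:3.9} are met in this specific case. We have $\A=\Q$, whose complexification is $\A_\C=M_2(\C)$, so in the notation of the proposition $m=1$ and $n_1=2$. The matrix $R=[2]_q^{-1}\,\mathrm{diag}(q^{-1},q)$ is positive and invertible, and its two eigenvalues $[2]_q^{-1}q^{-1}$ and $[2]_q^{-1}q$ are distinct precisely when $q\neq 1$, which is guaranteed by the hypothesis $0<q\leq 1$ together with the requirement (implicit in the corollary being a nontrivial specialization) that $q<1$; I would note that the case $q=1$ degenerates. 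The antilinear automorphism $\sigma(m)=\sigma_2\overline{m}\sigma_2$ is exactly the one prescribed by Prop.~\ref{sigmaforrealalgebrasformulae}(3) for $\Q$ with $n=1$.

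With the hypotheses verified, Proposition~\ref{prop:3.9} gives that any CQG in the stated subcategory of $\mathbf{C_u}(M_2(\C),R)$ with a faithful corepresentation is a quotient of the free product $*_{k=1}^{n}C(U(1))$, where $n=\sum_{i=1}^{m}n_i$. Here $m=1$ and $n_1=2$, so $n=2$ and the relevant free product is $C(U(1))*C(U(1))$. The corollary, however, asserts the stronger conclusion that the CQG is a quotient of a \emph{single} copy of $C(U(1))$. So the main obstacle is closing this gap: I must show that the two generators $u_1,u_2$ of the two free copies of $C(U(1))$ produced by the proof of Proposition~\ref{prop:3.9} are forced to coincide (or to be related) by the extra structure coming from the reality condition \eqref{eq:toocomm} for $\Q$, which is stronger than the generic reality condition handled in Prop.~\ref{prop:3.9}.

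The key step will be to exploit the explicit form of $\sigma$ for the quaternions. In the proof of Proposition~\ref{prop:3.9} one diagonalizes $U_1=\mathrm{diag}(u_1,u_2)$ with each $u_i$ a unitary generating $C(U(1))$. I would now impose the condition $(\sigma\otimes *)(U_1)=U_1$ with $\sigma(m)=\sigma_2\overline{m}\sigma_2$. Writing $U_1=\sum_{ij}\pi_R(e_{ij})\otimes v_{ij}$, the diagonality gives $v_{ij}=0$ for $i\neq j$, $v_{11}=u_1$, $v_{22}=u_2$. Applying $\sigma$ conjugates by $\sigma_2$, which swaps the two diagonal entries (up to signs, since $\sigma_2 e_{11}\sigma_2=e_{22}$ and $\sigma_2 e_{22}\sigma_2=e_{11}$), while $*$ sends $u_i$ to $u_i^*$. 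Imposing $(\sigma\otimes *)(U_1)=U_1$ therefore forces a relation of the form $u_2^*=u_1$, i.e.\ $u_2=u_1^*$. Hence $u_2$ is not an independent generator: the whole $C^*$-algebra $Q$ is generated by the single unitary $u_1$, and so $Q$ is a quotient of $C(U(1))$, as claimed.

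In carrying this out I would be careful about the interaction between the GNS representation $\pi_R$ (which rescales basis elements by powers of the eigenvalues of $R$) and the conjugation by $\sigma_2$, to confirm that the signs and the weights work out so that the reality condition genuinely identifies $u_2$ with $u_1^*$ rather than merely constraining their moduli; this bookkeeping is the only delicate point, and it is routine once the explicit matrix of $U_1$ in the diagonalizing basis is written down. The conclusion then follows immediately, since a single unitary generator yields precisely $C(U(1))$ as the universal object and every CQG in the category is a quotient thereof.
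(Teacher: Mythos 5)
Your proposal is correct, and it follows the route the paper intends: the corollary is presented as a direct application of Proposition~\ref{prop:3.9}, with no further proof given. In fact your write-up is \emph{more} complete than the paper's, because you correctly observe that Proposition~\ref{prop:3.9} by itself only yields a quotient of $C(U(1))*C(U(1))$ (here $m=1$, $n_1=2$), and that the sharper conclusion requires feeding the diagonal form $U=\mathrm{diag}(u_1,u_2)$ back into the quaternionic reality condition: since $\sigma_2 e_{11}\sigma_2=e_{22}$ and $\sigma_2 e_{22}\sigma_2=e_{11}$, the relation $(\sigma\otimes *)(U)=U$ forces $u_2=u_1^*$, so $Q$ is generated by a single unitary and is a quotient of $C(U(1))$. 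Your bookkeeping is fine -- $\sigma$ acts on the algebra elements $e_{ij}$ (transported through the faithful GNS representation), not on the Hilbert-space vectors, so the weights $(R^t)^{1/2}$ never enter, and the minus signs only affect the off-diagonal entries, which vanish anyway. You are also right to flag that the hypothesis of distinct eigenvalues fails at $q=1$, where the category degenerates to that of Proposition~\ref{prop:qugH} with universal object $C(SU(2))$; the corollary should really read $0<q<1$.
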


The classical
group of unitary elements of the real $C^*$-algebra $\Q$ is $ SU ( 2 ). $  Thus, for the above choice
 of $R,$ the CQG
that we get is neither a deformation of the classical unitary group nor does it contain a deformation of it. 
Thus the 
condition \eqref{eq:toocomm} is evidently too restrictive. However, we get a much better result 
if we change it slightly.

\begin{prop}\label{prop:qugH}
Let $R=[2]_q^{-1}\,\mathrm{diag}(q^{-1},q)$.
Consider the subcategory of $\mathbf{C_u}(M_2(\C),R)$ whose objects $(Q,U)$
satisfy the additional condition:
\begin{equation}\label{eq:correctcond}
(\sigma\otimes *)(U)=R^{\frac{1}{2}}UR^{-\frac{1}{2}}
\end{equation}
with $\sigma$ as in Prop.~\ref{sigmaforrealalgebrasformulae} (with $n=1$).
The universal object in this category is $SU_q(2)$, for any $0<q\leq 1$. Taking $ q = 1 ,$
 we recover the fact that the classical unitary group is $ SU ( 2 ). $ 
\end{prop}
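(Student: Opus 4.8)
The plan is to identify the universal object in this category by reducing the defining condition \eqref{eq:correctcond} to the relations defining $SU_q(2)$ via the presentation $A_o(2,F)$ recalled in Sec.~\ref{rem:sun}. I would first invoke Prop.~\ref{prop:QUGofMnC}: by that result, any object $(Q,U)$ of $\mathbf{C_u}(M_2(\C),R)$ satisfying conditions (i) and (ii) of Def.~\ref{def:QUG} is obtained from a morphism out of the universal object $A_u(2,R^t)$, so that $U=\sum_{i,j}\pi_R(e_{ij})\otimes u_{ij}$ with $u=((u_{ij}))$ biunitary in the sense that $u$ and $R^t\hspace{1pt}\bar u(R^t)^{-1}$ are both unitary. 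Here $R$ is diagonal, so $R^t=R$, and the extra condition \eqref{eq:correctcond} is a relation purely among the generators $u_{ij}$; the whole point is to translate it into commutation and $*$-relations on $u$.

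The key computation I would carry out is to make condition \eqref{eq:correctcond} explicit. Writing $U=\sum_{i,j}\pi_R(e_{ij})\otimes u_{ij}$, I would compute $(\sigma\otimes *)(U)$ using the formula $\sigma(m)=\sigma_2\overline{m}\sigma_2$ from Prop.~\ref{sigmaforrealalgebrasformulae} (case $\Bbbk=\Q$, $n=1$) acting on the matrix units $\pi_R(e_{ij})$, and separately compute $R^{\frac12}UR^{-\frac12}$, where the conjugation by $R^{\pm\frac12}$ acts through $\pi_R$ on the $M_2(\C)$ leg. Equating the two sides leg-by-leg yields a system of relations expressing each $u_{ij}^*$ as a specific (sign- and $q$-weighted) entry of $u$. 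Concretely, I expect this to force a relation of the shape $\bar u = (\text{const})\,F u F^{-1}$ for the particular $F=\bigl(\begin{smallmatrix}0&-q^{1/2}\\ q^{-1/2}&0\end{smallmatrix}\bigr)$ appearing in Sec.~\ref{rem:sun}, i.e.\ precisely the self-conjugacy relation $u=u'$ that defines $A_o(2,F)$ as a quantum subgroup of $A_u(2,R)$ in Def.~\ref{def:AoF}. Since Sec.~\ref{rem:sun} already records (following \cite{Wan98b}, page 578) that $SU_q(2)\cong A_o(2,F)$ for exactly this $F$ with $F^*F=R'=\mathrm{diag}(q^{-1},q)$ matching our $R$ up to normalization, this identifies the universal object with $SU_q(2)$.

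The main obstacle I anticipate is bookkeeping: one must check that \eqref{eq:correctcond} is \emph{equivalent} to the orthogonality relation $u=u'$ and not merely implied by it, so that imposing it cuts the universal object of $\mathbf{C_u}(M_2(\C),R)$ down to exactly $A_o(2,F)$ and no smaller. This requires verifying both that $(A_u(2,R),U_2)$ quotients onto $(A_o(2,F),u)$ compatibly with \eqref{eq:correctcond}, and that any object satisfying \eqref{eq:correctcond} factors through this quotient --- in other words, that \eqref{eq:correctcond} holds for $A_o(2,F)$ and that the relation it encodes is the defining relation $u=F\bar uF^{-1}$. The matrix $\sigma_2$ and the diagonal $R$ interact through $R^{1/2}\sigma_2 R^{-1/2}$, which produces the off-diagonal $q^{\pm1/2}$ entries of $F$; carefully tracking these factors (and the overall normalization of $R$, which is irrelevant by the remark following \eqref{eq:complexCstar}) is the crux. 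Once the relation is shown to be exactly $u=u'$, universality is inherited from Prop.~\ref{prop:QUGofMnC}: the universal object is the quotient of $A_u(2,R)$ by the Woronowicz $C^*$-ideal generated by the entries of $u-u'$, which is $A_o(2,F)\cong SU_q(2)$. Finally, setting $q=1$ gives $F=\sigma_2$, and since $A_o(2,\sigma_2)$ is a Kac algebra whose abelianization recovers $SU(2)$ (indeed $SU_1(2)=SU(2)$), we recover the classical statement that the unitary group of $\Q$ is $SU(2)$.
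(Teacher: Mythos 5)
Your proposal is correct and follows essentially the same route as the paper: both reduce to $A_u(2,R)$ via Prop.~\ref{prop:QUGofMnC}, unpack \eqref{eq:correctcond} entrywise using $\sigma(m)=\sigma_2\bar m\sigma_2$ and conjugation by $R^{\pm 1/2}$, and conclude by the inherited universality. The only divergence is the final identification: the paper writes $U=\bigl(\begin{smallmatrix}a&-qc^*\\ c&a^*\end{smallmatrix}\bigr)$ and checks by hand that biunitarity forces the $SU_q(2)$ relations of \cite[Sec.~4.1.4]{KS97}, whereas you recognize the constraint as the defining relation $u=F\bar uF^{-1}$ of $A_o(2,F)$ and quote the isomorphism $A_o(2,F)\cong SU_q(2)$ already recalled in Sec.~\ref{rem:sun} --- both are valid, and yours correctly flags (and resolves) the need for the constraint to be \emph{equivalent} to, not merely implied by, $u=u'$.
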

\begin{proof}
If $(Q,U)$ is an object in the above-mentioned category, $U=(u_{ij})\in M_2(Q)$, 
from \eqref{eq:toocomm} we get:
$$
(\sigma\otimes *)(U)=
\bigg(\!\begin{array}{rr}
u^*_{22} & -u^*_{21} \\
-u^*_{12} & u^*_{11}
\end{array}\!\bigg)
=R^{\frac{1}{2}}UR^{-\frac{1}{2}}=
\bigg(\!\begin{array}{cc}
u_{11} & q^{-1}u_{12} \\
qu_{21} & u_{22}
\end{array}\!\bigg) \;.
$$
Hence
$$
U=\bigg(\!\begin{array}{cc}
a & -qc^* \\
c & a^*
\end{array}\!\bigg) \;.
$$
By Theorem \ref{thm:QUGA}, $U$ satisfies the relations of $A_u(2,R)$. It is an easy
computation to check that these relations are satisfied if and only if $a,c$ satisfy
the defining relations of $SU_q(2)$, in the notations of \cite[Sec.~4.1.4]{KS97}.
The remaining conditions
$U^t(R\hspace{1pt}\bar{U}R^{-1})=(R\hspace{1pt}\bar{U}R^{-1})U^t=\mathbb{I}_2$
are automatically satisfied. It follows from the above discussion that $SU_q(2)$
-- with generators denoted $a',c'$ -- is an object in the category, and there is
a unique $C^*$-homomorphism from $SU_q(2)$ to $Q$ intertwining the corepresentation,
given by $a'\mapsto a$ and $c'\mapsto c$. This proves universality.
\end{proof}

\subsubsection{Definition and existence of quantum unitary group of real $C^*$-algebras}\label{sebsec:two}

Based on the observations made in the last section, we  modify \eqref{eq:toocomm} to define the quantum family of unitaries of a finite dimensional real $ C^* $-algebra. In particular
the condition \eqref{eq:rela} is just \eqref{eq:based}.

\begin{df}\label{def:3.32}
Let $\A$ be a finite-dimensional real $C^*$-algebra, $F\in\A_{\C}$ an invertible element
and let $\pi:\A_{\C}\to\B\big(L^2(\A_{\C},  \varphi_{\sigma(F^*F)})  \big)$ be the GNS representation.
We denote by $\mathbf{C_{u,\R}}(\A,F)$ the category 
whose objects $(Q,U)$ are given by a (complex) unital $C^*$-algebra $Q$ and
a unitary element $U\in\pi(\A_{\C})\otimes Q$ such that:
\begin{equation}\label{eq:rela}
(\sigma\otimes *)(U)=F^{-1}UF \;,
\end{equation}
A morphism $\phi:(Q,U)\to (Q',U')$ is a $C^*$-homomorphisms such that $(\id\otimes\phi)(U)=U'$.

We call $\mathbf{C_{u,\R}}(\A,F)$ the category of \emph{quantum families of
$F$-unitaries} of $\A$.
\end{df}

The condition \eqref{eq:rela} is inspired by \eqref{eq:correctcond} (where $F=R^{-\frac{1}{2}}$).

As an immediate corollary to Lemma \ref{lemma:usualTr}, we obtain:
\begin{lemma} \label{newconditiongivesnewR}
For any $\A$ and $F$ there is a positive invertible element $R\in\A_{\C}$, given by
$R= \sigma ( F^*F )$, such that $\mathbf{C_{u,\R}}(\A,F)$ is a subcategory of $\mathbf{C_u}(\A_{\C},R)$.
\end{lemma}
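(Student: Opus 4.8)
The plan is to show that the invariance condition \eqref{eq:rela} defining $\mathbf{C_{u,\R}}(\A,F)$ implies both conditions (i) and (ii) of Definition \ref{def:QUG} for the state $\varphi_R$ with $R=\sigma(F^*F)$, which is exactly what it means for $(Q,U)$ to be an object of $\mathbf{C_u}(\A_\C,R)$. Since the morphisms in both categories are the same — namely $C^*$-homomorphisms $\phi$ satisfying $(\id\otimes\phi)(U)=U'$ — establishing this object-level containment is enough to conclude that $\mathbf{C_{u,\R}}(\A,F)$ is a (full) subcategory of $\mathbf{C_u}(\A_\C,R)$.

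First I would verify that $R=\sigma(F^*F)$ is indeed a positive invertible element of $\A_\C$. Positivity and invertibility of $F^*F$ are immediate from invertibility of $F$, and since $\sigma$ is an involutive $*$-automorphism of $\A_\C$ (it commutes with the involution, as recalled before Prop.~\ref{sigmaforrealalgebrasformulae}), it preserves positivity and invertibility; hence $R$ is positive and invertible, so the state $\varphi_R$ and the GNS space $L^2(\A_\C,\varphi_R)$ appearing in Definition \ref{def:QUG} are well defined and agree with the data in Definition \ref{def:3.32}.

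The core of the argument is then just an invocation of Lemma \ref{lemma:usualTr}. Indeed, condition \eqref{eq:rela} is literally the hypothesis \eqref{eq:based} of that lemma, and the lemma's conclusion is precisely that $\varphi_R$ is preserved by $\mathrm{Ad}_U$ and $\varphi_{R^{-1}}$ is preserved by $\mathrm{Ad}_{U^*}$, for $R=\sigma(F^*F)$. These are exactly conditions (i) and (ii) of Definition \ref{def:QUG}. Therefore every object $(Q,U)$ of $\mathbf{C_{u,\R}}(\A,F)$ is an object of $\mathbf{C_u}(\A_\C,R)$, and since the two categories share the same notion of morphism, the inclusion of object classes upgrades to an inclusion of categories.

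I do not expect any serious obstacle here: this statement is explicitly flagged as an \emph{immediate corollary} to Lemma \ref{lemma:usualTr}, and the only content beyond citing the lemma is the bookkeeping observation that the morphism sets coincide and that $R$ has the required positivity. The one point requiring a line of care is confirming that the GNS Hilbert spaces and representations $\pi$ in the two definitions genuinely match, so that ``object of $\mathbf{C_u}(\A_\C,R)$'' and ``object of $\mathbf{C_{u,\R}}(\A,F)$'' refer to unitaries in the same algebra $\pi(\A_\C)\otimes Q$; this is transparent once one sets $R=\sigma(F^*F)$.
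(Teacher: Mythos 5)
Your proposal is correct and follows exactly the route the paper intends: the paper offers no separate proof, simply declaring the lemma an immediate corollary of Lemma \ref{lemma:usualTr}, and your argument — identifying \eqref{eq:rela} with \eqref{eq:based}, reading off conditions (i) and (ii) of Definition \ref{def:QUG} from that lemma's conclusion, and checking that $R=\sigma(F^*F)$ is positive invertible and that the morphism sets agree — is precisely the bookkeeping the authors leave implicit.
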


The next lemma will be needed later to prove that if the
universal object in $\mathbf{C_{u,\R}}(\A,F)$ exists, then it is a CQG. 

\begin{lemma} \label{realalgebraworidealoctober}
If $Q$ is any CQG with a corepresentation $U\in\A_{\C}\otimes Q$,
the ideal $I_F$ generated by the relation \eqref{eq:rela} is a Woronowicz $C^*$-ideal.
\end{lemma}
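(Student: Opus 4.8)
The plan is to verify directly the defining property $\Delta(I_F)\subset\ker(\pi_{I_F}\otimes\pi_{I_F})$, exploiting that the relation \eqref{eq:rela} is the difference of two auxiliary elements that each behave like a corepresentation. Set
$$
W:=(\sigma\otimes *)(U), \qquad V:=F^{-1}UF, \qquad T:=W-V\in\A_\C\otimes Q,
$$
so that, after fixing a linear basis $\{c_l\}$ of the finite-dimensional space $\A_\C$ and writing $T=\sum_l c_l\otimes t_l$, the ideal $I_F$ is the closed two-sided ideal of $Q$ generated by the components $\{t_l\}$ (equivalently, by the slices $(\omega\otimes\id)(T)$, $\omega\in\A_\C^*$).

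First I would check that both $V$ and $W$ satisfy the corepresentation identity. For $V$ this is immediate: since $F,F^{-1}\in\A_\C$ act on the first leg, inserting $FF^{-1}=1$ into $(\id\otimes\Delta)(U)=U_{(12)}U_{(13)}$ gives $(\id\otimes\Delta)(V)=F^{-1}U_{(12)}U_{(13)}F=V_{(12)}V_{(13)}$. For $W$ one uses that $\sigma$ is an antilinear algebra automorphism of $\A_\C$ with $\sigma(ab)=\sigma(a)\sigma(b)$, that $\Delta$ is a $*$-homomorphism (so $\Delta\circ *=(*\otimes *)\circ\Delta$), and that $\sigma$ commutes with $\Delta$ as they act on disjoint legs; then
$$
(\id\otimes\Delta)(W)=(\sigma\otimes *\otimes *)\big(U_{(12)}U_{(13)}\big)=W_{(12)}W_{(13)}.
$$
This is the step I expect to demand the most care, since the antilinearity of $\sigma$ must be tracked precisely against the involution $*\otimes *$ on $Q\otimes Q$.

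Granting this, the key identity is the telescoping
$$
(\id\otimes\Delta)(T)=W_{(12)}W_{(13)}-V_{(12)}V_{(13)}
=W_{(12)}T_{(13)}+T_{(12)}V_{(13)}.
$$
Applying a slice $(\omega\otimes\id\otimes\id)$ in the first ($\A_\C$) leg and using $(\omega\otimes\id\otimes\id)\circ(\id\otimes\Delta)=\Delta\circ(\omega\otimes\id)$, I obtain $\Delta\big((\omega\otimes\id)(T)\big)$ as a sum of two finite-rank terms. In $W_{(12)}T_{(13)}$ the components $t_l$ of $T$ sit in the third leg, so after slicing that term lies in $Q\otimes I_F$; in $T_{(12)}V_{(13)}$ the $t_l$ sit in the second leg, so that term lies in $I_F\otimes Q$. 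Hence every generator of $I_F$ is sent by $\Delta$ into $I_F\otimes Q+Q\otimes I_F\subset\ker(\pi_{I_F}\otimes\pi_{I_F})$.

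Finally I would pass from generators to the whole ideal: the preimage $\Delta^{-1}\big(\ker(\pi_{I_F}\otimes\pi_{I_F})\big)$ is a closed two-sided ideal of $Q$, because $\ker(\pi_{I_F}\otimes\pi_{I_F})$ is an ideal of $Q\otimes Q$ and $\Delta$ is a homomorphism. By the previous step it contains all generators of $I_F$, and being closed it therefore contains $I_F$ itself. This is exactly the statement $\Delta(I_F)\subset\ker(\pi_{I_F}\otimes\pi_{I_F})$, so $I_F$ is a Woronowicz $C^*$-ideal.
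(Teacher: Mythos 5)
Your proof is correct and follows essentially the same route as the paper's: you decompose $T=(\sigma\otimes *)(U)-F^{-1}UF$ into two factors each satisfying the corepresentation identity and then telescope the difference of products, which is exactly the paper's argument (the paper writes the telescoping as $ac-bd=(a-b)c+b(c-d)$ and applies $\id\otimes\pi_{I_F}\otimes\pi_{I_F}$ before slicing, but this is only a cosmetic difference). Your explicit final step, observing that $\Delta^{-1}\bigl(\ker(\pi_{I_F}\otimes\pi_{I_F})\bigr)$ is a closed two-sided ideal and hence contains all of $I_F$ once it contains the generators, is a small tidying-up of a point the paper leaves implicit.
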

\begin{proof}
In this proof, we write explicitly $F\otimes 1$ for the element in $\A_{\C}\otimes Q$ (instead of $F$,
with the usual identification of $\A_{\C}$ with its image in $\A_{\C}\otimes Q$) to make the proof
more transparent. Let
$$
T:=(\sigma\otimes *)(U)-(F^{-1}\otimes 1)U(F\otimes 1) \;\in\A_{\C}\otimes Q \;.
$$
The ideal $I$ is generated by
$$
t_\varphi= (\varphi\otimes\id)(T) \;,
\qquad
\varphi\in (\A_{\C})^*\;.
$$
Let $\pi_I:Q\to Q/I$ be the quotient map.
We need to prove that
$$
(\pi_I\otimes\pi_I)\Delta(t_\varphi)=
(\varphi\otimes\pi_I\otimes\pi_I)(\id\otimes\Delta)(T)
$$
is zero for all $\varphi$. Hence,  it is enough to prove that
$(\id\otimes\pi_I\otimes\pi_I)(\id\otimes\Delta)(T)=0$.

We have
$$
(\id\otimes\Delta)(\sigma\otimes *)(U)=
(\sigma\otimes *\otimes *)U_{(12)}U_{(13)}
=(\sigma\otimes *\otimes *)(U_{(12)})
\cdot
(\sigma\otimes *\otimes *)(U_{(13)}).
$$
Moreover, we notice that
\begin{multline*}
(\id\otimes\Delta)(F^{-1}\otimes 1)U(F\otimes 1) =
(F^{-1}\otimes 1\otimes 1)(\id\otimes\Delta)(U)(F\otimes 1\otimes 1)
\\
=
(F^{-1}\otimes 1\otimes 1)U_{(12)}(F\otimes 1\otimes 1)
(F^{-1}\otimes 1\otimes 1)U_{(13)}(F\otimes 1\otimes 1)
\\
=\big\{(F^{-1}\otimes 1)U(F\otimes 1)\big\}_{(12)}
\big\{(F^{-1}\otimes 1)U(F\otimes 1)\big\}_{(13)} \;.
\end{multline*}
Since $id\otimes\pi_I\otimes\pi_I$ is a $C^*$-algebra
morphism, it is enough to prove that the elements
$$
(\sigma\otimes *\otimes *)(U_{(12)})-\big\{(F^{-1}\otimes 1)U(F\otimes 1)\big\}_{(12)}
$$
and
$$
(\sigma\otimes *\otimes *)(U_{(13)})-\big\{(F^{-1}\otimes 1)U(F\otimes 1)\big\}_{(13)}
$$
are in the kernel of $id\otimes\pi_I\otimes\pi_I$ (if $a-b$ and $c-d$ are in the kernel of a morphism, then
$ac-bd=(a-b)c+b(c-d)$ is in the kernel too).
But this follows easily from \eqref{eq:rela}. Indeed:
\begin{multline*}
(\id\otimes\pi_I\otimes\pi_I)(\sigma\otimes *\otimes *)(U_{(12)}) =
\big\{(\id\otimes\pi_I)(\sigma\otimes *)(U)\big\}_{(12)} \\[2pt]
=\big\{(\id\otimes\pi_I)(F^{-1}\otimes 1)U(F\otimes 1)\big\}_{(12)} \\
=(\id\otimes\pi_I\otimes\pi_I)(F^{-1}\otimes 1\otimes 1)U_{(12)}(F\otimes 1\otimes 1).
\end{multline*}
The other equality  for $(\sigma\otimes *\otimes *)(U_{(13)})$ follows similarly.
This concludes the proof.
\end{proof}

\begin{thm}
The universal object of $\mathbf{C_{u,\R}}(\A,F)$, denoted by $Q_{u,\R}(\A,F)$, exists
and is the CQG given by $Q_u(\A_{\C},R)/I_F$, with $\sigma(R)=F^*F$ and $I_F$ the Woronowicz $C^*$-ideal generated by the relation \eqref{eq:rela}.
\end{thm}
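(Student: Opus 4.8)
The plan is to establish universality of the candidate object $Q_u(\A_\C,R)/I_F$ in $\mathbf{C_{u,\R}}(\A,F)$ by first verifying that it is a legitimate object, then exhibiting the unique intertwining morphism for an arbitrary object, and finally upgrading the resulting universal algebra to a CQG. First I would observe, via Lemma~\ref{newconditiongivesnewR}, that any object $(Q,U)$ of $\mathbf{C_{u,\R}}(\A,F)$ is automatically an object of $\mathbf{C_u}(\A_\C,R)$ with $R=\sigma(F^*F)$, since relation \eqref{eq:rela} forces the $\mathrm{Ad}$-invariance conditions (i)--(ii) of Def.~\ref{def:QUG} by Lemma~\ref{lemma:usualTr}. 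Hence by the universality of $Q_u(\A_\C,R)$ (Theorem~\ref{thm:QUGA}) there is a unique $C^*$-homomorphism $\psi:Q_u(\A_\C,R)\to Q$ with $(\id\otimes\psi)(U_{\mathrm{univ}})=U$, where $U_{\mathrm{univ}}$ is the universal corepresentation.

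Next I would show that $\psi$ factors through the quotient by $I_F$. The point is that $(Q,U)$ satisfies \eqref{eq:rela}, so applying $\id\otimes\psi$ to the defining element $T_{\mathrm{univ}}=(\sigma\otimes *)(U_{\mathrm{univ}})-(F^{-1}\otimes 1)U_{\mathrm{univ}}(F\otimes 1)$ gives the corresponding element $T$ for $(Q,U)$, which vanishes. Since $I_F$ is by definition generated by the slices $(\varphi\otimes\id)(T_{\mathrm{univ}})$, it follows that $I_F\subset\ker\psi$, and thus $\psi$ descends to a unique $\bar\psi:Q_u(\A_\C,R)/I_F\to Q$ intertwining the corepresentations. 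To complete universality I must also check that the candidate $(Q_u(\A_\C,R)/I_F,\bar U)$ is itself an object of $\mathbf{C_{u,\R}}(\A,F)$, i.e.\ that the image $\bar U$ of $U_{\mathrm{univ}}$ genuinely satisfies \eqref{eq:rela} in the quotient: this is exactly the statement that $T_{\mathrm{univ}}$ lands in $\A_\C\otimes I_F$, which holds by construction of $I_F$ as the ideal generated by the relation, so $\bar U$ is a well-defined object and $\bar\psi$ is the unique morphism to any $(Q,U)$.

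Finally I would argue that the universal object carries a CQG structure. The free product $Q_u(\A_\C,R)=*_k A_u(n_k,R_k^t)$ is a CQG and $U_{\mathrm{univ}}$ is a unitary corepresentation, so Lemma~\ref{realalgebraworidealoctober} applies to show that $I_F$ is a Woronowicz $C^*$-ideal. By the remark following Def.~3 on Woronowicz $C^*$-ideals, the quotient $Q_u(\A_\C,R)/I_F$ inherits a coproduct making it a CQG, and $\bar U$ is a corepresentation with respect to it. I would conclude that $Q_{u,\R}(\A,F):=Q_u(\A_\C,R)/I_F$ is the universal object of $\mathbf{C_{u,\R}}(\A,F)$ and is a CQG. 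The main obstacle I anticipate is the careful bookkeeping in verifying that $I_F\subset\ker\psi$ and that $\bar U$ still satisfies \eqref{eq:rela} after passing to the quotient --- the two directions must be kept distinct, since one shows every object receives a morphism and the other shows the quotient itself is an object --- but these reduce to functoriality of the slice maps $(\varphi\otimes\id)$ together with the definition of $I_F$, so no genuinely new computation beyond Lemmas~\ref{lemma:usualTr} and~\ref{realalgebraworidealoctober} should be required.
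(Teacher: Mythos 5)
Your proposal is correct and follows essentially the same route as the paper: pass to $\mathbf{C_u}(\A_\C,R)$ via Lemma~\ref{newconditiongivesnewR}, use the universality of $Q_u(\A_\C,R)$ to get the unique morphism, factor it through $I_F$ because the target satisfies \eqref{eq:rela}, and invoke Lemma~\ref{realalgebraworidealoctober} for the CQG structure. Your version is somewhat more explicit than the paper's (in particular about why $I_F\subset\ker\psi$ and why the quotient is itself an object), but there is no substantive difference in the argument.
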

\begin{proof}
Clearly $Q_u(\A_{\C},R)/I_F$ is an object in the category $\mathbf{C_{u,\R}}(\A,F)$. On the other hand, by Lemma \ref{newconditiongivesnewR},
every object in $\mathbf{C_{u,\R}}(\A,F)$ is an element $(Q,U)\in \mathbf{C_u}(\A_{\C},R)$
satisfying \eqref{eq:rela}. 
Since $Q_u(\A_{\C},R)$ is universal in  $ \mathbf{C_u}(\A_{\C},R),$  there is a unique morphism 
$\phi:Q_u(\A_{\C},R)\to Q$ in the category $ \mathbf{C_u}(\A_{\C},R)$. Since $Q$
satisfies \eqref{eq:rela}, 
there exists a map $ \psi: Q_u(\A_{\C},R)\to Q_u(\A_{\C},R)/I_F\to Q,$ such that  $ \phi = \psi \circ \pi_{I_F} ,$  $ \pi_{I_F} $ being the quotient map from $Q_u(\A_{\C},R) $ to $Q_u(\A_{\C},R)/I_F.$ Suppose that there exists another morphism $ \psi^{\prime} $ from $ Q_u(\A_{\C},R) $ to $ Q_u(\A_{\C},R)/I_F\to Q $ 
in the category $\mathbf{C_{u,\R}}(\A,F).$ Then $ \psi^{\prime} \circ \pi_{I_F} $ is another morphism from $ Q_u(\A_{\C},R)\to Q$ in the category $ \mathbf{C_u}(\A_{\C},R),$ contradicting the uniqueness of $ \phi. $ This proves that $Q_u(\A_{\C},R)/I_F$ is the universal object in $\mathbf{C_{u,\R}}(\A,F),$ which is a CQG due to Lemma \ref{realalgebraworidealoctober}.
\end{proof}

\subsubsection{Examples}\label{sebsec:three}

In Prop.~\ref{prop:qugH} we proved that when $R$ is the density matrix of the Powers state,
$Q_{u,\R}(\Q,R^{\frac{1}{2}})$ is the quantum group $SU_q(2)$. Let us extend the computation to $M_n(\R)$, $M_n(\C)$ and $M_n(\Q)$.

\begin{prop}\label{prop:FKH}
Let \mbox{$F=K\oplus H\bar K^{-1}$}, with \mbox{$H,K\in GL(n,\C)$}.
Then $Q_{u,\R}(M_n(\C),F)$ is the quotient of $A_u(n,K^*K)$ by the relation $uH=Hu$.
In particular for $H=\mathbb{I}_n$, $Q_{u,\R}(M_n(\C),F)\simeq A_u(n,R)$, with $R=K^*K$.
\end{prop}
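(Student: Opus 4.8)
The plan is to reduce everything to a free product via Theorem~\ref{thm:QUGA} and then impose the defining relation \eqref{eq:rela} block by block. First I would record the complexified data. By Proposition~\ref{sigmaforrealalgebrasformulae}(2) we have $\A_\C=M_n(\C)\oplus M_n(\C)$ with $\sigma(m_1\oplus m_2)=\bar m_2\oplus\bar m_1$, so that $\sigma$ interchanges the two blocks. From $F=K\oplus H\bar K^{-1}$ a direct computation gives $F^*F=K^*K\oplus (K^t)^{-1}H^*H\bar K^{-1}$, whence $R:=\sigma(F^*F)=R_1\oplus R_2$ with $R_1^t=(K^t)^{-1}H^*H\bar K^{-1}$ and $R_2^t=K^*K$. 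By Theorem~\ref{thm:QUGA}, $Q_u(\A_\C,R)=A_u(n,R_1^t)*A_u(n,R_2^t)$, carrying the corepresentation $U=u^{(1)}\oplus u^{(2)}$, where $u^{(k)}$ is the fundamental biunitary of the $k$-th free factor; and, recalling that $Q_{u,\R}(M_n(\C),F)=Q_u(\A_\C,R)/I_F$, it only remains to identify this quotient.

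Next I would expand \eqref{eq:rela}. Since $\sigma$ swaps the blocks, $(\sigma\otimes *)(U)=\overline{u^{(2)}}\oplus\overline{u^{(1)}}$ (with $\overline{\phantom{u}}$ the entrywise adjoint, as in the paper's convention), while $F^{-1}UF=K^{-1}u^{(1)}K\oplus \bar K H^{-1}u^{(2)}H\bar K^{-1}$. Equating the two blocks yields (a) $\overline{u^{(2)}}=K^{-1}u^{(1)}K$ and (b) $\overline{u^{(1)}}=\bar K H^{-1}u^{(2)}H\bar K^{-1}$. Relation (a) is equivalent to $u^{(1)}=K\overline{u^{(2)}}K^{-1}$, so in the quotient the first free factor is completely eliminated: putting $u:=u^{(2)}$, the algebra is generated by the single biunitary $u$, which already satisfies the relations of $A_u(n,R_2^t)=A_u(n,K^*K)$.

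Finally I would pin down the relations satisfied by $u$. Substituting $u^{(1)}=K\overline{u}K^{-1}$, so that $\overline{u^{(1)}}=\bar K u\bar K^{-1}$, into (b) collapses it to $u=H^{-1}uH$, i.e.\ the relation $uH=Hu$ of the statement. The remaining constraints arise from demanding that the eliminated matrix $w:=K\overline{u}K^{-1}$ actually obey the defining relations of $A_u(n,R_1^t)$: a short computation shows the unitarity $ww^*=w^*w=\mathbb{I}_n$ follows automatically from the $A_u(n,K^*K)$-relations on $u$, whereas the biunitarity $w^t(R_1^t\overline{w}R_1^{-t})=(R_1^t\overline{w}R_1^{-t})w^t=\mathbb{I}_n$, using $R_1^t=(K^t)^{-1}H^*H\bar K^{-1}$ and $u^*=u^{-1}$, reduces exactly to $u(H^*H)=(H^*H)u$. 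One then checks that $u^{(2)}\mapsto u$, $u^{(1)}\mapsto K\overline{u}K^{-1}$ and $u\mapsto u^{(2)}$ define mutually inverse morphisms, identifying the quotient with $A_u(n,K^*K)$ modulo these relations. The main obstacle is precisely reconciling the two surviving relations $uH=Hu$ and $u(H^*H)=(H^*H)u$ with the single relation $uH=Hu$ claimed in the statement. This is where I expect to use that everything lives inside the $C^*$-algebra $M_n(Q)$: for the $H$ relevant here (positive, hence normal), Fuglede's theorem turns $uH=Hu$ into $uH^*=H^*u$, so $u(H^*H)=(H^*H)u$ is automatic and the biunitarity constraint is redundant, giving precisely $A_u(n,K^*K)/(uH=Hu)$. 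For $H=\mathbb{I}_n$ both relations are vacuous and we recover $A_u(n,K^*K)=A_u(n,R)$ with $R=K^*K$.
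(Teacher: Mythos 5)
Your argument reaches the right answer and is structurally sound, but it takes a different and somewhat heavier route than the paper, and it contains one step whose justification is wrong as written. The paper does not pass through Theorem~\ref{thm:QUGA} at all: it works directly with an arbitrary object $(Q,U)$ of $\mathbf{C_{u,\R}}(M_n(\C),F)$, which by Definition~\ref{def:3.32} is just a unitary $U=u_1\oplus u_2$ satisfying \eqref{eq:rela}. Expanding \eqref{eq:rela} exactly as you do forces $u_1=K\bar u_2K^{-1}$ and $Hu_2=u_2H$, and then unitarity of $U$ is literally the statement that $u_2$ and $K\bar u_2K^{-1}$ are both unitary --- which by Definition~\ref{def:AoF} is the presentation of $A_u(n,K)\simeq A_u(n,K^*K)$. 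No biunitarity relations ever have to be checked separately. Your route instead realizes $Q_{u,\R}(M_n(\C),F)$ as $\bigl(A_u(n,R_1^t)*A_u(n,R_2^t)\bigr)/I_F$ and must then verify that the $A_u(n,R_1^t)$-relations imposed on the eliminated block $w=K\bar uK^{-1}$ are redundant; your computations of $R_1^t$, $R_2^t$ and of the reduction of the biunitarity of $w$ to $u(H^*H)=(H^*H)u$ are all correct, so the approach works, but it buys nothing over the direct one and creates the extra reconciliation problem you then have to solve.

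That reconciliation step is where the flaw lies. You assert that ``the $H$ relevant here'' is positive and invoke Fuglede's theorem to pass from $uH=Hu$ to $uH^*=H^*u$. But the proposition allows an arbitrary $H\in GL(n,\C)$ (the remark following it stresses that $F=K\oplus H\bar K^{-1}$ is the \emph{general} invertible element of $M_n(\C)\oplus M_n(\C)$), so $H$ need be neither positive nor normal, and Fuglede does not apply. Fortunately the implication you need is elementary and holds for every $H$: taking adjoints of the matrix identity $uH=Hu$ in $M_n(Q)$ gives $H^*u^*=u^*H^*$, and since $u^*=u^{-1}$ this yields $uH^*=H^*u$, whence $u(H^*H)=H^*uH=(H^*H)u$. (Alternatively, the redundancy is guaranteed a priori by Lemma~\ref{lemma:usualTr} and Lemma~\ref{newconditiongivesnewR}: any unitary satisfying \eqref{eq:rela} automatically lies in $\mathbf{C_u}(\A_\C,R)$ with $R=\sigma(F^*F)$, so the state-preservation relations cannot cut the algebra down further.) With that repair your proof is complete; as written, the justification of the key redundancy claim rests on a false premise.
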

\begin{proof}
Let $(Q,U)$ be any object of $\mathbf{C_{u,\R}}(M_n(\C),F)$,
with $U=u_1\oplus u_2$ and $u_i\in M_n(\C)\otimes Q$ for $i=1,2$.
Condition \eqref{eq:rela} is equivalent to
$$
\bar u_2\oplus\bar u_1=K^{-1}u_1K\oplus \bar KH^{-1}u_2H\bar K^{-1} \;,
$$
that is $u_1=K\bar u_2K^{-1}$ and
$\bar u_1=\bar KH^{-1}u_2H\bar K^{-1}$. 
Conjugating the second equation we see that
$u_1=K\bar u_2K^{-1}=K\overline{H^{-1}u_2H}K^{-1}$,
that implies $H^{-1}u_2H=u_2$.
Thus $Q$ is generated by the
matrix entries $v_{ij}$ of $u_2$ with the condition that
$$
U=K\hspace{1pt}\bar{u}_2K^{-1}\oplus u_2
$$
and that $u_2\in M_n(Q)$ commutes with $H$.
The operator $U$ is unitary
if and only if both $u_2$ and $K\hspace{1pt}\bar{u}_2K^{-1}$
are unitary, i.e.~by Def.~\ref{def:AoF} the elements
$v_{ij}$ satisfy the defining relations of $A_u(n,R)$,
with $R=K^*K$.

It is clear from the above discussion that $A_u(n,R)/I_H$,
where $I_H$ is the ideal generated by the relation $uH=Hu$
and $u=(u_{ij})$ the canonical
generators, is an object in the above category. Moreover,
there is a unique $C^*$-homorphism $A_u(n,R)/I_H\to Q$
intertwining the corepresentations, given by $u_{ij}\mapsto v_{ij}$.
This proves that the object $A_u(n,R)/I_H$ is universal.
\end{proof}

\noindent
Notice that in previous proposition we consider the most general invertible \mbox{$F\in M_n(\C)\oplus M_n(\C)$},
that without loss of generality can be written as $F=K\oplus H\bar K^{-1}$.

\begin{prop}
$Q_{u,\R}(M_n(\R),F)$ is isomorphic to $A_o(n,F)$.
$Q_{u,\R}(M_k(\Q),F)$ is isomorphic to $A_o\bigl(2k,F(\sigma_2\otimes \mathbb{I}_k)\bigr)$.
In particular, $Q_{u,\R}(M_n(\R),I)$ is the free quantum orthogonal group $A_o(n)$
and $Q_{u,\R}(M_k(\Q),I)$ is the free quantum symplectic group $A_{sp}(k)$.
\end{prop}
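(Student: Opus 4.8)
The plan is to treat the two algebras simultaneously and reduce each to the presentation of $A_o(N,G)$ coming from Definition~\ref{def:AoF}. Write $N=n$ in the real case and $N=2k$ in the quaternionic case, so that $\A_\C=M_N(\C)$ in both. By Proposition~\ref{sigmaforrealalgebrasformulae} the antilinear automorphism has the uniform shape $\sigma(m)=J\bar mJ$, with $J=\mathbb{I}_n$ for $\A=M_n(\R)$ and $J=\sigma_2\otimes\mathbb{I}_k$ for $\A=M_k(\Q)$; in either case $J$ is a constant scalar matrix with $J^2=\mathbb{I}_N$, $J^*=J=J^{-1}$ and $J^t=\pm J$. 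Setting $G:=FJ$, the whole point is to show $Q_{u,\R}(\A,F)\cong A_o(N,G)$, which for $J=\mathbb{I}_n$ is $A_o(n,F)$ and for $J=\sigma_2\otimes\mathbb{I}_k$ is $A_o\bigl(2k,F(\sigma_2\otimes\mathbb{I}_k)\bigr)$; specializing to $F=\mathbb{I}$ then yields $A_o(n)$ and, by Remark~\ref{rem:sp}, $A_{sp}(k)$.

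First I would pin down the ambient quantum group. By the existence theorem above, $Q_{u,\R}(\A,F)=Q_u(M_N(\C),R)/I_F$ with $R=\sigma(F^*F)=J\,\overline{F^*F}\,J$, which is positive and invertible since $\sigma$ is an (antilinear) $*$-automorphism. Proposition~\ref{prop:QUGofMnC} identifies $Q_u(M_N(\C),R)$ with $A_u(N,R^t)$, and a short computation using $(F^*F)^t=F^t\bar F$ (the sign of $J^t$ being immaterial, as it enters squared) gives
\[
R^t=(J\,\overline{F^*F}\,J)^t=J\,\overline{(F^*F)^t}\,J=J\,F^*F\,J=(FJ)^*(FJ)=G^*G .
\]
Hence $A_u(N,R^t)\cong A_u(N,G)$ on the same generators $u=(u_{ij})$, by the modulus-independence of $A_u(N,\,\cdot\,)$ recalled after Definition~\ref{def:AoF}.

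The heart of the argument is to turn relation~\eqref{eq:rela} into a matrix identity. Using the isometry $L_R$ of Proposition~\ref{prop:QUGofMnC}, I would identify the unitary $U\in\pi(M_N(\C))\otimes Q$ with its coefficient matrix $u\in M_N(Q)$ via $U=\sum_{ij}\pi_R(e_{ij})\otimes u_{ij}$, so that $\pi_R(m)$ acts as $m$ on the first leg. Since the $*$-operation on $Q$ sends this matrix entrywise to $\bar u$, while the constant matrices $J$ and $F$ pass unchanged through the $Q$-leg, one computes $(\sigma\otimes *)(U)\leftrightarrow J\bar uJ$ and $F^{-1}UF\leftrightarrow F^{-1}uF$, so that \eqref{eq:rela} reads $J\bar uJ=F^{-1}uF$. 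Substituting $F=GJ$, $F^{-1}=JG^{-1}$ and cancelling the outer $J$'s (using $J^2=\mathbb{I}_N$) collapses this to the single relation $u=G\bar uG^{-1}$, which is exactly the relation $u=u'$ defining $A_o(N,G)$ in Definition~\ref{def:AoF}.

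Putting the pieces together, $I_F$ is precisely the ideal imposing $u=G\bar uG^{-1}$, whence $Q_{u,\R}(\A,F)$ is the quotient of $A_u(N,G)$ by this relation, namely $A_o(N,G)$; for $\A=M_n(\R)$ this is $A_o(n,F)$ and for $\A=M_k(\Q)$ it is $A_o\bigl(2k,F(\sigma_2\otimes\mathbb{I}_k)\bigr)$. The main obstacle I anticipate lies entirely in the third paragraph: carefully justifying the dictionary between the abstract condition~\eqref{eq:rela} and the clean matrix identity $J\bar uJ=F^{-1}uF$, keeping track of the antilinearity of $\sigma$ together with its $\sigma_2$-conjugation in the quaternionic case, the adjoint on $Q$, and the nontrivial density matrix $R$ in the GNS representation. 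Once the identification $\pi_R(m)\leftrightarrow m$ on the first leg via $L_R$ is in place, the scalar matrices $J$ and $F$ commute through the $Q$-leg and the rest is elementary matrix algebra.
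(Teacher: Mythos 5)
Your proposal is correct and follows essentially the same route as the paper: both reduce condition \eqref{eq:rela} to the matrix identity $\bar u=(FK)^{-1}u(FK)$ (your $u=G\bar uG^{-1}$ with $G=FK$) and then recognize the defining relations of $A_o(N,G)$ inside $A_u(N,G^*G)$. The paper states only that "the proposition follows easily" from this identity, whereas you usefully spell out the remaining steps, in particular the verification that $R^t=\sigma(F^*F)^t=G^*G$, which is needed to match the ambient quantum group with Definition~\ref{def:AoF}.
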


\begin{proof}
The involutions $\sigma$ for $M_n(\R)$ and $M_k(\Q)$ are given in Prop.~\ref{sigmaforrealalgebrasformulae}. $M_n(\R)$ and $M_k(\Q)$ are respectively the fixed point
real subalgebras of $M_n(\C)$ and $ M_{2k} ( \C ) $  for the automorphism $\sigma$ defined by $\sigma(a)=K\hspace{1pt}\bar{a}K^*,$ where $K=\mathbb{I}_n$ for $ M_n ( \C ) $ and $K=\sigma_2\otimes \mathbb{I}_k$ for $M_k(\Q).$
Here, as usual, we identify $M_{2k}(\C)$ with $M_2(\C)\otimes M_k(\C)$.
In both the cases, the condition 
\eqref{eq:rela} becomes $\bar U=(FK)^{-1}U(FK),$ using which the proposition follows easily.
\end{proof}

\noindent
Like the complex case, if we have a direct sum of algebras we get a free product
of CQGs, i.e.%
$$
Q_{u,\R}(\A_1\oplus\A_2,R_1\oplus R_2,F_1\oplus F_2)=Q_{u,\R}(\A_1,R_1,F_1)*Q_{u,\R}(\A_2,R_2,F_2) \;.
$$
The proof is analogous to the one of Theorem \ref{thm:QUGA}, and we omit it.

We conclude this section by identifying the quantum group of unitaries of the two algebras $\A_F= \C \oplus \Q \oplus M_3 ( \C ) $ and
$\A^{\mathrm{ev}}=\Q \oplus \Q \oplus M_4 ( \C )$ which appear in the noncommutative geometry formulation of the Standard Model.

\begin{cor}
The quantum unitary groups of the real $C^*$-algebras $ \C \oplus \Q \oplus M_3 ( \C ) $ and $ \Q \oplus \Q \oplus M_4 ( \C ) $ are $ C(U ( 1 )) \ast C ( SU ( 2 ) ) \ast A_u ( 3 ) $ and $ C ( SU ( 2 ) ) \ast C ( SU ( 2 ) ) \ast A_u ( 4 ) $, respectively.
\end{cor}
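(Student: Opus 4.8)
The plan is to reduce the statement to the computations of the quantum unitary groups of the simple summands carried out above, glued together by the free product formula for direct sums. Since $\A_F=\C\oplus\Q\oplus M_3(\C)$ and $\A^{\mathrm{ev}}=\Q\oplus\Q\oplus M_4(\C)$ are direct sums of algebras of the form $M_n(\Bbbk)$ with $\Bbbk\in\{\C,\Q\}$ (viewing $\C=M_1(\C)$ and $\Q=M_1(\Q)$), and since for a direct sum one has $Q_{u,\R}(\A_1\oplus\A_2)=Q_{u,\R}(\A_1)\ast Q_{u,\R}(\A_2)$ (the free product formula recalled above, here applied with the canonical trivial choice $F=\mathbb{I}$ on each block), it is enough to identify each factor separately and then take the free product.

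First I would dispose of the complex matrix blocks. By Prop.~\ref{prop:FKH} with $H=K=\mathbb{I}_n$ one has $Q_{u,\R}(M_n(\C),\mathbb{I})\simeq A_u(n,\mathbb{I}_n)=A_u(n)$, so the $M_3(\C)$ and $M_4(\C)$ summands contribute $A_u(3)$ and $A_u(4)$ respectively. The $\C=M_1(\C)$ summand contributes $A_u(1)$, which I would identify with $C(U(1))$: by Def.~\ref{def:WangA}, $A_u(1)=A_u(1,\mathbb{I}_1)$ is the universal unital $C^*$-algebra generated by a single unitary, which is commutative and isomorphic to $C(U(1))$; this is also consistent with the abelianization of $A_u(n)$ being $C(U(n))$ recalled above, which for $n=1$ is $A_u(1)$ itself.

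The quaternionic summands require slightly more care, and this is where I expect the one genuinely nontrivial point to lie. By the proposition above computing $Q_{u,\R}(M_k(\Q),\mathbb{I})$, each copy of $\Q=M_1(\Q)$ contributes the free quantum symplectic group $A_{sp}(1)=A_o(2,\sigma_2)$ (Remark~\ref{rem:sp}). The main obstacle is to show that this rank-one free symplectic group is already commutative, namely $A_{sp}(1)\simeq C(SU(2))$. I would argue as follows: the relation $u=F\bar uF^{-1}$ defining $A_o(2,F)$ is invariant under rescaling $F\mapsto\lambda F$ for $\lambda\neq 0$, so $A_o(2,\sigma_2)\simeq A_o(2,-i\,\sigma_2)$; and $-i\,\sigma_2$ is precisely the $q=1$ specialization of the matrix $F$ for which $SU_q(2)\simeq A_o(2,F)$ in Sec.~\ref{rem:sun}. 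Hence $A_{sp}(1)\simeq SU_1(2)=C(SU(2))$. Equivalently, this identification may be read off directly from Prop.~\ref{prop:qugH} at $q=1$, where the universal object is shown to be $SU_q(2)$ and the classical group $SU(2)$ is recovered.

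Finally, feeding these factors into the free product formula yields
$$
Q_{u,\R}(\C\oplus\Q\oplus M_3(\C))\simeq C(U(1))\ast C(SU(2))\ast A_u(3),\qquad
Q_{u,\R}(\Q\oplus\Q\oplus M_4(\C))\simeq C(SU(2))\ast C(SU(2))\ast A_u(4),
$$
which is the assertion of the corollary.
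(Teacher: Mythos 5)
Your proposal is correct and follows exactly the route the paper intends (the corollary is stated without proof as an immediate consequence of the free product formula and the preceding propositions computing $Q_{u,\R}$ of the matrix blocks). In particular, your identification of the quaternionic factor $A_{sp}(1)=A_o(2,\sigma_2)\simeq A_o(2,-i\sigma_2)\simeq SU_1(2)=C(SU(2))$ via the scale-invariance of the defining relation correctly supplies the one detail the paper leaves implicit, and it is consistent with Prop.~\ref{prop:qugH} at $q=1$.
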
 

\section{Quantum gauge group of a finite-dimensional spectral triple}\label{sec:4}
In this section we will define a quantum analogue of the gauge group \eqref{eq:one}.
As explained in the introduction, for physical reasons, we are interested
in the finite part of an almost commutative spectral triple: in this case
\eqref{eq:one} is the ``global'' gauge group of the theory.
We will focus, then, on finite-dimensional (real) spectral triples $(\A,\HH,D,J)$.
This means that $\HH$ is a finite-dimensional Hilbert space and $\A$ a finite-dimensional (possibly real) $C^*$-algebra. In the construction, the operator $D$ is irrelevant and we will assume that, even when $\A$ is real, $\HH$ is a complex Hilbert space (cf.~Sec.~\ref{sec:catCJ}).

We define the quantum gauge group using only quantum $R$-unitaries with $R\propto\mathbb{I}$. This is the most interesting case, since one gets
the classical gauge group as a quantum subgroup. The construction can be
adapted to the general case with minor modifications.

\begin{df}[\cite{CM08,walter}]
The gauge group of a finite-dimensional spectral triple $(\A,\HH,D,J)$ is the group
$$
\mc{G}(\A,J):=\{v:=uJuJ^{-1}:u\in U(\A)\} \;,
$$
with $U(\A)$ the unitary group of $\A$.
\end{df}

Now we propose %
a definition of the quantum gauge group for a finite-dimensional spectral triple over a real $C^*$-algebra. The complex case is easier, and
follows with some obvious changes. %
Throughout this section, we will use \eqref{eq:yyy} and \eqref{eq:real}, sometimes without mentioning it.

As explained in Sec.~\ref{sec:catCJ}, to any finite-dimensional real spectral triple
$(\A,\HH,D,\gamma,J)$ over a real $C^*$-algebra $\A$, we 
can associate a finite-dimensional real spectral triple $(\B,\HH,D,\gamma,J)$ over the
complex $C^*$-algebra $\B:=\A_{\C}/\ker\pi_{\C}$, where $\pi_{\C}$ is the $*$-representation \eqref{eq:piC}.

We need some preliminary observations.

\begin{lemma}\label{newlemmaforgaugegroupputbyme}
~\vspace{-5pt}
\begin{enumerate}\itemsep=0pt
\item Let us identify $a\in\A$ with $a\otimes_{\R}1$ in $\A_\C$. Then:
\begin{equation}\label{images are equal}
\B=
\pi_\C(\A_\C)
=\mathrm{Span}\bigl\{za\,:\,z\in\C,a\in\pi(\A)\big\} \;.
\end{equation}
\item For any $a\in\pi_\C(\A_\C)$ we have:
\begin{equation}\label{commute kore}
J\pi_\C(a)J^{-1}\in\pi_\C(\A_\C)' \;.
\end{equation}
\item
Let $U\in\A\ot Q$ be a unitary corepresentation of a CQG $Q$ on the Hilbert space $\HH$, say $U=\sum_{k=1}^ra_k\otimes q_k$ for some $r\geq 1$, $a_k\in\A$ and $q_k\in Q$, for all $k=1,\ldots,r$. Then
$$
\bar{U}=\sum\nolimits_{k=1}^r\,\bar{a_k}\ot q^*_k \;,
$$
where ``bar'' indicates the conjugated of a matrix in any fixed basis of $\HH$.
\end{enumerate}
\end{lemma}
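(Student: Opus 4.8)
The plan is to prove the three parts in order, as they are essentially independent computations.

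\medskip

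For part (1), I would start from the definition $\B=\pi_\C(\A_\C)$, which holds by construction since $\B=\A_\C/\ker\pi_\C$ and $\pi_\C$ descends to a faithful representation of this quotient. The content is therefore the second equality. A general element of $\A_\C$ is a finite $\R$-linear combination of elements $a\otimes_\R z$ with $a\in\A$, $z\in\C$, and by \eqref{eq:piC} we have $\pi_\C(a\otimes_\R z)=z\,\pi(a)$. Writing $z=x+iy$ and using that $\pi$ is $\R$-linear on $\A$, every such element lies in $\mathrm{Span}\{za:z\in\C,\,a\in\pi(\A)\}$, and conversely every $za$ with $a=\pi(a_0)$ equals $\pi_\C(a_0\otimes_\R z)$. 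This gives the two inclusions and hence the identity.

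\medskip

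For part (2), the key input is the first commutant relation in \eqref{eq:real}, namely $[a,JbJ^{-1}]=0$ for all $a,b\in\A$; in the finite-dimensional setting $\A=\pi(\A)$ is already represented on $\HH$, so this says $J\pi(b)J^{-1}\in\pi(\A)'$ for all $b\in\A$. I would then extend this from $\pi(\A)$ to $\pi_\C(\A_\C)$ using part (1): a general element of $\B$ has the form $\sum_k z_k a_k$ with $a_k\in\pi(\A)$, and by antilinearity of $J$ one computes $J(z_k a_k)J^{-1}=\bar z_k\, J a_k J^{-1}$, which is again a complex-linear combination of elements of $\pi(\A)'$. Since $\pi(\A)'=\pi_\C(\A_\C)'$ (both $\A$ and $\B$ have the same commutant, because $\B$ is spanned over $\C$ by $\pi(\A)$, and scalars are central), the conclusion \eqref{commute kore} follows.

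\medskip

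For part (3), the statement is a direct computation about what ``bar'' does to a corepresentation written in coordinates. Fixing a basis of $\HH$, the operator $U=\sum_k a_k\otimes q_k\in\A\otimes Q$ has matrix entries $U_{ij}=\sum_k (a_k)_{ij}\, q_k\in Q$, and the conjugate $\bar U$ is by definition the operator with entries $\bar U_{ij}=\sum_k \overline{(a_k)_{ij}}\, q_k^*=\sum_k (\bar a_k)_{ij}\, q_k^*$, where I use that conjugation of scalars commutes with taking the adjoint in $Q$ and that $\overline{(a_k)_{ij}}=(\bar a_k)_{ij}$. Reassembling, this is exactly $\sum_k \bar a_k\otimes q_k^*$, as claimed. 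The only subtlety worth a remark is the convention that conjugating a scalar $\lambda\in\C$ inside $\A\otimes Q$ is implemented on the $Q$-factor by the $*$-operation, which is consistent with the conventions already fixed in the paper for $\bar u=(u^*)^t$ in Definition \ref{def:WangA}.

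\medskip

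I do not expect any genuine obstacle: all three parts are formal consequences of the definitions together with the real-structure axioms \eqref{eq:real}. The one point requiring mild care is the passage from $\pi(\A)$ to its complex span $\pi_\C(\A_\C)$ in part (2) — one must check that $J$ conjugating a complex scalar produces the complex conjugate scalar (so that the commutant relation, which is $\C$-linear in the appropriate sense, is preserved), and that the commutants of $\A$ and $\B$ coincide. This is where part (1) does the real work, which is why establishing it first is the natural order.
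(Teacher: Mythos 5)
Your proposal is correct and follows essentially the same route as the paper: part (1) is read off from the definition of $\pi_\C$, part (2) combines the order-zero condition \eqref{eq:real} with the antilinearity of $J(\cdot)J^{-1}$ and extends by ($\C$-)linearity to $\pi_\C(\A_\C)=\mathrm{Span}_\C\,\pi(\A)$ (the paper does this by a direct element-by-element commutator computation, you phrase it via the equality of commutants $\pi(\A)'=\pi_\C(\A_\C)'$ — the same content), and part (3) is the identical coordinate computation in a fixed basis using $\bar U_{ij}=(U_{ij})^*$.
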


\begin{proof}
The equation \eqref{images are equal} follows from the definition of $\pi_\C.$
For \eqref{commute kore}, we will use \eqref{images are equal}.
Let $za$ and $wb$ two elements of $\pi_{\C}(\A_{\C})$, with $z,w\in\C$
and $a,b\in\pi(\A)$.
Now $J za J^{-1}=\bar{z}JaJ^{-1},$ since $J(\cdot)J^{-1}$ is antilinear. Hence, $JzaJ^{-1}wb=w\bar{z}JaJ^{-1}b=w\bar{z}bJaJ^{-1}=wbJzaJ^{-1},$ since $JzaJ^{-1}$ is a linear operator on $\B(\HH).$ So, $wb$
commutes with $J za J^{-1}$, and this is extended to
arbitrary elements $\sum_iz_ia_i$ and $\sum_iw_ib_i$
of $\pi_{\C}(\A_{\C})$ by bilinearity of the commutator,
proving \eqref{commute kore}.

To prove 3, we fix an orthonormal basis $\{e_i\}_{i=1}^n$ of $\HH$, with $n=\dim_{\C}(\HH)$. We denote by $e_{ij}\in\B(\HH)$ the operator defined by
$e_{ij}e_k=\delta_{jk}e_i$ and define the ``bar'' of an operator by
$\bar{e_{ij}}=e_{ij}$, extended antilinearly to $\B(\HH)$.
Thus $a_k=\sum_{i,j}c^{ij}_ke_{ij}$, for some $c^{ij}_k\in\C$, and
$U=\sum_{i,j}e_{i,j}\ot\sum_kc^{ij}_kq_k$. By definition, we have
$$
\bar{U}=
\sum_{i,j}e_{ij}\ot\left( \sum\nolimits_kc^{ij}_kq_k \right)^*=
\sum_{i,j}e_{ij}\ot\left( \sum\nolimits_k\overline{c^{ij}_k}q_k^* \right)=
\sum_k\overline{\sum\nolimits_{ij}c^{ij}_ke_{ij}}\otimes q_k^*
=\sum_k\bar{a_k}\ot q^*_k 
\;.
$$
This completes the  proof of the lemma.
\end{proof}

\begin{prop}\label{UUbarV}
Let $(Q,U)\in\mathbf{C_{u,\R}}(\A,\mathbb{I})$.
Then\vspace{-3pt}
\begin{list}{}{\itemsep=0pt \leftmargin=1.2em}
\item[1.] $U^\pi:=(\pi_\C\otimes\id)(U)$ is a unitary corepresentation of $Q$ on the Hilbert space $\HH$;
\item[2.] $U^{\bar\pi}:=(j\otimes *)(U^\pi)$ is a unitary corepresentation of $Q$ on $\HH$, where $ j ( a ) = J a J^* \;\forall\;a\in\B(\HH)$;
\item[3.] $V=U^\pi U^{\bar\pi}$ is a unitary corepresentation of $Q$ on the Hilbert space $\HH$. Note that: %
\begin{equation}\label{eq:Vexp}
V=\sum\nolimits_{i,j}U^\pi_{(1)i}JU^\pi_{(1)j}J^{-1}\otimes U^\pi_{(2)i}(U^\pi_{(2)j})^* \;.
\end{equation}
\end{list}
\end{prop}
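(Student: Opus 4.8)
The plan is to establish the three claims in turn, pushing the corepresentation $U$ forward first through the $*$-homomorphism $\pi_\C$ and then through the real structure $J$.

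For (1), I would simply apply the unital $*$-homomorphism $\pi_\C\otimes\id$ to $U$. Unitality and $*$-preservation give that $U^\pi=(\pi_\C\otimes\id)(U)$ is unitary, and since $\pi_\C\otimes\id\otimes\id$ is multiplicative and commutes with the leg-numbering, applying it to $(\id\otimes\Delta)U=U_{(12)}U_{(13)}$ yields $(\id\otimes\Delta)U^\pi=U^\pi_{(12)}U^\pi_{(13)}$.

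For (2), note first that $j(a)=JaJ^{-1}$ is an \emph{antilinear} $*$-automorphism of $\B(\HH)$: it is multiplicative because $JaJ^{-1}\,JbJ^{-1}=JabJ^{-1}$, it is antilinear because $J$ is, and a short computation with the inner product gives $j(a)^*=j(a^*)$. I would obtain the corepresentation identity for $U^{\bar\pi}$ by applying the antilinear map $j\otimes *\otimes *$ to $(\id\otimes\Delta)U^\pi=U^\pi_{(12)}U^\pi_{(13)}$; since $\Delta$ is a $*$-homomorphism one has $(*\otimes *)\Delta(q)=\Delta(q^*)$, so the left side becomes $(\id\otimes\Delta)U^{\bar\pi}$, while multiplicativity of $j$ turns the right side into $U^{\bar\pi}_{(12)}U^{\bar\pi}_{(13)}$. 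The real obstacle is the \emph{unitarity} of $U^{\bar\pi}$: for a generic corepresentation $W$ the element $(j\otimes *)(W)$ need not be unitary (already for $SU_q(2)$ the entrywise conjugate of the fundamental corepresentation fails to be unitary), so this is precisely where the hypothesis $F=\mathbb{I}$ must be used. Since $\B(\HH)$ is a full matrix algebra, $j$ has the form $j(a)=v\,\bar a\,v^*$ with $v$ unitary and $\bar a$ the entrywise conjugate in a fixed basis; combined with Lemma \ref{newlemmaforgaugegroupputbyme}(3) this gives $U^{\bar\pi}=(v\otimes 1_Q)\,\overline{U^\pi}\,(v^*\otimes 1_Q)$, reducing the problem to unitarity of $\overline{U^\pi}$. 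Now $F=\mathbb{I}$ forces $R=\mathbb{I}$, so conditions (i)--(ii) of Def.~\ref{def:QUG} are trace preservation by $\mathrm{Ad}_U$ and $\mathrm{Ad}_{U^*}$; by Theorem \ref{thm:QUGA} the Woronowicz subalgebra generated by the entries of $U$ is then a quotient of a free product of copies of $A_u(n_k)$, hence of Kac type, so $\kappa^2=\id$ on it (and on the entries of $U^\pi$, which are linear combinations of those of $U$). Applying the antipode $\kappa$, a $*$-anti-automorphism with $\kappa(w_{ij})=w_{ji}^*$, to the two orthogonality relations expressing unitarity of $U^\pi$ then produces exactly the two relations expressing unitarity of $\overline{U^\pi}$, which finishes (2).

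For (3), unitarity of $V=U^\pi U^{\bar\pi}$ is immediate as a product of unitaries. For the corepresentation property I would expand $(\id\otimes\Delta)V=U^\pi_{(12)}U^\pi_{(13)}U^{\bar\pi}_{(12)}U^{\bar\pi}_{(13)}$ and compare with $V_{(12)}V_{(13)}=U^\pi_{(12)}U^{\bar\pi}_{(12)}U^\pi_{(13)}U^{\bar\pi}_{(13)}$; the two coincide as soon as $U^\pi_{(13)}$ and $U^{\bar\pi}_{(12)}$ commute. Their $Q$-legs occupy different slots and commute automatically, so this reduces to commutation of the first legs, i.e.\ of $\pi_\C(\A_\C)$ with $J\pi_\C(\A_\C)J^{-1}$, which is exactly \eqref{commute kore}. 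Finally, writing $U^\pi=\sum_i U^\pi_{(1)i}\otimes U^\pi_{(2)i}$ and $U^{\bar\pi}=\sum_j JU^\pi_{(1)j}J^{-1}\otimes (U^\pi_{(2)j})^*$ and multiplying yields formula \eqref{eq:Vexp} at once. I expect the unitarity in step (2) to be the only non-formal point, with the Kac property forced by $F=\mathbb{I}$ as the crucial ingredient.
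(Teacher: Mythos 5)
Your proof is correct and follows the same overall architecture as the paper's: part 1 is formal, part 2 reduces unitarity of $U^{\bar\pi}$ to unitarity of the entrywise conjugate $\overline{U^\pi}$ by writing $j$ as a unitary conjugation composed with entrywise conjugation (the paper's $J_0$), and part 3 rests on the commutation of $U^\pi_{(13)}$ with $U^{\bar\pi}_{(12)}$, which you correctly trace back to \eqref{commute kore}. The one place where you genuinely diverge is the justification of the non-formal step, namely unitarity of $\overline{U^\pi}$. The paper obtains it directly: by Lemma \ref{lemma:usualTr} with $F=\mathbb{I}$ the pair $(Q,U)$ lies in $\mathbf{C_u}(\A_{\C},\mathbb{I})$, and conditions (i)--(ii) of Def.~\ref{def:QUG} at $R=\mathbb{I}$, unwound as in the proof of Prop.~\ref{prop:QUGofMnC}, give $U^t\bar U=\bar U U^t=\mathbb{I}$ blockwise, i.e.\ $U$ is a biunitary --- no quantum group structure on the algebra generated by the entries of $U$ is needed. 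Your detour through Theorem \ref{thm:QUGA}, the Kac property of $*_kA_u(n_k)$ and the antipode reaches the same conclusion and is logically sound (it implicitly uses that the Woronowicz $C^*$-subalgebra generated by the entries of $U$ is a CQG quotient of the universal object, which holds because $U$ is assumed to be a corepresentation), but it is heavier than necessary: the biunitarity is already encoded in the two state-preservation conditions defining the category, which is precisely why condition (ii) was included in Def.~\ref{def:QUG}. Everything else, including the derivation of \eqref{eq:Vexp} and the use of Lemma \ref{newlemmaforgaugegroupputbyme}, matches the paper.
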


\begin{proof}
Unitarity of $U^\pi$ follows from the fact that $\pi_\C$ is a unital $*$-representation and
$U$ is a  unitary. Furthermore,
\begin{align*}
(\id\otimes\Delta)(U^\pi) &=(\id\otimes\Delta)(\pi_\C\otimes\id)(U)
=(\pi_\C\otimes\id)(\id\otimes\Delta)(U)
\\[3pt]
&=(\pi_\C\otimes\id)(U_{(12)}U_{(13)})
=(\pi_\C\otimes\id)(U_{(12)})(\pi_\C\otimes\id)(U_{(13)})
=U^\pi_{(12)}U^\pi_{(13)} \;.
\end{align*}
This proves that $U^\pi$ is a corepresentation.

\smallskip

\noindent
To prove 2, we compute
\begin{align*}
(\id\otimes\Delta)(U^{\bar\pi}) &=(\id\otimes\Delta)(\jmath\otimes *)(U^\pi)
=(\jmath\otimes *\otimes *)(\id\otimes\Delta)(U^\pi)
\\[3pt]
&=(\jmath\otimes *\otimes *)(U^\pi_{(12)}U^\pi_{(13)})
=(\jmath\otimes *\otimes *)(U^\pi_{(12)})(\jmath\otimes *\otimes *)(U^\pi_{(13)})
=U^{\bar\pi}_{(12)}U^{\bar\pi}_{(13)} \;.
\end{align*}
proving that $U^{\bar\pi}$ is a corepresentation.

To prove unitarity, let us fix a basis of $\HH$ and denote by $J_0$ the unitary operator
obtained by composing $J$ with the componentwise conjugation in this basis, and by $\bar{a}$
as usual the ``bar'' of an $a\in\B(\HH)$ considered as a matrix in the chosen basis. Since, $J^2=\epsilon 1,$ $\epsilon=\pm 1$, so $J^*=\epsilon J$ and we have
\begin{equation*}
\begin{split}
JaJ^*(v)&=Ja\epsilon J(v)=\epsilon Ja(J_0 \overline{v})\\[2pt]
&=\epsilon J_0(\overline{a(J_0(\overline{v}))})=\epsilon J_0 (\overline{a}\overline{J_0(\overline{v})})=\epsilon J_0\overline{a}\overline{J_0}v, 
\end{split}
\end{equation*}
where $a\in B(\HH),~v\in\HH,$ which proves that for $a\in B(\HH),$ we have 
\begin{equation}\label{eq:number}
JaJ^{-1}=\epsilon J_0\bar{a}\bar{J_0} \;.
\end{equation}
Using this, it is easy to see that $U^{\bar\pi}= \epsilon (J_0\otimes 1)\bar{ U^{\pi}} (\bar{J_0}\otimes 1).$ Since $ U $ is a biunitary, $ U^\pi $ is a unitary implies that $\bar{U^{\pi}}$ is unitary. Thus, $U^{\bar\pi} $ is a product of three unitary operators and hence is a unitary.

\smallskip

\noindent
Now we prove 3. $V$ is a product of two unitary operators and hence is a unitary.  Moreover,
$$
(\id\otimes\Delta)(V)=(\id\otimes\Delta)(U^\pi)(\id\otimes\Delta)(U^{\bar\pi})=
U^\pi_{(12)}U^\pi_{(13)}U^{\bar\pi}_{(12)}U^{\bar\pi}_{(13)}
 \;.
$$
We notice that due to points 2 and 3 of Lemma \ref{newlemmaforgaugegroupputbyme}, 
and also equation \eqref{eq:number}, $U^\pi_{(13)} $ commutes with $ U^{\bar\pi}_{(12)}.$
This proves that $V$ is a corepresentation.
\end{proof}

\begin{df}
The CQG generated by the matrix coefficients of the unitary corepresentation \eqref{eq:Vexp}, when $(Q,U)=(Q_{u,\R}(\A),U_0)$ is the
quantum unitary group of $\A$, will be called \emph{quantum gauge group}
of the finite spectral triple $(\A,\HH,D,\gamma,J)$, and will
be denoted by $\hat{\mc{G}}(\A,J)$.
\end{df}

\begin{rem}
Using \eqref{eq:number} in \eqref{eq:Vexp},
we can rewrite the latter %
in the following equivalent way:
$$
V=\epsilon\,U^{\pi} ( J_0 \otimes id ) \bar{ U^{\pi} } ( \bar{J_0} \otimes
id )
$$
We are going to use this equation
in the next three subsections, where we compute the quantum
gauge group for the Einstein-Yang-Mills system, the spectral
triple on $\A^{\mathrm{ev}}=\Q\oplus\Q\oplus M_4(\C)$, and
the finite noncommutative space of the Standard Model.
Note that in the three above-mentioned examples we have
$\epsilon=1$.
\end{rem}

\subsection{The Einstein Yang-Mills system}
In this section, we consider the following five families of real spectral triples:
\begin{itemize}
\item[i)] $\A=M_n(\C)$, $\HH=M_n(\C)$, $D=0$, $J(a)=a^*$;
\item[ii)] $\A=M_n(\R)$, $\HH=M_n(\C)$, $D=0$, $J(a)=a^*$;
\item[iii)] $\A=M_n(\Q)$, $\HH=M_{2n}(\C)$, $D=0$, $J(a)=a^*$;
\item[iv)] $\A=M_n(\C)$, $\HH=M_n(\C)\oplus M_n(\C)$, $D=0$, $J(a\oplus b)=a^*\oplus b^*$;
\item[v)] $\A=M_n(\C)$, $\HH=M_n(\C)\oplus M_n(\C)$, $D=0$, $J(a\oplus b)=b^*\oplus a^*$.
\end{itemize}
In the first case $\HH=\A$ and we think of $\A$ as a complex algebra,
while in the last four $\HH=\A_{\C}$ and we think of $M_n(\C)$ in (iv) and (v) as a
real algebra (note that the representation is not complex linear in these cases).
In all five cases the inner product is the
Hilbert-Schmidt inner product $\inner{a,b}_{HS}:=\tr(a^*b)$,
the representation $\pi$ is the restriction to $\A$ of the GNS
representation of $\A_\C$ (resp.~the GNS representation of $\A$ in
the first case) associated to the trace. Note that in the
cases (iv) and (v), the representation is
\begin{equation}\label{eq:piofa}
\pi(a)(b\oplus c)=ab\oplus \bar{a}c \;,\qquad \forall\;a,b,c\in M_n(\C),
\end{equation}
since we identify $\A=M_n(\C)$ with the real subalgebra of
$\A_{\C}=M_n(\C)\oplus M_n(\C)$ of elements of the form $a\oplus\bar a$.
From now on, the representation symbol will be omitted.

\begin{lemma}
In the five cases above, the gauge group is
$\mathcal{G}(\A,J)=PU(n)$, $PO(n)$, $PSp(n)$, $PU(n)$, and $U(n)/\Z_2$, respectively.
\end{lemma}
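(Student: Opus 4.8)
The plan is to realize $\mc{G}(\A,J)$ as the image of a single group homomorphism, and then to identify the kernel in each of the five cases separately. First I would observe that the map $\rho\colon U(\A)\to\mathcal{U}(\HH)$, $\rho(u)=uJuJ^{-1}$, is a group homomorphism into the unitaries of $\HH$. Indeed, since $J$ is an antilinear isometry one has $JuwJ^{-1}=(JuJ^{-1})(JwJ^{-1})$, and the first reality condition in \eqref{eq:real} guarantees that $JuJ^{-1}$ lies in the commutant $\pi(\A)'$, hence commutes with $w\in\pi(\A)$; together with the unitarity of $u$ and of $JuJ^{-1}$ this gives $\rho(u)\rho(w)=uw\,(JuJ^{-1})(JwJ^{-1})=\rho(uw)$. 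Consequently $\mc{G}(\A,J)=\rho\bigl(U(\A)\bigr)\cong U(\A)/\ker\rho$, so the problem reduces to identifying $U(\A)$ and computing $\ker\rho$. Here I would use the standard identifications $U(M_n(\C))=U(n)$, $U(M_n(\R))=O(n)$ and $U(M_n(\Q))=Sp(n)$, noting that in the latter two cases the involution is still hermitian conjugation.

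For cases (i)--(iii) the representation is the left regular one and $J$ is hermitian conjugation, so a direct computation using $J^{-1}=J$ and antilinearity gives $\rho(u)x=uxu^*$; that is, $\rho(u)$ is conjugation by $u$. The kernel is therefore the set of $u\in U(\A)$ that are central scalars in the ambient matrix algebra. In case (i) these are the scalars $z\mathbb{I}_n$ with $|z|=1$, so $\ker\rho=U(1)$ and $\mc{G}=PU(n)$. In case (ii) an orthogonal scalar satisfies $z^2=1$, so $\ker\rho=\{\pm\mathbb{I}_n\}=\Z_2$ and $\mc{G}=PO(n)$. In case (iii) a scalar $z\mathbb{I}_{2n}$ lies in $Sp(n)$ precisely when it is $\sigma$-fixed (via Prop.~\ref{sigmaforrealalgebrasformulae}), which forces $z$ real and hence $z=\pm1$, so again $\ker\rho=\Z_2$ and $\mc{G}=PSp(n)$.

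For cases (iv) and (v) I would compute $\rho(u)$ using the twisted representation \eqref{eq:piofa}, $\pi(u)(b\oplus c)=ub\oplus\bar{u}c$, together with the prescribed $J$. In case (iv), where $J(b\oplus c)=b^*\oplus c^*$, one finds $\rho(u)(b\oplus c)=ubu^*\oplus\bar{u}c\bar{u}^*$, i.e.~conjugation by $u$ on the first summand and by $\bar{u}$ on the second; requiring this to be trivial forces $u$ central, so $\ker\rho=U(1)$ and $\mc{G}=PU(n)$. The delicate case is (v), where the flip $J(b\oplus c)=c^*\oplus b^*$ produces $\rho(u)(b\oplus c)=ubu^t\oplus\bar{u}cu^*$, in which the first summand is acted on by $u(\,\cdot\,)u^t$ with a transpose rather than an adjoint. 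Imposing $ubu^t=b$ for all $b$ (take $b=\mathbb{I}_n$) yields $uu^t=\mathbb{I}_n$, hence $u=\bar{u}$ is real orthogonal and then central, so $u=\pm\mathbb{I}_n$; thus $\ker\rho=\Z_2$ and $\mc{G}=U(n)/\Z_2$.

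The main obstacle is precisely case (v): one must carefully track the interaction of the antilinearity of $J$ with the flip, keeping the transpose $u^t$ distinct from the conjugate-transpose $u^*$. It is exactly this distinction that prevents the first-summand action from being a conjugation and thereby shrinks the kernel from $U(1)$ to $\Z_2$, which is what separates case (v) from the otherwise parallel case (iv).
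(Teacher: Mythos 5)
Your proposal is correct and follows essentially the same route as the paper: both identify $\mc{G}(\A,J)$ with $U(\A)$ modulo the kernel of $u\mapsto uJuJ^{-1}$, derive the same explicit formulas for this operator in each of the five cases (in particular $uau^*\oplus\bar uau^t$ in case (iv) versus $uau^t\oplus\bar uau^*$ in case (v)), and compute the kernel as the central, hence scalar, unitaries — reduced to $\pm\mathbb{I}_n$ in cases (ii), (iii) and (v). The only additions are your explicit verification that $u\mapsto uJuJ^{-1}$ is a homomorphism via the order-zero condition, which the paper leaves implicit.
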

\begin{proof}
The classical gauge group $\mathcal{G}(\A,J)$ is the quotient of $U(\A)$
by the kernel of the adjoint representation $u\mapsto uJuJ^{-1}$ on $\HH$.
In the cases (i)-(iii): since $uJuJ^{-1}a=uau^*$ for all $u\in U(\A)$ and $a\in\HH$,
$uJuJ^{-1}=1$ if and only if $uau^*=a$ for all $a\in\HH$,
that is $ua=au$; this implies $u=\lambda 1$ with $\lambda\in\C$,
since $M_k(\C)$ has trivial center. In case (iv),
\begin{equation}\label{eq:numberfour}
uJuJ^{-1}(a\oplus b)=uau^*\oplus \bar{u}bu^t
\end{equation}
for all $u\in U(\A)$ and $a\oplus b\in\HH$,
and $uJuJ^{-1}=1$ if and only if $uau^*=a$ for all $a\in M_n(\C)$
(the condition $\bar{u}bu^t=b$ is equivalent to
$uau^*=a$ with $a=\bar{b}$), so that one
reaches the same conclusion. Hence $\mathcal{G}(\A,J)=U(\A)/\{U(\A)\cap U(1)\}=PU(\A)$ in the cases
(i)-(iv).

The case (v) is similar to case (iv), but instead of \eqref{eq:numberfour} one gets the
condition
$$
uJuJ^{-1}(a\oplus b)=uau^t\oplus \bar{u}bu^*
$$
for all $a,b\in M_n(\C)$ and $u\in U(\A)=U(n)$.
The kernel of the adjoint representation is given by elements $u\in U(n)$
such that $ua=a\bar{u}$ for all $a\in M_n(\C)$.
For $a=\mathbb{I}_n$ we
get $u=\bar{u}$, that is $u\in O(n)$. The kernel is then the
set of $u\in O(n)$ such that $ua=au$ for all $a\in M_n(\C)$.
Since the center of $M_n(\C)$ is trivial, we find $u=\lambda 1$
with $\lambda\in\R$. Unitarity gives $\lambda=\pm 1$.
This proves that in case (v), $\mathcal{G}(\A,J)=U(n)/\Z_2$.
\end{proof}

Let us explain the physical interest for the spectral triples above.

The spectral triple in (i) is the finite part of the spectral triple
studied for example in Sec.~11.4 of \cite{CM08}, describing the geometry
of a (Euclidean) $SU(n)$ Yang-Mills theory minimally coupled to gravity.
We remark that in Connes' approach, gauge fields are connections with
coefficients in the Lie algebra $\mathfrak{g}$ of the gauge group, and
since in (i) the gauge group is $PU(n)=SU(n)/\Gamma_n$, that has the same
Lie algebra as $SU(n)$, one speaks about $SU(n)$ gauge theory. Here $\Gamma_n\simeq\Z_n$
is the group of $n$-th roots of unity.

Similarly, since $PO(n)$ has the same Lie algebra of $SO(n)$ and $PSp(n)$
has the same Lie algebra of $Sp(n)$, with the spectral triples in (ii) and
(iii) one can construct $SO(n)$ and $Sp(n)$ Yang-Mills theories, respectively,
as one can see adapting the proof of \cite[Prop.~1.157]{CM08}.

In the case (iv), $\HH=M_n(\C)\oplus M_n(\C)$ is doubled with respect
to case (i), and while the first summand transforms according to the
representation $(u,a)\mapsto uau^*$ of $U(n)$, the second summand transforms
according to the dual representation $(u,b)\mapsto \bar{u}bu^t$
(cf.~equation \eqref{eq:numberfour}), i.e.~like a pair particle-antiparticle.
One gets then a $SU(n)$ Yang-Mills
theory but with a sort of fermion-doubling phenomenon (the particle-antiparticle
distinction is already present in the continuous part of the full spectral triple).

Finally, as explained in the proof of \cite[Prop.~1.157]{CM08},
it is not possible to get quantum electrodynamics
(a $U(1)$ gauge theory) from the example (i), because for $n=1$
the adjoint action has the whole group $U(\A)=U(1)$ in the kernel.
The solution used in \cite{walter} to get a $U(1)$ gauge theory
is to use a two-point space, i.e.~to ``double'' the spectral
triple in (i), for $n=1$. The spectral triple $(\A_{\C},\HH,D,J)=(\C^2,\C^2,0,J)$ considered
in Sec.~3.3 of \cite{walter} is the ``complexification'' ---
in the sense we discussed in Sec.~\ref{sec:catCJ}, cf.~equation \eqref{eq:piC} --- of
our spectral triple
(v) for $n=1$.
In \cite{walter}, the authors use the gauge group of the complexified
spectral triple and prove that it is $U(1)$, but we remark here that
using the real spectral triple (v) one reaches the same result: for $n=1$,
$\mathcal{G}(\A,J)=U(1)/\Z_2\simeq U(1)$. In general, for arbitrary $n$,
since $U(n)/\Z_2$ has the same Lie algebra as $U(n)$, by applying the
spectral action machinery one gets a $U(n)$ Yang-Mills theory (minimally
coupled to gravity).

\smallskip

We now compute the quantum gauge group of the spectral triples above.
The computation is completely analogous to the one of the classical gauge group.
We remark that a real structure similar to that of (v)
will be used for the spectral triple on $\A^{\mathrm{ev}}$,
discussed in the next Sec.~\ref{sec:aev}. 

\begin{prop}\label{prop:qgg}
In the cases (i)-(iii), the quantum gauge group $\hat{\mathcal{G}}(\A,J)$
is the projective version of $Q_{u,\R}(\A)$;
thus $\hat{\mathcal{G}}(\A,J)=PA_u(n),PA_o(n),PA_{sp}(n)$, respectively.
In case (iv), $\hat{\mathcal{G}}(\A,J)$ is generated by products $u_{lm}u_{kj}^*$
and $u_{li}^*u_{kj}$, where $u_{ij}$ are the canonical generators of $A_u(n)$.
In case (v), $\hat{\mathcal{G}}(\A,J)$ is generated by products $u_{lm}u_{kj}$.
Note that both in case (iv) and (v), $PA_u(n)$ is a $C^*$-subalgebra of $\hat{\mathcal{G}}(\A,J)$.
\end{prop}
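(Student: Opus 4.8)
The plan is to compute the unitary corepresentation $V=U^\pi U^{\bar\pi}$ of Prop.~\ref{UUbarV} explicitly in each of the five cases, read off its matrix coefficients in the Hilbert--Schmidt basis, and identify the $C^*$-algebra they generate (recall that $\hat{\mc{G}}(\A,J)$ is by definition the CQG generated by the coefficients of $V$ for $(Q,U)=(Q_{u,\R}(\A),U_0)$, and that $\epsilon=1$ in all cases, so \eqref{eq:number} and \eqref{eq:Vexp} apply). The one computational fact driving everything is that $J(a)=a^*$ is the componentwise adjoint, so conjugation by $J$ interchanges left and right multiplication: writing $L_a,R_a$ for left and right multiplication on $M_n(\C)$ and $\pi(e_{ij})=L_{e_{ij}}$, a one-line computation gives $JL_{e_{kl}}J^{-1}=R_{e_{lk}}$. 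Hence $U^\pi$ is assembled from left-multiplication operators and $U^{\bar\pi}=(\jmath\ot *)(U^\pi)$ from right-multiplication operators.

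For cases (i)--(iii) the quantum unitary group is $A_u(n)$, $A_o(n)$, $A_{sp}(n)$ respectively (cf.\ Prop.~\ref{prop:QUGofMnC}, Remark~\ref{rem:sp} and Sec.~\ref{sec:3.5}), $\HH$ is a single copy of $M_n(\C)$ (resp.\ $M_{2n}(\C)$ in (iii)), and $U^\pi=\sum_{ij}L_{e_{ij}}\ot u_{ij}$. Forming the product gives
\begin{equation*}
V=\sum\nolimits_{i,j,k,l}L_{e_{ij}}R_{e_{lk}}\ot u_{ij}u_{kl}^* \;.
\end{equation*}
The decisive step is to note that $\{L_{e_{ij}}R_{e_{lk}}\}_{i,j,k,l}$ are exactly the matrix units of $\B(M_n(\C))$ in the Hilbert--Schmidt basis, hence a linearly independent family; consequently the matrix coefficients of $V$ are precisely the products $u_{ij}u_{kl}^*$. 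By Def.~\ref{def:Ban} these generate the projective version, so $\hat{\mc{G}}(\A,J)$ equals $PA_u(n)$, $PA_o(n)$, $PA_{sp}(n)$ respectively, matching Prop.~\ref{thm:Ban}.

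For (iv) and (v) the quantum unitary group is $A_u(n)$, and setting $K=H=\mathbb{I}_n$ in the proof of Prop.~\ref{prop:FKH} the corepresentation is $U_0=\bar u\oplus u$; with $\pi_\C(m_1\oplus m_2)=L_{m_1}\oplus L_{m_2}$ (read off from \eqref{eq:piofa}) one obtains the block-diagonal operator $U^\pi=\big(\sum_{ij}L_{e_{ij}}\ot u_{ij}^*\big)\oplus\big(\sum_{ij}L_{e_{ij}}\ot u_{ij}\big)$ on $\HH=M_n(\C)\oplus M_n(\C)$. In (iv), $J(a\oplus b)=a^*\oplus b^*$ is block diagonal, so $U^{\bar\pi}$ and $V$ are block diagonal; computing each block as above yields coefficients $u_{ij}^*u_{kl}$ on the first summand and $u_{ij}u_{kl}^*$ on the second, which is the claimed generating set. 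In (v), $J(a\oplus b)=b^*\oplus a^*$ interchanges the two summands, and the same computation (now with $\jmath$ swapping the blocks) produces the coefficients $u_{ij}u_{kl}$ together with their adjoints $u_{ij}^*u_{kl}^*$, so $\hat{\mc{G}}(\A,J)$ is generated by the products $u_{lm}u_{kj}$.

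Finally, for the inclusion $PA_u(n)\subset\hat{\mc{G}}(\A,J)$: in (iv) the generators $u_{ij}u_{kl}^*$ of $PA_u(n)$ appear directly among the coefficients of $V$, so there is nothing to do. In (v) this is the only genuinely nontrivial point, and I would obtain it from the contraction
\begin{equation*}
\sum\nolimits_{l}(u_{ij}u_{kl})(u_{pq}u_{kl})^*
=u_{ij}\Big(\sum\nolimits_{l}u_{kl}u_{kl}^*\Big)u_{pq}^*
=u_{ij}u_{pq}^* \;,
\end{equation*}
using the unitarity relation $\sum_{l}u_{kl}u_{kl}^*=1$ (the $(k,k)$ entry of $uu^*=\mathbb{I}_n$). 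Since the left-hand side is a sum of products of a coefficient of $V$ with the adjoint of another, it lies in $\hat{\mc{G}}(\A,J)$, and as $i,j,p,q$ are arbitrary this yields $PA_u(n)\subset\hat{\mc{G}}(\A,J)$. I expect the careful bookkeeping of how $J$ acts on the two summands in (iv)/(v), together with this contraction identity, to be the main obstacle; the remaining verification that $U^\pi$ and $U^{\bar\pi}$ have the stated left/right-multiplication form is routine, and the complex case (i) runs identically with $\pi_\C=\pi$.
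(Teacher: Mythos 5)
Your proposal is correct and follows essentially the same route as the paper: compute $V=U^\pi U^{\bar\pi}$ explicitly (your $L_{e_{ij}}R_{e_{lk}}$ formulation is just a cleaner way of writing the paper's basis-vector computation $V(e_{ij})=\sum_{kl}e_{lk}\otimes u_{li}u_{kj}^*$, and the linear independence of these operators justifies reading off the coefficients), then track how $J_0$ acts block-diagonally in (iv) and block-swapping in (v). The one genuine addition is your contraction identity $\sum_l(u_{ij}u_{kl})(u_{pq}u_{kl})^*=u_{ij}u_{pq}^*$ establishing $PA_u(n)\subset\hat{\mathcal{G}}(\A,J)$ in case (v), which the paper asserts in the statement but does not prove; that argument is valid and welcome.
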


\begin{proof}
Let us start with case (i): $\A=M_n(\C)$, $\HH=M_n(\C)$ and $J(a)=a^*$, that is $J_0(a)=a^t$.
Let $U=(u_{ij})$ be the fundamental corepresentation of $A_u(n)$, $e_{ij}$ be the canonical
basis of $M_n(\C)$ and $\pi$ the representation of $\A$.
Note that modulo the identification of $\pi(\A)$ with $\A$,
the corepresentation $U^\pi$ in Prop.~\ref{UUbarV} is $U^\pi=\sum_{ij}e_{ij}\otimes u_{ij}$.
Therefore:
\begin{align*}
V(e_{ij}) &=U^\pi (J_0\otimes\id)\bar{U^\pi}(J_0\otimes\id)(e_{ij}\otimes 1)
\\
&=U^\pi (J_0\otimes\id)\bar{U^\pi}(e_{ji}\otimes 1)
=U^\pi (J_0\otimes\id)\sum\nolimits_{kl}e_{kl}e_{ji}\otimes u_{kl}^*
\\
&=U^\pi (J_0\otimes\id)\sum\nolimits_{k}e_{ki}\otimes u_{kj}^*
=U^\pi \sum\nolimits_{k}e_{ik}\otimes u_{kj}^*
\\
&=\sum\nolimits_{klm}e_{lm}e_{ik}\otimes u_{lm}u_{kj}^*
=\sum\nolimits_{kl}e_{lk}\otimes u_{li}u_{kj}^* \;.
\end{align*}
From this, it follows that $\hat{\mathcal{G}}(\A,J)$ is generated by
elements $u_{li}u^*_{kj}$, i.e.~it is $PQ_{u,\R}(\A)=PA_u(n)$.

In the cases (ii) and (iii) the proof is exactly the same, except that one gets
$PQ_{u,\R}(\A)=PA_o(n)$ and $PQ_{u,\R}(\A)=PA_{sp}(n)$, respectively.

In the case (iv), if $U=(u_{ij})$ is the fundamental corepresentation of $Q_{u,\R}(M_n(\C))=A_u(n)$,
from the proof of Prop.~\ref{prop:FKH} with $F=\mathbb{I}$ we see that $U^\pi=U\oplus\bar{U}$,
or explicitly
$$
U^\pi(e_{ij,r}\otimes 1)=\begin{cases}
\sum_ke_{kj,r}\otimes u_{ki} &\mathrm{if}\;r=1 \,,\\
\sum_ke_{kj,r}\otimes u_{ki}^* &\mathrm{if}\;r=2 \,,
\end{cases}
$$
where $e_{ij,r}$ and $e_{ij,2}$ are the canonical bases of the two copies of $M_n(\C)$ in $\HH$.
A computation similar to the one for cases (i)-(iii), but with $J_0(e_{ij,r})=e_{ji,r}$, gives
\begin{equation}\label{eq:change}
V(e_{ij,r})=\begin{cases}
\sum\nolimits_{kl}e_{lk,r}\otimes u_{li}u_{kj}^* &\mathrm{if}\;r=1 \,,\\
\sum\nolimits_{kl}e_{lk,r}\otimes u_{li}^*u_{kj} &\mathrm{if}\;r=2 \,.
\end{cases}
\end{equation}
Thus $\hat{\mathcal{G}}(\A,J)$ is generated by products $u_{lm}u_{kj}^*$
and $u_{li}^*u_{kj}$.

The case (v) is similar, except for $J_0(e_{ij,1})=e_{ji,2}$ and $J_0(e_{ij,2})=e_{ji,1}$.
The formula for $U^\pi$ is the same, but now due to a different real
structure, instead of \eqref{eq:change} we find:
$$
V(e_{ij,r})=\begin{cases}
\sum\nolimits_{kl}e_{lk,r}\otimes u_{li}u_{kj} &\mathrm{if}\;r=1 \,,\\
\sum\nolimits_{kl}e_{lk,r}\otimes u^*_{li}u_{kj}^* &\mathrm{if}\;r=2 \,.
\end{cases}
$$
Thus, $\hat{\mathcal{G}}(\A,J)$ is generated by products $u_{li}u_{kj}$
and their adjoints. This concludes the proof.
\end{proof}

\subsection{The spectral triple on $\A^{\mathrm{ev}}$}\label{sec:aev}

The spectral triple discussed in this section can be found in \cite{whySM}.
The data is the following. The Hilbert space is $M_4(\C)\oplus M_4(\C)$, with inner product
$\inner{a\oplus b,c\oplus d}=\tr(a^*c+b^*d)$. 
The real structure is the map $a\oplus b\mapsto b^*\oplus a^*$,
where $*$ is the Hermitian conjugation.
The algebra $\A^{\mathrm{ev}}=\Q\oplus\Q\oplus M_4(\C)$
acts on $M_4(\C)\oplus M_4(\C)$ by left multiplication.
Here we identify $\Q$ with the real subalgebra of
$M_2(\C)$ with elements
\begin{equation}\label{eq:XXX}
q=\begin{mat}
\alpha & \beta \\
-\bar\beta & \bar\alpha
\end{mat}
\end{equation}
for $\alpha,\beta\in\C$,
and we identify accordingly $\Q \oplus \Q $ with the corresponding real
subalgebra of $M_2(\C) \oplus M_2 ( \C ) \subset M_4 ( \C )$.

For computational reasons, it is useful to rewrite the spectral triple as follows.
The map $e_{ij}\oplus 0\mapsto e_i\otimes e_j\otimes e_1$ and
$0\oplus e_{ij}\mapsto e_i\otimes e_j\otimes e_2$
(with $e_i$ canonical orthonormal basis vectors)
is an isometry between $M_4(\C)\oplus M_4(\C)$ and the
Hilbert space $\HH:=\C^4\otimes\C^4\otimes\C^2$;
the corresponding representation $\pi$ of $\A^{{\rm ev}}$ on $\HH$
is given by
$$
\pi(a,b)=a\otimes \mathbb{I}_4\otimes e_{11}+b\otimes \mathbb{I}_4\otimes e_{22} ,
$$
where $a\in \Q \oplus \Q $ and $b\in M_4(\C)$.

The complex linear span of $\pi(\A^{{\rm ev}})$ inside $\B(\HH)$ is the complex $C^*$-algebra
$M_2(\C) \oplus M_2 ( \C ) \oplus M_4(\C)$. The real structure becomes the antilinar operator
$J$ given by
$$
J(\xi\otimes\zeta\otimes\eta)=
\zeta^*\otimes\xi^*\otimes\begin{pmatrix} 0 & 1 \\ 1 & 0\end{pmatrix}\eta^* \;,
$$
where now $*$ is the componentwise conjugation on $\C^4$ resp.~$\C^2$.

Since we are going to need the CQG $ Q_{u,\R} ( \A^{{\rm ev}}   ) \cong C(SU(2))\ast C(SU(2))\ast A_u(4), $ we fix the notation for its generators.
The symbols $ ( a_{ij} )_{i,j = 1,2}$ and $ ( a_{kl} )_{k,l = 3,4}$ denote the canonical generators of the first and second copy of $C(SU(2))$.
That is, the first resp.~the second copy of $C(SU(2))$ is generated by the matrix elements of a biunitary matrix
$$
\bigg(\!\begin{array}{cc}
a_{11} & a_{12} \\ -a_{12}^* & a_{11}^*
\end{array}\!\bigg)\;,\qquad\mathrm{resp.}\quad
\bigg(\!\begin{array}{cc}
a_{33} & a_{34} \\ -a_{34}^* & a_{33}^*
\end{array}\!\bigg)
\;,
$$
and $a_{21}=-a_{12}^*$, $a_{22}=a_{11}^*$, $a_{43}=-a_{34}^*$, $a_{44}=a_{33}^*$, gives the involution.
For the generators of $A_u(4),$ we use the usual symbols $(u_{ij})_{i,j=1}^4.$
We will denote the canonical basis of 
$ \Q_{\C} \cong M_2 ( \C )  $ by the symbols $ F_{ij}, ~ i,j = 1,2  $ while $E_{ij,k}$, for $i,j=1,\ldots,4$ and $k=1,2$, will denote the
generators of the $k$-th copy of $M_4(\C)$ in $M_4(\C)_\C:=M_4(\C)\oplus M_4(\C)$.
The unitary corepresentations of $ Q_{u,\R} ( \Q ) \cong C(SU(2))$ on $ L^2 ( M_2 ( \C ), \tr ) $ and of $ Q_{u,\R} ( M_4(\C) ) \cong A_u(4)$
on $L^2 ( M_4 ( \C ) \oplus M_4 ( \C ), \tr   ) $, respectively, are given by 
\begin{align*}
W_{\Q} &:=F_{11}\otimes a_{11} +F_{12}\otimes a_{12} +F_{21}\otimes a_{21} ~+F_{22}\otimes a_{22} \;,\\
W_{M_4 ( \C )} &:=\sum_{i,j=1}^4 E_{ij,1}\otimes u_{ij}+\sum_{i,j=1}^4 E_{ij,2}\otimes u^*_{ij} \;.
\end{align*}

\begin{lemma}\label{lemma:m4c}
We have:
\begin{equation}\label{quaternion}
\pi_\C(F_{ij})=e_{ij}\otimes 1 \otimes e_{11},~i,j=1,2 \;,
\end{equation}
and
\begin{subequations}\label{m4c}
\begin{align}
(\pi_\C\otimes\id) W_{M_4 ( \C )} &=\sum_{i,j=1}^4 e_{ij}\otimes1\otimes e_{22}\otimes u_{ij},\\
(\pi_\C\otimes\id)\bar{W_{M_4 ( \C )}} &=\sum_{i,j=1}^4 e_{ij}\otimes1\otimes e_{22}\otimes u^*_{ij}.
\end{align}
\end{subequations}
\end{lemma}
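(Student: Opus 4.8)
The plan is to derive the three identities by unravelling the two complexifications $\Q_\C\cong M_2(\C)$ and $M_4(\C)_\C\cong M_4(\C)\oplus M_4(\C)$ of Prop.~\ref{sigmaforrealalgebrasformulae} and feeding the outcome into the definition \eqref{eq:piC} of $\pi_\C$.

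For \eqref{quaternion} I would use that $\Q$, realised as in \eqref{eq:XXX} inside the top-left $2\times 2$ block of $M_4(\C)$, is a real form of $M_2(\C)$; hence $\Q_\C\cong M_2(\C)$ and $\pi_\C$ is simply the $\C$-linear extension of $\pi|_\Q$. Since $\pi$ sends this copy of $\Q$ into $(\,\cdot\,)\otimes\mathbb{I}_4\otimes e_{11}$ and $\pi_\C$ is injective on the simple algebra $\Q_\C$, this extension carries the matrix units $F_{ij}$ of $\Q_\C\cong M_2(\C)$ to the block matrix units $e_{ij}\otimes 1\otimes e_{11}$ ($i,j=1,2$), which is exactly \eqref{quaternion}. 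Concretely, each such $e_{ij}$ is a $\C$-combination of elements of $\Q$, and feeding this combination through \eqref{eq:piC} reproduces the same result.

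The key point for \eqref{m4c} is that $M_4(\C)$ is a \emph{complex} algebra, so, as noted in Sec.~\ref{sec:catCJ}, $\pi_\C$ is not faithful on $M_4(\C)_\C$: its kernel contains every element $a\otimes_\R 1+(ia)\otimes_\R i$ and in fact equals one of the two summands of $M_4(\C)_\C=M_4(\C)\oplus M_4(\C)$. Matching the explicit isomorphism $\theta\colon m\otimes_\R z\mapsto(zm,z\bar m)$ with the formula $\sigma(m_1\oplus m_2)=\bar m_2\oplus\bar m_1$ of Prop.~\ref{sigmaforrealalgebrasformulae} lets me write $E_{ij,1}=\theta\big(\tfrac12(e_{ij}\otimes_\R 1-(ie_{ij})\otimes_\R i)\big)$ and $E_{ij,2}=\theta\big(\tfrac12(e_{ij}\otimes_\R 1+(ie_{ij})\otimes_\R i)\big)$; applying \eqref{eq:piC} together with $\pi(ie_{ij})=i\pi(e_{ij})$ then gives
$$
\pi_\C(E_{ij,1})=\pi(e_{ij})=e_{ij}\otimes 1\otimes e_{22},\qquad \pi_\C(E_{ij,2})=0 .
$$
Substituting these into $W_{M_4(\C)}=\sum_{ij}E_{ij,1}\otimes u_{ij}+\sum_{ij}E_{ij,2}\otimes u^*_{ij}$ yields the first identity of \eqref{m4c}.

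For the second identity I would first compute $\bar{W_{M_4(\C)}}$ by part 3 of Lemma~\ref{newlemmaforgaugegroupputbyme}. In the basis $\{E_{kl,1},E_{kl,2}\}$ of $L^2(M_4(\C)\oplus M_4(\C),\tr)$ the left multiplications by the matrix units $E_{ij,k}$ have real (permutation-type) matrices, hence are fixed by the ``bar''; therefore $\bar{W_{M_4(\C)}}=\sum_{ij}E_{ij,1}\otimes u^*_{ij}+\sum_{ij}E_{ij,2}\otimes u_{ij}$, and applying $(\pi_\C\otimes\id)$ with the two values just computed gives $\sum_{ij}e_{ij}\otimes 1\otimes e_{22}\otimes u^*_{ij}$, as required. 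I expect the only delicate step to be the bookkeeping in the third paragraph, namely fixing the orientation of the isomorphism $M_4(\C)_\C\cong M_4(\C)\oplus M_4(\C)$ so that it is the \emph{second} copy that $\pi_\C$ annihilates while staying consistent with the $\sigma$ of Prop.~\ref{sigmaforrealalgebrasformulae}; everything else reduces to routine substitution.
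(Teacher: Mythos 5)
Your proposal is correct and follows essentially the same route as the paper's proof: both unravel the explicit isomorphisms $\Q_\C\cong M_2(\C)$ and $M_4(\C)_\C\cong M_4(\C)\oplus M_4(\C)$ of Prop.~\ref{sigmaforrealalgebrasformulae}, write $F_{ij}$ and $E_{ij,k}$ as elements of $\A\otimes_{\R}\C$, and push them through \eqref{eq:piC}, the key point in both cases being that $\pi_\C$ restricts to $\pi$ on the first summand of $M_4(\C)_\C$ and annihilates the second. The only difference is one of explicitness: the paper spells out the four $F_{ij}$ as $iq_1+q_2$ with $q_1,q_2\in\Q$ and leaves the $\bar{W}_{M_4(\C)}$ computation implicit, whereas you do roughly the reverse.
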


\begin{proof}
The isomorphism between the complex $C^*$-algebras $M_2(\C)$ and $\Q\otimes_\R\C$ is  determined by $m\rightarrow m\otimes_\R1$ and $i m\rightarrow m\otimes_\R i$ for $m\in\Q.$ The equations \ref{quaternion} can now be derived easily by observing that:
\begin{align*}
F_{11} &=i\bigg(\!\begin{array}{cc}
                     -i\frac{1}{2}&0\\0&i\frac{1}{2}
                     \end{array}\!\bigg)+
\bigg(\!\begin{array}{cc}
\frac{1}{2}&0\\0&\frac{1}{2}\end{array}\!\bigg) \;, &
F_{12} &=i\bigg(\!\begin{array}{cc}
                     0&-i\frac{1}{2}\\-i\frac{1}{2}&0
                     \end{array}\!\bigg)+
\bigg(\!\begin{array}{cc}
0&\frac{1}{2}\\-\frac{1}{2}&0
\end{array}\!\bigg)\;, \\
F_{21}&=i\bigg(\!\begin{array}{cc}
                     0&-i\frac{1}{2}\\-i\frac{1}{2}&0
                     \end{array}\!\bigg)+
\bigg(\!\begin{array}{cc}
0&-\frac{1}{2}\\\frac{1}{2}&0
\end{array}\!\bigg) \;, &
F_{22}&=i\bigg(\!\begin{array}{cc}
                     i\frac{1}{2}&0\\0&-i\frac{1}{2}
                     \end{array}\!\bigg)+
\bigg(\!\begin{array}{cc}
\frac{1}{2}&0\\0&\frac{1}{2}
\end{array}\!\bigg) \;.
\end{align*}
Similarly, the
isomorphism between $M_4(\C)\oplus M_4(\C)$ and $M_4(\C)\otimes_\R\C,$ is determined by $(a\oplus\bar{a})\rightarrow a\otimes_\R 1$ and $i(a\oplus\bar{a})\rightarrow a\otimes_\R i.$ Under this isomorphism, $a\oplus0$ and $0\oplus\bar{a}$ get mapped to $\frac{1}{2}(a\otimes_\R1-ia\otimes_\R i)$ and  $\frac{1}{2}(a\otimes1+ia\otimes i)$, respectively. Using these facts, \eqref{m4c} follows easily.
\end{proof}

We now compute the quantum gauge group.

\begin{prop}\label{prop:4.7}
The quantum gauge group $\hat{\mc{G}}(\A^{{\rm ev}},J)$ of the spectral triple above is the projective version of $ C(SU(2))\ast C(SU(2))\ast A_u(4)$.
\end{prop}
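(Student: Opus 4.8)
The plan is to imitate the proof of Proposition~\ref{prop:qgg}: compute the corepresentation $V$ of \eqref{eq:Vexp} explicitly, read off its matrix coefficients, and identify the $C^*$-algebra they generate. Denote by $a=(a_{ij})_{i,j=1}^{4}$ the $4\times4$ block-diagonal matrix over $Q$ whose two diagonal $2\times2$ blocks are the fundamental corepresentations of the two copies of $C(SU(2))$ (so $a_{ij}=0$ unless $i,j\in\{1,2\}$ or $i,j\in\{3,4\}$), and by $u=(u_{ij})_{i,j=1}^{4}$ the fundamental corepresentation of $A_u(4)$. Using Lemma~\ref{lemma:m4c} for the $M_4(\C)$-summand together with the analogous computation of $\pi_\C$ on the two copies of $\Q$ (both of which, under the identification $\HH=\C^4\ot\C^4\ot\C^2$, land in the $e_{11}$-component of the last leg, the first sitting in the upper-left and the second in the lower-right $2\times2$ block of the first $\C^4$), one finds that $U^\pi=(\pi_\C\ot\id)(U_0)$ acts only on the first tensor leg:
$$
U^\pi=a\ot\mathbb{I}_4\ot e_{11}+u\ot\mathbb{I}_4\ot e_{22}\,.
$$

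Next I would read off $J_0$ from the given formula for $J$: it is the linear operator that flips the two $\C^4$ legs and acts by the flip $\tau=\big(\begin{smallmatrix}0&1\\1&0\end{smallmatrix}\big)$ on $\C^2$; in particular $J_0$ is a real unitary with $J_0^2=\id$, so $\bar{J_0}=J_0$ and $\epsilon=1$ (as already noted in the Remark above). By Lemma~\ref{newlemmaforgaugegroupputbyme}(3) one has $\bar{U^\pi}=\bar a\ot\mathbb{I}_4\ot e_{11}+\bar u\ot\mathbb{I}_4\ot e_{22}$, and conjugating by $J_0$ moves the nontrivial action to the second $\C^4$ leg while interchanging $e_{11}\leftrightarrow e_{22}$, so that $U^{\bar\pi}=\mathbb{I}_4\ot\bar u\ot e_{11}+\mathbb{I}_4\ot\bar a\ot e_{22}$. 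Multiplying, the cross terms vanish because $e_{11}e_{22}=e_{22}e_{11}=0$, and we are left with
$$
V=U^\pi\,U^{\bar\pi}=a\ot\bar u\ot e_{11}+u\ot\bar a\ot e_{22}\,,
$$
whose nonzero matrix coefficients are precisely the mixed products $a_{ij}u_{kl}^{*}$ (from the $e_{11}$-block) and $u_{ij}a_{kl}^{*}$ (from the $e_{22}$-block).

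It remains to identify the $C^*$-algebra $\hat{\mc{G}}(\A^{\mathrm{ev}},J)$ generated by these coefficients with the projective version $PQ$ of $Q:=C(SU(2))\ast C(SU(2))\ast A_u(4)$. Writing $W=W_\Q^{(1)}\oplus W_\Q^{(2)}\oplus W_{M_4(\C)}$ for the block-diagonal fundamental corepresentation of $Q$, the entries of $W\ot\bar W$ are exactly the four families $a_{ij}a_{kl}^{*}$, $a_{ij}u_{kl}^{*}$, $u_{ij}a_{kl}^{*}$, $u_{ij}u_{kl}^{*}$ (block-diagonality of $W$ killing every other combination), and by Definition~\ref{def:Ban} these generate $PQ$. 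The inclusion $\hat{\mc{G}}(\A^{\mathrm{ev}},J)\subseteq PQ$ is then immediate, since $a_{ij}u_{kl}^{*}$ and $u_{ij}a_{kl}^{*}$ are among them. For the reverse inclusion I must recover the diagonal families from the mixed ones, which is where the unitarity relations enter: from $u^{*}u=\mathbb{I}_4$, i.e.\ $\sum_{m=1}^{4}u_{mn}^{*}u_{mn}=1$, one gets for any fixed $n$
$$
\sum\nolimits_{m=1}^{4}(a_{ij}u_{mn}^{*})(u_{mn}a_{kl}^{*})=a_{ij}\Big(\sum\nolimits_{m}u_{mn}^{*}u_{mn}\Big)a_{kl}^{*}=a_{ij}a_{kl}^{*}\,,
$$
and, symmetrically, using column-orthonormality $\sum_{m}a_{mn}^{*}a_{mn}=1$ within an $SU(2)$ block, one recovers $u_{ij}u_{kl}^{*}=\sum_{m}(u_{ij}a_{mn}^{*})(a_{mn}u_{kl}^{*})$. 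Hence all generators of $PQ$ lie in $\hat{\mc{G}}(\A^{\mathrm{ev}},J)$, and the two algebras coincide.

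The step I expect to be the real obstacle is this last recovery argument. Unlike the single-block cases (i)--(iii) of Proposition~\ref{prop:qgg}, where $V$ directly produces all of $u_{ij}u_{kl}^{*}$ and the gauge group is visibly $PA_u(n)$, here $V$ supplies only the off-diagonal pieces $W_p\ot\bar W_q$ of $W\ot\bar W$ in which one factor is the $A_u(4)$-corepresentation and the other an $SU(2)$-corepresentation; one must verify that contracting these mixed coefficients against the two flavours of unitarity relation regenerates the missing diagonal and $SU(2)$--$SU(2)$ blocks, so that no part of $PQ$ is lost.
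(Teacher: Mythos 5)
Your proposal is correct and follows essentially the same route as the paper's proof: compute $(\pi_\C\otimes\id)$ of the corepresentation, observe that only the cross terms $U_1(J_0\otimes\id)\bar{U_2}(J_0\otimes\id)$ and $U_2(J_0\otimes\id)\bar{U_1}(J_0\otimes\id)$ survive (yielding the generators $a_{ij}u_{kl}^*$ and $u_{ij}a_{kl}^*$), and then recover the diagonal families $a_{ij}a_{kl}^*$ and $u_{ij}u_{kl}^*$ from the unitarity relations of $u$ and of the $SU(2)$ blocks. Your closing identity $u_{ij}u_{kl}^*=\sum_m(u_{ij}a_{mn}^*)(a_{mn}u_{kl}^*)$ is in fact a slightly cleaner version of the corresponding step in the paper, but the argument is the same.
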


\begin{proof}
Let $ U_1 = \sum_{i,j=1}^2 e_{ij}\otimes1\otimes e_{11}\otimes a_{ij} +\sum_{i,j=3}^4 e_{ij}\otimes1\otimes e_{11}\otimes a_{ij},~ U_2 = \sum_{i,j=1}^4 e_{ij}\otimes1\otimes e_{22}\otimes u_{ij} $ and $ V $ be the unitary corepresentation of $ C(SU(2))\ast C(SU(2))\ast A_u(4) $ on $ L^2 ( (\A^{{\rm ev}})_\C, \tr    ). $
Then, by using \eqref{quaternion} and \eqref{m4c}, we have,
\begin{align*}
( \pi_{\C} \otimes \id ) V &=U_1\oplus U_2 \;.
\end{align*}
In order to calculate
$(\pi_\C\otimes\id)V(J_0\otimes\id)(\pi_\C\otimes\id)\bar{V}(J_0\otimes\id),$ we first observe that  $U_k(J_0\otimes\id)\bar{U_k}(J_0\otimes\id)=0,$ for $k=1,2.$ Thus the only contributing terms are $U_1(J_0\otimes\id)\bar{U_2}(J_0\otimes\id)$ and $U_2(J_0\otimes\id)\bar{U_1}(J_0\otimes\id).$

A direct computation, using equations \eqref{m4c} and \eqref{quaternion}, yields:
\begin{align*}
U_1(J_0\otimes\id)\bar{U_2}(J_0\otimes\id)&=
\sum_{k,l\in\{1,2\}~\mbox{or}~\{3,4\}}\sum_{i,j=1}^4 e_{kl}\otimes e_{ij}\otimes e_{11}\otimes a_{kl}u^*_{ij} \;;\\
U_2(J_0\otimes\id)\bar{U_1}(J_0\otimes\id)&=
\sum_{k,l\in\{1,2\}~\mbox{or}~\{3,4\}}\sum_{i,j=1}^4 e_{ij}\otimes e_{kl}\otimes e_{22}\otimes u_{ij}a^*_{kl} \;.
\end{align*}
Hence we have,
\begin{multline*}
\qquad
(\pi_\C\otimes\id)V(J_0\otimes\id)(\pi_\C\otimes\id)\bar{V}(J_0\otimes\id)\\
=\sum_{k,l\in\{1,2\}~\mbox{or}~\{3,4\}}\sum_{i,j=1}^4 e_{kl}\otimes e_{ij}\otimes e_{11}\otimes a_{kl}u^*_{ij}\\
+\sum_{k,l\in\{1,2\}~\mbox{or}~\{3,4\}}\sum_{i,j=1}^4 e_{ij}\otimes e_{kl}\otimes e_{22}\otimes u_{ij}a^*_{kl}.
\qquad
\end{multline*}
Thus the quantum gauge group of the spectral triple on $ \A^{{\rm ev}} $ is  generated by the elements $\{a_{kl}u^*_{ij}:~k,l=1,2~or~3,4,~i,j=1,,,4\}.$

The proof will be completed, if we show that elements of the form $a_{kl}a^*_{ij}$ for $i,j,k,l=1,2~or~3,4$ and $u_{ij}u^*_{kl}$ for $i,j,k,l=1,\ldots,4,$ belong to this CQG.
This we show as follows.

We note that since $(u_{ij})_{i,j=1}^4$ generate $A_u(4),$ $\sum_{j=1}^4 u_{ij}^*u_{ij}=1.$ Thus we have $a_{ij}a^*_{kl}=\sum_{m=1}^4 a_{ij}u^*_{pm}u_{pm}a^*_{kl},$ which proves that $a_{ij}a^*_{kl}$ belongs to the CQG.

Moreover, since the each of the sets $\{a_{11},a_{12}\}$ and $\{a_{33},a_{34}\}$ are set of generators of  $C(SU(2)),$ we have $a_{11}a_{11}^*+a_{12}^*a_{12}=1=a_{33}a^*_{33}+a_{34}a_{34}^*.$ Hence we have $u_{ij}u^*_{kl}=(u_{ij}a_{11})(a^*_{11}u_{kl})+(u_{ij}a_{12})(a^*_{12}u_{kl}),$ which proves that $u_{ij}u^*_{kl}$ belongs to the CQG. This proves the result.
\end{proof}

\subsection{The finite-dimensional spectral triple of the Standard Model}

For the spectral triple $(\A_F,H_F,D_F,\gamma_F,J_F)$
describing the internal space $F$ of the Standard Model
(cf.~\cite{CM08} and references therein)
we will use the notations of \cite{ludwik}.
In particular, the algebra $\A_F$ and the Hilbert space $H_F$ are given by
\begin{align*}
\A_F &=\C\oplus\Q\oplus M_3(\C) \;, & \hspace{-1cm} 
H_F  &=\C^2\otimes\C^4\otimes\C^4\otimes\C^3 \;.
\end{align*}
The real structure $J_F$ is the composition of the componentwise
complex conjugation on $H_F$ with the linear operator
$$
( J_F )_{0}:=
1\otimes 1\otimes {\footnotesize
\begin{pmatrix}
0 & 0 & 1 & 0 \\
0 & 0 & 0 & 1 \\
1 & 0 & 0 & 0 \\
0 & 1 & 0 & 0
\end{pmatrix}}
\otimes 1 \;,
$$
and an element $a=(\lambda,q,m)\in \A_F$ (with $\lambda\in\C$, $q\in\Q$ and $m\in M_3(\C)$) is represented by
\begin{align*}
\pi^F(a) &=
q\otimes 1\otimes e_{11}\otimes 1 +
{\small\begin{mat}
\lambda & 0 \\ 0 & \bar\lambda
\end{mat}}\otimes 1
\otimes e_{44}\otimes 1 \\[2pt]  & \qquad +
1\otimes
{\small
\begin{pmatrix}
\lambda & 0 & 0 & 0 \\
0 \\
0 && {\normalsize m} \\
0
\end{pmatrix}}
\otimes (e_{22}+e_{33})\otimes 1 \;,
\end{align*}
where $m$ is a $3\times 3$ block.
We will not need the grading and the Dirac operator.

\begin{prop}\label{quantum gauge group of the Standard Model}
The quantum gauge group $\hat{\mc{G}}(\A_F,J_F)$
of the finite geometry of the Standard Model is the projective version of $C(U(1))\ast C(SU(2))\ast A_u(3).$ 
\end{prop}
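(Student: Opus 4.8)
The plan is to mimic the proofs of Prop.~\ref{prop:qgg} and Prop.~\ref{prop:4.7}, now for the three-summand algebra $\A_F$. By the last Corollary of Sec.~\ref{sec:3.5} the quantum unitary group is $Q_{u,\R}(\A_F)\cong C(U(1))\ast C(SU(2))\ast A_u(3)$; I denote its generators by $z$ (a single unitary generating $C(U(1))$, attached to the $\C$ summand), the biunitary $(a_{ij})_{i,j=1,2}$ generating $C(SU(2))$ (attached to $\Q$), and $(u_{ij})_{i,j=1}^{3}$ generating $A_u(3)$ (attached to $M_3(\C)$). The first step is to record, as in Lemma~\ref{lemma:m4c}, the image $U^\pi:=(\pi^F_\C\otimes\id)(U)$ of the fundamental corepresentation. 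Using the three complexifications of Prop.~\ref{sigmaforrealalgebrasformulae} ($\C_\C=\C\oplus\C$, $\Q_\C\cong M_2(\C)$, $M_3(\C)_\C\cong M_3(\C)\oplus M_3(\C)$) together with the corepresentation structure coming from Prop.~\ref{prop:FKH}, I would write $U^\pi$ as an operator on $H_F=\C^2\otimes\C^4\otimes\C^4\otimes\C^3$ whose blocks are supported on the spectral projections $e_{11},e_{22},e_{33},e_{44}$ of the third tensor factor: the $C(SU(2))$ generators act on the first $\C^2$ at $e_{11}$; the $U(1)$ generator appears diagonally, as $\mathrm{diag}(z,z^*)$, at $e_{44}$ on the first factor and \emph{simultaneously} as the top-left corner of the $M_3(\C)$-block acting on the second $\C^4$ at $e_{22}+e_{33}$; and the $A_u(3)$ generators fill the lower $3\times 3$ corner of that same block.

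The second step is to feed $U^\pi$ into the identity $V=\epsilon\,U^\pi(J_0\otimes\id)\bar{U^\pi}(\bar{J_0}\otimes\id)$ of the Remark following Prop.~\ref{UUbarV}, with $\epsilon=1$ and $J_0=(J_F)_0=1\otimes 1\otimes P\otimes 1$, where $P$ is the permutation of the third factor exchanging the pairs $(1,3)$ and $(2,4)$. Conjugation by $J_0$ therefore interchanges the supports $e_{11}\leftrightarrow e_{33}$ and $e_{22}\leftrightarrow e_{44}$. Exactly as in Prop.~\ref{prop:4.7}, $J_0$ moves each block of $U^\pi$ onto a spectral projection disjoint from its original support, so the \emph{diagonal} contributions vanish and only the cross terms between blocks linked by $P$ survive. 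Reading these off — using Lemma~\ref{newlemmaforgaugegroupputbyme}.3 to pass to conjugates, i.e.~$z\mapsto z^*$, $a_{ij}\mapsto a_{ij}^*$, $u_{ij}\mapsto u_{ij}^*$ — yields the matrix coefficients of $V$ as products pairing the $C(SU(2))$ and $U(1)$ data at $e_{11},e_{44}$ with the conjugated $A_u(3)$ and $U(1)$ data coming from the $M_3(\C)$-block at $e_{22},e_{33}$.

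The third step is to identify the CQG generated by these coefficients with the projective version $P\bigl(C(U(1))\ast C(SU(2))\ast A_u(3)\bigr)$ of Def.~\ref{def:Ban}. The coefficients obtained directly are \emph{mixed} products such as $a_{ij}u_{kl}^*$, $z\,u_{kl}^*$ and $z\,a_{ij}^*$ (together with their adjoints); to generate all of $PQ$ one must additionally manufacture the remaining products $u_{ij}u_{kl}^*$, $a_{ij}a_{kl}^*$, etc. This is done exactly as at the end of Prop.~\ref{prop:4.7}, by inserting resolutions of the identity built from the defining relations $\sum_l u_{kl}^*u_{kl}=1$ in $A_u(3)$, $a_{11}a_{11}^*+a_{12}^*a_{12}=1$ in $C(SU(2))$, and $zz^*=z^*z=1$ in $C(U(1))$, so that each remaining product is rewritten as a sum of products of coefficients already shown to lie in the gauge group. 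Conversely $PQ$ visibly contains every coefficient of $V$, whence the two coincide.

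The step I expect to be the main obstacle is the bookkeeping in the first two steps, and in particular the feature absent from Prop.~\ref{prop:4.7} that the $U(1)$ generator $z$ is carried by \emph{two} different blocks of $U^\pi$ — diagonally at $e_{44}$ on the first factor and as the corner of the $M_3(\C)$-block at $e_{22}+e_{33}$ on the second factor (this is the hypercharge coupling to both the lepton and the quark sectors). One must track carefully how these two occurrences behave under $J_0$ and complex conjugation so that, after forming $V$, the surviving $z$-contributions pair with the conjugated $A_u(3)$ and $C(SU(2))$ generators in the correct starred/unstarred pattern, i.e.~so that every coefficient of $V$ is genuinely of the projective form $U_{pq}U_{rs}^*$. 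Once this pattern is pinned down, the concluding identification with $PQ$ is routine and parallels Prop.~\ref{prop:4.7}.
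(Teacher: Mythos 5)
Your proposal follows essentially the same route as the paper's proof: it computes $(\pi^F_\C\otimes\id)$ of the three corepresentations block by block, forms $V=U^\pi(J_0\otimes\id)\bar{U^\pi}(J_0\otimes\id)$ so that only the cross terms between $J_0$-linked spectral projections contribute nontrivial generators (mixed products such as $a_{kl}u_{ij}^*$, $z u_{ij}^*$, $a_{ij}z^*$), and then recovers the remaining entries of $U\otimes\bar U$ by inserting resolutions of the identity exactly as in Prop.~\ref{prop:4.7}. Your bookkeeping of the two occurrences of $z$ (as $\mathrm{diag}(z,z^*)$ at $e_{44}$ and as the corner of the $M_3(\C)$-block at $e_{22}+e_{33}$) is, if anything, slightly more careful than the paper's displayed formula \eqref{1st term of Standard Model}, and does not change the conclusion.
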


\begin{proof}
The proof is analogous to the one of Lemma \ref{lemma:m4c} and Prop.~\ref{prop:4.7}.

Let $W_{\C}$, $W_{\Q} $ and $W_{M_3 ( \C ) }$ be the unitary corepresentations of  $Q_{u,\R}(\C)\simeq C(U(1))$, $Q_{u,\R}(\Q)\simeq C(SU(2))$
and $Q_{u,\R}(M_3(\C))\simeq A_u(3)$ on $L^2(\C\oplus\C,\tr)$, $L^2(M_2(\C),\tr)$ and $L^2(M_3(\C)\oplus M_3(\C),\tr)$, respectively.
Proceeding as in the above-mentioned lemma, we can deduce that:
\begin{subequations}
\begin{align}
(\pi^{F}_\C\ot\id)W_{\C}
&=\{\mathbb{I}_2\ot \mathbb{I}_4\ot e_{44}\ot \mathbb{I}_3+\mathbb{I}_2\ot e_{11}\ot (e_{22}+e_{33})\ot \mathbb{I}_3\}\ot z ;
\label{1st term of Standard Model} \\
(\pi^F_\C\ot\id)W_{\Q} &= \sum_{i,j=1}^2 e_{ij}\ot \mathbb{I}_4\ot e_{11}\ot \mathbb{I}_3\ot a_{ij},  
\label{2nd term of Standard Model} \\
(\pi^F_\C\ot\id)W_{M_3 ( \C ) } &=\sum_{i,j=2}^4 \mathbb{I}_2\ot e_{ij}\ot (e_{22}+e_{33})\ot \mathbb{I}_3\ot u_{ij}, 
\label{3rd term of Standard Model}
\end{align}
\end{subequations}
where $\{a_{11},a_{12}\}$ are the generators of $C(SU(2))$, as in previous section,
$z$ is the unitary generator of $C(U(1))$
and $\{u_{ij}\}_{i,j=2}^4$ generate $A_u(3)$ .

Let $V=W_{\C} \oplus W_{\Q} \oplus W_{M_3 ( \C ) }.$
Then, using the equations \eqref{1st term of Standard Model},\eqref{2nd term of Standard Model} and \eqref{3rd term of Standard Model}, we have
\begin{align}
&\hspace{-1.5cm}(\pi_\C\ot\id)(V)((J_F)_0\ot\id)(\pi_\C\ot\id)(\bar{V})((J_F)_0\ot\id) \notag\\[8pt]
&\hspace{-10pt}
=\{\mathbb{I}_2\ot \mathbb{I}_4\ot e_{44}\ot \mathbb{I}_3+\mathbb{I}_2\ot e_{11}\ot e_{44}\ot \mathbb{I}_3\}\ot z\bar{z} \notag \\
& +\sum_{i,j=1}^2 e_{ij}\ot e_{11}\ot e_{33}\ot \mathbb{I}_3\ot za^*_{ij}+\sum_{i,j=2}^4 \mathbb{I}_2\ot e_{ij}\ot e_{44}\ot \mathbb{I}_3\ot zu^*_{ij} \notag\\
& +\sum_{i,j=1}^2 e_{ij}\ot e_{11}\ot e_{11}\ot \mathbb{I}_3\ot a_{ij}\bar{z}+\sum_{i,j=2}^4\sum_{k,l=1}^2 e_{kl}\ot e_{ij}\ot e_{11}\ot \mathbb{I}_3\ot a_{kl}u^*_{ij} \notag\\
& +\sum_{i,j=2}^4 \mathbb{I}_2\ot e_{ij}\ot e_{22}\ot \mathbb{I}_3\ot u^*_{ij}\bar{z}+\sum_{i,j=2}^4\sum_{k,l=1}^2 e_{kl}\ot e_{ij}\ot e_{33}\ot \mathbb{I}_3\ot u_{ij}a^*_{kl}.
\label{sango holo vober khela}
\end{align}
From equation \eqref{sango holo vober khela}, the result follows by arguing as in Proposition \ref{prop:4.7}.
\end{proof}

\section{Quantum automorphisms of real $C^*$-algebras}\label{sec:5}

The following observations is a simple restatement of Lemma 5.1 and 5.2 of \cite{ludwik}.

\begin{lemma}\label{lemma:qautreal}
Let $G$ be the group of automorphism of a real $C^*$-algebra $\A$. Then $C(G)$ is
the universal object in the category of commutative CQGs $Q$ with a coaction
$\alpha:\A_{\C}\to\A_{\C}\otimes Q$ such that
$(\sigma\otimes \ast_{C ( G ) })\circ\alpha=\alpha\circ\sigma$.
\end{lemma}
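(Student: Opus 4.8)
The plan is to pass through Gelfand duality so that the whole statement becomes a tautological universal property of the classical automorphism group $\mathsf{Aut}(\A)$, the only genuinely new ingredient being the translation of the reality condition $(\sigma\ot\ast)\circ\alpha=\alpha\circ\sigma$. The bulk of the dictionary between coactions of commutative CQGs and classical actions is already available as Lemmas~5.1 and~5.2 of \cite{ludwik}, so I would invoke those and concentrate on the $\sigma$-equivariance.

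First I would use the structure of commutative CQGs: every commutative $Q$ has the form $C(G')$ for a compact group $G'$, with $\Delta$ dual to the product of $G'$. A coaction $\alpha:\A_{\C}\to\A_{\C}\ot C(G')\cong C\big(G'\to\A_{\C}\big)$ then corresponds, by the commutative-case dictionary of \cite{ludwik}, to a continuous action $\beta:G'\to\mathsf{Aut}(\A_{\C})$ by $\ast$-automorphisms via $\alpha(a)(g)=\beta_g(a)$; evaluating the coassociativity identity $(\alpha\ot\id)\alpha=(\id\ot\Delta)\alpha$ at a pair $(g,h)$ recovers exactly the homomorphism property $\beta_{gh}=\beta_g\beta_h$.

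Next I would decode the reality condition, which is the heart of the matter. Writing $\alpha(a)=\sum_i a_i\ot f_i$ and using that $\sigma$ is antilinear while $\ast$ conjugates the values of functions, one gets $\big((\sigma\ot\ast)\alpha(a)\big)(g)=\sigma(\beta_g(a))$, whereas $\big(\alpha(\sigma(a))\big)(g)=\beta_g(\sigma(a))$. Hence $(\sigma\ot\ast)\circ\alpha=\alpha\circ\sigma$ is equivalent to $\sigma\beta_g=\beta_g\sigma$ for every $g\in G'$. Since $\A=\{x\in\A_{\C}:\sigma(x)=x\}$, commuting with $\sigma$ is the same as preserving $\A$, so each $\beta_g$ restricts to a real $\ast$-automorphism of $\A$; conversely complexification sends a real automorphism to a $\sigma$-commuting one, giving the group isomorphism $\mathsf{Aut}_\sigma(\A_{\C})\cong\mathsf{Aut}(\A)=G$. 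Thus an object of the category is precisely a compact group $G'$ together with a continuous homomorphism $\rho:=\beta:G'\to G$.

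Finally I would read off universality. The pair $(C(G),\alpha^{\mathrm{taut}})$, where $G=\mathsf{Aut}(\A)$ acts tautologically on $\A_{\C}$ (so $\rho=\id_G$), is an object by the previous step. For an arbitrary object $(C(G'),\alpha')$ with associated homomorphism $\rho:G'\to G$, the pullback $\rho^*:C(G)\to C(G')$ is a CQG morphism, and evaluation at $g'\in G'$ gives $(\id\ot\rho^*)\alpha^{\mathrm{taut}}=\alpha'$; conversely every CQG morphism $C(G)\to C(G')$ is dual to a unique continuous homomorphism $G'\to G$, and the intertwining condition forces it to equal $\rho$, yielding uniqueness. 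Hence $(C(G),\alpha^{\mathrm{taut}})$ is the universal (initial) object. I expect the only delicate point — rather than an obstacle — to be the bookkeeping of the reality condition above, together with the tacit use of compactness of $\mathsf{Aut}(\A)$ (guaranteed in the finite-dimensional setting) so that $C(G)$ is genuinely a CQG.
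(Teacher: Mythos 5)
Your proposal is correct, and it in fact supplies more than the paper does: the paper gives no proof of this lemma at all, merely asserting that it is ``a simple restatement of Lemma 5.1 and 5.2 of \cite{ludwik}''. Your route --- Gelfand duality to convert the coaction into a continuous action $\beta:G'\to\mathsf{Aut}(\A_{\C})$, then the computation showing that $(\sigma\ot\ast)\circ\alpha=\alpha\circ\sigma$ is equivalent to $\sigma\beta_g=\beta_g\sigma$ for all $g$, hence to each $\beta_g$ restricting to an automorphism of the fixed-point algebra $\A$ --- is exactly the intended content of that citation, and the decoding of the reality condition (the only step not already contained in the quoted lemmas of \cite{ludwik}) is carried out correctly, including the sign bookkeeping coming from the antilinearity of $\sigma$ and the conjugation $\ast$ on $C(G')$. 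Your closing caveats (compactness of $\mathsf{Aut}(\A)$ in the finite-dimensional setting, and the fact that a $\sigma$-commuting automorphism of $\A_{\C}$ is precisely the complexification of an automorphism of $\A$) are the right points to flag and both hold here.
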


In Sec.~\ref{sec:5.1}, we define the category of quantum automorphisms of a 
finite-dimensional real $C^*$-algebra and prove the existence of a universal object.
In Sec.~\ref{sec:5.2}, we compute the universal object for $M_n(\R)$ and $M_n(\Q)$.
In Sec.~\ref{sec:5.3}, we discuss quantum automorphisms and isometries of $M_n(\C)$, thought
of as a real algebra.

\subsection{Definition and existence of the quantum automorphism group}\label{sec:5.1}

Motivated by Lemma \ref{lemma:qautreal}, we define quantum automorphisms of a finite-dimensional real\linebreak $C^*$-algebra
as follows.

\begin{df}\label{def:5.2}
Let $\A$ be a finite-dimensional real $C^*$-algebra.
We denote by $\qaut(\A)$ the category whose objects $(Q,\alpha)$ are pairs, with $Q$ a CQG
and $\alpha:\A_{\C}\to\A_{\C}\otimes Q$ a coaction on $\A_{\C}$ preserving the trace and such that
\begin{equation}\label{eq:alphasigma}
(\sigma\otimes \ast)\circ\alpha=\alpha\circ\sigma \;,
\end{equation}
and morphisms $(Q,\alpha)\to (Q',\alpha')$ are CQGs homomorphisms $\phi:Q\to Q'$ intertwining
the coactions, i.e.~$(\id\otimes\phi)\alpha=\alpha'$.
The universal object in this category, if exists, will be denoted $A_{aut,\R}(\A)$ and called
the quantum automorphism group of $\A$.
\end{df}

Notice that the coinvariance of the trace is automatic in the commutative case, while in general it must
be imposed as an additional condition (necessary to prove the existence of the universal object).
Now we prove the existence of the quantum automorphism group for all finite dimensional real $C^*$-algebras.
We need a preliminary lemma.

\begin{lemma}\label{lemma:Iideal}
Let $ \A $ be a finite dimensional real $C^*$-algebra, $Q$ a CQG and $\alpha:\A_{\C}\to\A_{\C}\otimes Q$ a coaction,
and $I$ the $C^*$-ideal of $Q$ generated by elements
$$
( \varphi \otimes \id)\big(\alpha\circ\sigma ( a )  - ( \sigma \otimes \ast )\circ\alpha ( a )\big) \;,
$$
with $a\in\A_{\C}$ and $\varphi\in(\A_{\C})^*$. Then $I$ is a Woronowicz $C^*$-ideal of $Q$.
\end{lemma}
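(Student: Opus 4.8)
The plan is to mimic the proof of Lemma~\ref{realalgebraworidealoctober}, replacing the corepresentation defect used there by the coaction defect
\[
T(a):=\alpha\circ\sigma(a)-(\sigma\otimes\ast)\circ\alpha(a)\;\in\;\A_{\C}\otimes Q\;,\qquad a\in\A_{\C}\;,
\]
so that $I$ is the $C^*$-ideal generated by $\{(\varphi\otimes\id)T(a):a\in\A_{\C},\ \varphi\in(\A_{\C})^*\}$. Since the set $\{x\in Q:(\pi_I\otimes\pi_I)\Delta(x)=0\}$ is the kernel of the $\ast$-homomorphism $(\pi_I\otimes\pi_I)\circ\Delta$, hence a closed two-sided ideal of $Q$, to prove $\Delta(I)\subset\ker(\pi_I\otimes\pi_I)$ it suffices to check that this kernel contains the generators of $I$. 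Commuting the functional $\varphi$ past $\Delta$ (they act on different legs), one has $(\pi_I\otimes\pi_I)\Delta\big((\varphi\otimes\id)T(a)\big)=(\varphi\otimes\pi_I\otimes\pi_I)(\id\otimes\Delta)T(a)$, so the whole statement reduces to showing
\[
(\id\otimes\pi_I\otimes\pi_I)(\id\otimes\Delta)T(a)=0\qquad\text{for all }a\in\A_{\C}\;.
\]

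First I would compute $(\id\otimes\Delta)T(a)$ using only the structure on $Q$. For the first summand the coaction axiom $(\id\otimes\Delta)\alpha=(\alpha\otimes\id)\alpha$ gives $(\id\otimes\Delta)\alpha\sigma(a)=(\alpha\otimes\id)\alpha\sigma(a)$. For the second summand, since $\Delta$ is a $\ast$-homomorphism one has the intertwining $(\id\otimes\Delta)(\sigma\otimes\ast)=(\sigma\otimes\ast\otimes\ast)(\id\otimes\Delta)$ on $\A_{\C}\otimes Q$, whence, again by coassociativity,
\[
(\id\otimes\Delta)T(a)=(\alpha\otimes\id)\alpha\sigma(a)-(\sigma\otimes\ast\otimes\ast)(\alpha\otimes\id)\alpha(a)\;.
\]

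Next I would apply $(\id\otimes\pi_I\otimes\pi_I)$. Writing $\bar\alpha:=(\id\otimes\pi_I)\alpha$ and using $\pi_I\circ\ast=\ast\circ\pi_I$, the defining property of $I$ (that is, $(\id\otimes\pi_I)T=0$) descends to the single first-order identity $\bar\alpha\circ\sigma=(\sigma\otimes\ast)\circ\bar\alpha$ on $Q/I$. A leg-by-leg bookkeeping, based on the relation $(\id\otimes\pi_I\otimes\pi_I)(\alpha\otimes\id)=(\bar\alpha\otimes\id)(\id\otimes\pi_I)$, turns the images of the two summands into $(\bar\alpha\otimes\id)\bar\alpha\sigma(a)$ and $(\sigma\otimes\ast\otimes\ast)(\bar\alpha\otimes\id)\bar\alpha(a)$ respectively. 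A short computation applying the descended identity twice — once to the inner occurrence of $\bar\alpha\sigma$ and once after moving $\sigma$ into the first leg — shows that these two expressions coincide, so their difference is zero, as required.

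The step I expect to be the main obstacle is precisely this last one, because it is tempting but illegitimate to invoke coassociativity of the \emph{induced} map $\bar\alpha$ on $Q/I$: that would presuppose exactly the Woronowicz property we are trying to establish, rendering the argument circular. The clean route is to keep all coassociativity at the level of $\alpha$ on $Q$, pushing down to $Q/I$ only the first-order relation $\bar\alpha\circ\sigma=(\sigma\otimes\ast)\circ\bar\alpha$, and to track carefully the interaction of the antilinear $\sigma$ with the involution $\ast$ and with the two tensor legs on which $\pi_I$ acts.
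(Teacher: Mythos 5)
Your proposal is correct and follows essentially the same route as the paper's proof: the same defect element $T$, the same reduction to showing $(\id\otimes\pi_I\otimes\pi_I)(\id\otimes\Delta)(T)=0$ on generators, the same use of coassociativity of $\alpha$ and the $*$-homomorphism property of $\Delta$ at the level of $Q$, and the same double application of the descended relation $(\id\otimes\pi_I)\circ\alpha\circ\sigma=(\id\otimes\pi_I)\circ(\sigma\otimes\ast)\circ\alpha$. Your remark about avoiding circularity (never invoking coassociativity of the induced map on $Q/I$) is exactly the discipline the paper's computation observes.
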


\begin{proof}
Let $a\in\A_{\C}$ and
$$
T:=\alpha\circ\sigma ( a )  - ( \sigma \otimes \ast )\circ\alpha ( a ) \;.
$$
The ideal $I$ is generated by
$$
t_\varphi= (\varphi\otimes\id)(T) \;,
\qquad
\varphi\in (\A_{\C})^*\;.
$$
Let $\pi_I:Q\to Q/I$ be the quotient map.
We need to prove that
$$
(\pi_I\otimes\pi_I)\Delta(t_\varphi)=
(\varphi\otimes\pi_I\otimes\pi_I)(\id\otimes\Delta)(T)
$$
is zero for all $\varphi$. It is enough to prove that
$(\id\otimes\pi_I\otimes\pi_I)(\id\otimes\Delta)(T)=0$.
We have
\begin{align*}
(\id\otimes\Delta)(T)
&=(\id\otimes\Delta)\circ\alpha\circ\sigma(a)-(\sigma\otimes\Delta\circ*)\circ\alpha(a) \\
&=(\id\otimes\Delta)\circ\alpha\circ\sigma(a)-(\sigma\otimes*\otimes*)\circ(\id\otimes\Delta)\circ\alpha(a) \\
&=(\alpha\otimes \id)\circ\alpha\circ\sigma(a)-(\sigma\otimes*\otimes*)\circ(\alpha\otimes \id)\circ\alpha(a) \;,
\end{align*}
where we used the fact that $\Delta$ is a $*$-homomorphism (and $\alpha$ a coaction).
Therefore, since
$$
(\id\otimes\pi_I)\circ\alpha\circ\sigma=
(\id\otimes\pi_I)\circ(\sigma\otimes*)\circ\alpha \;,
$$
we get
\begin{align*}
(\id\otimes\pi_I\otimes\pi_I)(\id\otimes\Delta)(T) \hspace{-3cm} \\
&=(\id\otimes\pi_I\otimes\id)(\alpha\otimes \id)\circ(\id\otimes\pi_I)\circ\alpha\circ\sigma(a)
-(\id\otimes\pi_I\otimes\pi_I)\big((\sigma\otimes*)\alpha\otimes*\big)\circ\alpha(a) \\
&=(\id\otimes\pi_I\otimes\id)(\alpha\otimes \id)\circ(\id\otimes\pi_I)\circ(\sigma\otimes*)\circ\alpha(a)
-(\id\otimes\pi_I\otimes\pi_I)(\alpha\circ\sigma\otimes*)\circ\alpha(a) \\
&=(\id\otimes\pi_I\otimes\pi_I)(\alpha\circ\sigma\otimes*)\circ\alpha(a)
-(\id\otimes\pi_I\otimes\pi_I)(\alpha\circ\sigma\otimes*)\circ\alpha(a)
\\
&=0 \;.
\end{align*}
This concludes the proof.
\end{proof}

If $\A$ is a finite-dimensional complex $C^*$-algebra, we denote by $\mathbf{C_{aut}}(\A)$ the category of CQGs with a coaction on $\A$ preserving the trace. It was proved in \cite{Wan98a} that this category has a universal object, here denoted by $A_{aut}(\A)$.

\begin{prop}\label{prop:QautR}
For a finite dimensional real $C^* $ algebra $ \A, $
the universal object in the category $\qaut(\A)$ exists and it is given by $A_{aut,\R}(\A)=A_{aut}(\A)/I$,
where $I$ is the ideal of $Q=A_{aut}(\A)$ defined in Lemma \ref{lemma:Iideal}.
\end{prop}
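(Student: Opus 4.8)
The plan is to realize $\qaut(\A)$ as a subcategory of the category $\mathbf{C_{aut}}(\A_{\C})$ of trace-preserving coactions on the complexification $\A_{\C}$, and to obtain its universal object by quotienting the universal object of $\mathbf{C_{aut}}(\A_{\C})$ by the ideal that forces the compatibility \eqref{eq:alphasigma}. By Wang's theorem recalled just before the statement, $\mathbf{C_{aut}}(\A_{\C})$ has a universal object; write $Q:=A_{aut}(\A_{\C})$ (this is the object abbreviated $A_{aut}(\A)$ in the statement) and let $\alpha_0:\A_{\C}\to\A_{\C}\otimes Q$ be its universal coaction. Every object $(Q',\alpha')$ of $\qaut(\A)$ is in particular an object of $\mathbf{C_{aut}}(\A_{\C})$ satisfying the extra relation \eqref{eq:alphasigma}, so the situation is formally identical to the existence theorem for $Q_{u,\R}(\A,F)$ proved above, with \eqref{eq:alphasigma} now playing the role of \eqref{eq:rela}.

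First I would form the ideal $I$ of Lemma \ref{lemma:Iideal} applied to $Q=A_{aut}(\A_{\C})$ and $\alpha_0$; that lemma already guarantees that $I$ is a Woronowicz $C^*$-ideal, so $Q/I$ is a CQG with induced coproduct and the quotient map $\pi_I:Q\to Q/I$ is a morphism of CQGs. I would then set $\tilde\alpha:=(\id\otimes\pi_I)\circ\alpha_0:\A_{\C}\to\A_{\C}\otimes(Q/I)$; this is a coaction of $Q/I$ because the quotient by a Woronowicz ideal carries a coaction of $Q$ to a coaction of $Q/I$, and it preserves the trace since $\alpha_0$ does and $\pi_I$ is a unital $*$-homomorphism. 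The whole point of the construction of $I$ is that the generators $(\varphi\otimes\id)\bigl(\alpha_0\circ\sigma(a)-(\sigma\otimes\ast)\circ\alpha_0(a)\bigr)$ lie in $I$, so that after composing with $\pi_I$ we obtain $(\id\otimes\pi_I)\circ\alpha_0\circ\sigma=(\id\otimes\pi_I)\circ(\sigma\otimes\ast)\circ\alpha_0$, i.e.\ $\tilde\alpha\circ\sigma=(\sigma\otimes\ast)\circ\tilde\alpha$. Hence $(Q/I,\tilde\alpha)$ is an object of $\qaut(\A)$.

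For universality, given any $(Q',\alpha')$ in $\qaut(\A)$, universality of $A_{aut}(\A_{\C})$ in $\mathbf{C_{aut}}(\A_{\C})$ yields a unique CQG morphism $\phi:Q\to Q'$ with $(\id\otimes\phi)\alpha_0=\alpha'$. The crux is to show that $\phi$ annihilates $I$. Since $\phi$ is a $*$-homomorphism acting on the second leg, it interchanges with $(\sigma\otimes\ast)$: for $x\otimes q\in\A_{\C}\otimes Q$ one has $(\id\otimes\phi)(\sigma\otimes\ast)(x\otimes q)=\sigma(x)\otimes\phi(q)^{*}=(\sigma\otimes\ast)(\id\otimes\phi)(x\otimes q)$. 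Applying $(\id\otimes\phi)$ to the defect element $\alpha_0\circ\sigma(a)-(\sigma\otimes\ast)\circ\alpha_0(a)$ and using $(\id\otimes\phi)\alpha_0=\alpha'$ therefore gives $\alpha'\circ\sigma(a)-(\sigma\otimes\ast)\circ\alpha'(a)$, which vanishes because $\alpha'$ satisfies \eqref{eq:alphasigma}. Thus $\phi$ kills the generators of $I$ and factors as $\phi=\psi\circ\pi_I$ for a unique CQG morphism $\psi:Q/I\to Q'$, and $(\id\otimes\psi)\tilde\alpha=\alpha'$. Uniqueness of $\psi$ follows as in the earlier existence theorem: a second such $\psi'$ would make $\psi'\circ\pi_I$ a morphism $Q\to Q'$ intertwining $\alpha_0$ and $\alpha'$, contradicting uniqueness of $\phi$.

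I expect the only delicate point to be this verification that $\phi$ kills $I$, i.e.\ the interchange of $\phi$ with $(\sigma\otimes\ast)$; everything else is the formal ``quotient by a relation'' argument already carried out for $Q_{u,\R}(\A,F)$. Because $\sigma$ acts on the first tensor leg while $\ast$ and $\phi$ act on the second, this interchange is routine, so no step presents a genuine obstacle and the proposition is essentially the automorphism-group analogue of the quantum-unitary-group existence theorem.
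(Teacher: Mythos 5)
Your proposal is correct and follows essentially the same route as the paper: show that $A_{aut}(\A_{\C})/I$ is an object of $\qaut(\A)$ via Lemma \ref{lemma:Iideal}, then use the universality of $A_{aut}(\A_{\C})$ in $\mathbf{C_{aut}}(\A_{\C})$ to obtain a morphism that kills $I$ and hence factors through the quotient. You merely spell out more explicitly the step the paper leaves implicit, namely that $\phi$ annihilates the generators of $I$ because it commutes with $(\sigma\otimes\ast)$ and $\alpha'$ satisfies \eqref{eq:alphasigma}.
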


\begin{proof}
By Lemma \ref{lemma:Iideal}, $A_{aut}(\A)/I$ is a CQG, and by construction it satisfies \eqref{eq:alphasigma}.
Hence it is an object in $\qaut(\A)$.
Any object $Q$ of $\qaut(\A)$ is an object in $\mathbf{C_{aut}}(\A)$, hence
there exists a unique $C^*$-homomorphism $\phi:A_{aut}(\A)\to Q$ intertwining the coactions.
Since $Q$ satisfies \eqref{eq:alphasigma}, $\phi$ has $I$ in the kernel and then factorizes through
a map $A_{aut}(\A)\to A_{aut}(\A)/I\to Q$, proving that $A_{aut}(\A)/I$ is universal.
\end{proof}

\begin{rem}
The simplest example is $\A=\R^n$. In this case $\A_\C=\C^n$ is generated by $n$ orthogonal
projections $\delta_i$ with sum $1$, and $\sigma$ is the complex conjugation. In particular $\sigma(\delta_i)=\delta_i$.
The CQG $A_{aut}(\C^n)=A_s(n)$ is the quantum permutation group of \cite{Wan98a} (we use the notations of \cite{Ban05a}):
it is generated by a ``magic unitary'' $u=(u_{ij})$ i.e.~a matrix whose entries are projections $u_{ij}=u_{ij}^*=u_{ij}^2$
and on each row and column of $u$ these projections sum up to $1$. The coaction on $\C^n$ is $\delta_i\mapsto\sum_{j=1}^n
\delta_j\otimes u_{ij}$, and \eqref{eq:alphasigma} is trivially satisfied. Hence, $\qaut(\R^n)=A_s(n)$.
\end{rem}

\subsection{Quantum automorphisms of $M_n(\R)$ and $M_n(\Q)$}\label{sec:5.2}

We will need the following lemma for the purpose of the computation of the quantum automorphism group.

\begin{lemma}\label{francescoold1}
Let $Q$ be a $C^*$-algebra, which is generated by elements $a^{kl}_{ij}$ satisfying the equations (4.1-4.3) of \cite{Wan98a} such that $b^{ kl}_{ij}=(a^{kl}_{ij})^*$ also satisfy (4.1-4.3) of \cite{Wan98a}. Then $Q$ is commutative. 
\end{lemma}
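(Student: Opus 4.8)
The plan is to read Wang's relations (4.1--4.3) of \cite{Wan98a} as the statement that $\alpha(e_{ij})=\sum_{k,l}e_{kl}\otimes a^{kl}_{ij}$ is a unital, trace-preserving $*$-coaction of $Q$ on $M_n(\C)$; concretely they amount to the multiplicativity relation $\sum_m a^{km}_{ij}a^{mt}_{pq}=\delta_{jp}\,a^{kt}_{iq}$, the involution relation $(a^{kl}_{ij})^*=a^{lk}_{ji}$, and the normalization relations $\sum_k a^{kk}_{ij}=\delta_{ij}$ and $\sum_i a^{kl}_{ii}=\delta_{kl}$. First I would observe that, writing $b^{kl}_{ij}=(a^{kl}_{ij})^*$, the involution and normalization relations for $b$ follow automatically from those for $a$; hence the only genuine extra hypothesis is that $b$ satisfies multiplicativity, equivalently that $\beta:=(\id\otimes *)\circ\alpha$ is again a homomorphism.

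Next I would exploit this single extra relation. Taking the adjoint of the multiplicativity relation for $a$ and rewriting it by means of the involution relation yields $\sum_m a^{tm}_{qp}a^{mk}_{ji}=\delta_{jp}\,a^{tk}_{qi}$, whereas multiplicativity for $b$ reads $\sum_m a^{mk}_{ji}a^{tm}_{qp}=\delta_{jp}\,a^{tk}_{qi}$. The two identities have the \emph{same} right-hand side but the two factors in the \emph{opposite} order, so the hypothesis forces the two orderings of the summed product to coincide. By the universality of $A_{\mathrm{aut}}(M_n(\C))$ (Theorem 4.1 of \cite{Wan98a}) together with Prop.~\ref{thm:Ban}, $Q$ is a quotient of $A_{\mathrm{aut}}(M_n(\C))\simeq PA_u(n)$, and under this identification $a^{kl}_{ij}$ is the image of $u_{ki}u^*_{lj}$, where $u=(u_{ij})$ is the fundamental corepresentation of $A_u(n)$.

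The hard part is to upgrade this coincidence of \emph{summed} products into \emph{pointwise} commutativity of the generators: abstractly the relation just obtained is only the vanishing of a sum over one index of commutators, and at the projective level of $A_{\mathrm{aut}}$ there is no orthogonality relation that isolates a single index. To break the summation I would lift the computation to $A_u(n)$, where the unitarity of $u$ and of $\bar u$ supplies $\sum_p u^*_{mp}u_{rp}=\delta_{mr}$; translating the coincidence of orderings into the $u$'s and contracting against this relation turns it into the half-liberation relation \eqref{eq:fine}, namely $u_{ab}u^*_{cd}u_{ef}=u_{ef}u^*_{cd}u_{ab}$. Once \eqref{eq:fine} and its adjoint $u^*_{ab}u_{cd}u^*_{ef}=u^*_{ef}u_{cd}u^*_{ab}$ are available, a direct double application gives $(u_{ki}u^*_{lj})(u_{k'i'}u^*_{l'j'})=(u_{k'i'}u^*_{l'j'})(u_{ki}u^*_{lj})$: one first uses \eqref{eq:fine} to move $u_{k'i'}$ to the front of the leading triple, and then its adjoint to bring $u^*_{l'j'}$ next to it. Thus any two generators of $Q$ commute and $Q$ is commutative; the essential obstacle, as indicated, is precisely this sum-breaking step, where the orthogonality of the $n$-dimensional fundamental corepresentation (visible only after passing to the free unitary group, not at the projective level) is what resolves the single summation.
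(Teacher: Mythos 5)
Your reduction of the hypothesis to the vanishing of the summed commutator $\sum_m\bigl(a^{tm}_{qp}a^{mk}_{ji}-a^{mk}_{ji}a^{tm}_{qp}\bigr)=0$ is correct, and you rightly identify the sum-breaking as the crux; but the way you propose to break it does not work. The relations you need to exploit hold in $Q$, which is a quotient of $PA_u(n)$ by some ideal $K$, whereas the generators $u_{ij}$ and the orthogonality relation $\sum_p u^*_{mp}u_{rp}=\delta_{mr}$ that you want to contract against live in $A_u(n)$, where your extra relations fail (for $n\geq 3$ the algebra $PA_u(n)\simeq A_{\mathrm{aut}}(M_n(\C))$ is noncommutative, so the $b$-relations certainly do not hold there). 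The only way to have both ingredients in one algebra is to pass to $A_u(n)/\widetilde{K}$, with $\widetilde{K}$ the ideal of $A_u(n)$ generated by $K$; your contraction would then prove the half-liberation relation \eqref{eq:fine}, and hence commutativity, in $A_u(n)/\widetilde{K}$. But the image of $PA_u(n)$ there is $PA_u(n)/(\widetilde{K}\cap PA_u(n))$, which is a priori a \emph{proper} quotient of $Q=PA_u(n)/K$ (nothing forces $\widetilde{K}\cap PA_u(n)=K$), so commutativity of that image does not give commutativity of $Q$. Since you never exhibit a unitary matrix with entries in an algebra containing $Q$ whose products recover the $a^{kl}_{ij}$ and which satisfies the needed orthogonality, the contraction step is not available, and the proof has a genuine gap at exactly the point you flag as essential.

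The paper closes this gap without ever leaving $Q$, by an associativity/Fubini trick on a triple product: it evaluates the double sum $\sum_{m,r}a^{ij}_{kr}a^{mn}_{rl}a^{pm}_{qs}$ with two different bracketings, first collapsing the $r$-sum via the relation for $a$ (giving $a^{in}_{kl}a^{pj}_{qs}$) and alternatively first collapsing the $m$-sum via the relation for $b$ (giving $a^{ij}_{ks}a^{pn}_{ql}$). This yields a sum-free identity, which after relabelling reads $a^{kl}_{ij}a^{rs}_{mn}=a^{ks}_{in}a^{rl}_{mj}$; repeating the computation with the roles of $a$ and $b$ exchanged produces $a^{rs}_{mn}a^{kl}_{ij}=a^{ks}_{in}a^{rl}_{mj}$, and comparing the two shows the generators commute. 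The lesson is that when a vanishing sum of commutators arises from two multiplicativity constraints, one should look for a single double sum whose two bracketings each kill one summation, rather than trying to isolate a single term of the sum.
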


\begin{proof}
By (4.2) of \cite{Wan98a}, we have $\sum_r a^{ij}_{kr}a^{mn}_{rl}=\delta_{jm}a^{in}_{kl},$ while the equation (4.1) of \cite{Wan98a} for $b^{ kl}_{ij}=a^{lk}_{ji}$ gives $\sum_m a^{mn}_{rl}a^{pm}_{qs}=\delta_{rs}a^{pn}_{ql}$.
Using these two equations, we get 
\begin{align*}
\sum_m(\sum_r a^{ij}_{kr}a^{mn}_{rl})a^{pm}_{qs}  &=\sum_m\delta_{jm} a^{in}_{kl}a^{pm}_{qs}=a^{in}_{kl}a^{pj}_{qs} \;,\\
\sum_r a^{ij}_{kr} (\sum_m a^{mn}_{rl}a^{pm}_{qs})&=\sum_r a^{ij}_{kr}\delta_{rs} a^{pn}_{ql}=a^{ij}_{ks}a^{pn}_{ql} \;.
\end{align*}
Hence $a^{kl}_{ij}a^{rs}_{mn}=a^{ks}_{in}a^{rl}_{mj}.$ A similar computation, but exchanging the roles of $a^{kl}_{ij}$ and $b^{ kl}_{ij}=(a^{kl}_{ij})^*=a^{lk}_{ji}$, gives $a^{rs}_{mn}a^{kl}_{ij}=a^{ks}_{in}a^{rl}_{mj}.$ These two equations together imply that the generators of $Q$ commute: $a^{rs}_{mn}a^{kl}_{ij}=a^{kl}_{ij}a^{rs}_{mn}$. This proves that the $C^*$-algebra $Q$ is commutative.
\end{proof}

\begin{prop}\label{prop:QautRofMnR}
$A_{aut,\R}(M_n(\R))$ is isomorphic to $ C ( PO(n) ).$
\end{prop}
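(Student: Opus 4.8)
The plan is to exploit the description $A_{aut,\R}(M_n(\R))=A_{aut}(M_n(\C))/I$ from Proposition~\ref{prop:QautR} and to show that the relation \eqref{eq:alphasigma} collapses the quotient to a commutative algebra, which must then be $C(G)$ with $G$ the classical $*$-automorphism group of $M_n(\R)$.

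First I would fix notation. By Proposition~\ref{sigmaforrealalgebrasformulae}(1) the complexification of $M_n(\R)$ is $M_n(\C)$ with $\sigma$ the entrywise complex conjugation, so that $\sigma(e_{ij})=e_{ij}$ for every matrix unit. Writing the coaction of $A_{aut}(M_n(\C))$ as $\alpha(e_{ij})=\sum_{k,l}e_{kl}\otimes a^{kl}_{ij}$, with the $a^{kl}_{ij}$ the generators satisfying relations (4.1--4.3) of \cite{Wan98a}, I would evaluate both sides of \eqref{eq:alphasigma} on $e_{ij}$:
\begin{equation*}
\alpha\circ\sigma(e_{ij})=\sum_{k,l}e_{kl}\otimes a^{kl}_{ij}, \qquad (\sigma\otimes *)\circ\alpha(e_{ij})=\sum_{k,l}e_{kl}\otimes (a^{kl}_{ij})^* .
\end{equation*}
Hence the generators of the ideal $I$ of Lemma~\ref{lemma:Iideal} are exactly the elements $a^{kl}_{ij}-(a^{kl}_{ij})^*$, so in $A_{aut,\R}(M_n(\R))=A_{aut}(M_n(\C))/I$ the images of the generators become self-adjoint.

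Next I would invoke Lemma~\ref{francescoold1}. In the quotient the $a^{kl}_{ij}$ still satisfy (4.1--4.3), and since they are now self-adjoint the adjoint family $b^{kl}_{ij}=(a^{kl}_{ij})^*=a^{kl}_{ij}$ coincides with the original one and therefore also satisfies (4.1--4.3). Thus the hypotheses of Lemma~\ref{francescoold1} hold and $A_{aut,\R}(M_n(\R))$ is commutative. A commutative CQG is $C(G)$ for a compact group $G$; being commutative, $A_{aut,\R}(M_n(\R))$ lies in the commutative subcategory of $\qaut(M_n(\R))$, in which, by Lemma~\ref{lemma:qautreal}, the universal object is $C(\mathrm{Aut}(M_n(\R)))$. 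Comparing the universality of $A_{aut,\R}(M_n(\R))$ in the full category $\qaut(M_n(\R))$ with that of $C(\mathrm{Aut}(M_n(\R)))$ in the commutative subcategory yields mutually inverse morphisms, hence $A_{aut,\R}(M_n(\R))\cong C(\mathrm{Aut}(M_n(\R)))$.

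Finally I would identify the classical group. Since the involution of $M_n(\R)$ is the transpose, Skolem--Noether gives every automorphism in the form $a\mapsto g a g^{-1}$ with $g\in GL_n(\R)$, and compatibility with the transpose forces $g^tg$ to be central, i.e.\ $g$ is a positive scalar times an orthogonal matrix; since $g$ and $-g$ induce the same map one gets $\mathrm{Aut}(M_n(\R))=O(n)/\{\pm\mathbb{I}_n\}=PO(n)$, whence $A_{aut,\R}(M_n(\R))\cong C(PO(n))$. The step most deserving of care is matching Wang's index conventions so that \eqref{eq:alphasigma} translates precisely into self-adjointness of the generators; once that is done, commutativity is immediate from Lemma~\ref{francescoold1} and the remaining identification of $PO(n)$ is routine.
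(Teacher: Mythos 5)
Your proposal is correct and follows essentially the same route as the paper's proof: condition \eqref{eq:alphasigma} with $\sigma(e_{ij})=e_{ij}$ forces the generators $a^{kl}_{ij}$ to be self-adjoint, Lemma~\ref{francescoold1} then gives commutativity, and the quotient is identified with $C(\mathrm{Aut}(M_n(\R)))=C(PO(n))$. You simply spell out two steps the paper leaves implicit (the reduction to Lemma~\ref{lemma:qautreal} in the commutative subcategory and the Skolem--Noether identification of $\mathrm{Aut}(M_n(\R))$ with $PO(n)$), and both are handled correctly.
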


\begin{proof}
We use Prop.~\ref{prop:QautR} to determine $A_{aut,\R}(M_n(\R))$.
Let $\{a^{kl}_{ij}\}_{i,j,k,l=1}^n$ be the generators of the quantum automorphism group $A_{aut}(M_n(\C))$, in the notation of Theorem 4.1 of \cite{Wan98a},
with coaction $\alpha(e_{ij})=\sum_{kl}e_{kl}\otimes a^{kl}_{ij}$ in the canonical basis $e_{ij}$ of $M_n(\C)$.
The additional condition \eqref{eq:alphasigma} gives:
$$
a^{kl}_{ij}=(a^{kl}_{ij})^* \;.
$$
Thus, both $a^{kl}_{ij}$ and $b^{kl}_{ij}=(a^{kl}_{ij})^*$ also satisfy equations (4.1-4.5) of \cite{Wan98a} and so by Lemma \ref{francescoold1}, $Q$ is commutative.
Thus $A_{aut,\R}(M_n(\R))=C(G)$, where $G$ is the automorphism group of $M_n(\R)$, i.e.~$G=PO(n)$.
\end{proof}

\begin{prop}
$A_{aut,\R}(M_n(\Q))$ is isomorphic with  $C(PSp(n)),$ where $Sp(n)$ is the quaternionic unitary group.
\end{prop}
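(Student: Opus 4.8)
The plan is to mimic the proof of Proposition~\ref{prop:QautRofMnR} for $M_n(\R)$, replacing ordinary complex conjugation by the involution $\sigma$ appropriate to $M_n(\Q)$. By Proposition~\ref{sigmaforrealalgebrasformulae} the complexification is $(M_n(\Q))_\C\cong M_{2n}(\C)$ and $\sigma(m)=S\bar m S$ with $S:=\sigma_2\otimes\mathbb{I}_n$, which satisfies $S=S^*=S^{-1}$ and $S^t=\bar S=-S$. By Proposition~\ref{prop:QautR} we have $A_{aut,\R}(M_n(\Q))=A_{aut}(M_{2n}(\C))/I$, with $I$ the Woronowicz $C^*$-ideal of Lemma~\ref{lemma:Iideal} generated by condition \eqref{eq:alphasigma}. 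So I would start from the generators $a^{kl}_{ij}$ ($i,j,k,l=1,\dots,2n$) of $A_{aut}(M_{2n}(\C))$ satisfying Wang's relations (4.1--4.5) of \cite{Wan98a}, with coaction $\alpha(e_{ij})=\sum_{kl}e_{kl}\otimes a^{kl}_{ij}$ written in the standard matrix units.

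The first step is to make condition \eqref{eq:alphasigma} explicit. Since $\sigma(e_{ij})=Se_{ij}S=\sum_{p,q}S_{pi}S_{jq}\,e_{pq}$, expanding both sides of $(\sigma\otimes *)\circ\alpha=\alpha\circ\sigma$ on $e_{ij}$ and matching coefficients of the basis elements, then using $S^2=\mathbb{I}_{2n}$ to invert, gives
\begin{equation*}
(a^{mn}_{ij})^*=\sum_{k,l,p,q}S_{mk}\,S_{ln}\,S_{pi}\,S_{jq}\,a^{kl}_{pq}.
\end{equation*}
This is the $M_n(\Q)$ analogue of the relation $a^{kl}_{ij}=(a^{kl}_{ij})^*$ found in the $M_n(\R)$ case: here the adjoint of a generator is not the generator itself, but an explicit $S$-twisted linear combination of the (non-adjoint) generators.

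The crucial step is to show that $b^{kl}_{ij}:=(a^{kl}_{ij})^*$ again satisfies Wang's relations, so that Lemma~\ref{francescoold1} applies and forces $A_{aut}(M_{2n}(\C))/I$ to be commutative. The point I would exploit is that the formula above identifies $b^{mn}_{ij}$ with the coefficients $\tilde a^{mn}_{ij}$ obtained by writing the \emph{same} coaction $\alpha$ in the conjugated system of matrix units $\{Se_{ij}S\}$: indeed $(Se_{ij}S)(Se_{kl}S)=\delta_{jk}\,Se_{il}S$, $(Se_{ij}S)^*=Se_{ji}S$ and $\tr(Se_{ii}S)=\tr(e_{ii})=1$, so $\{Se_{ij}S\}$ is a system of matrix units with the same trace as $\{e_{ij}\}$, related to it by the unitary $S$. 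Since Wang's relations merely encode that $\alpha$ is a trace-preserving coaction by a unital $*$-homomorphism -- a property intrinsic to the matrix-unit multiplication table, involution and trace, and independent of the chosen basis -- the coefficients $\tilde a=b$ satisfy them as well. I expect this change-of-basis identification to be the main obstacle: one must compute $\tilde a^{mn}_{ij}$ from $\alpha(Se_{ij}S)$ and verify that it coincides termwise with the expression for $(a^{mn}_{ij})^*$, which is where $S^t=-S$ and $S^2=\mathbb{I}_{2n}$ enter.

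With commutativity in hand, $A_{aut,\R}(M_n(\Q))=C(G)$ for $G$ the classical automorphism group of $M_n(\Q)$, exactly as in Lemma~\ref{lemma:qautreal} and Proposition~\ref{prop:QautRofMnR}. Finally I would identify $G$: since $M_n(\Q)$ is a central simple $\R$-algebra, Skolem--Noether gives that every automorphism is inner, $x\mapsto gxg^{-1}$, and compatibility with the involution forces $g$ to be unitary, i.e.\ $g\in Sp(n)$; two such $g$ induce the same automorphism precisely when they differ by a central unitary, and the centre of $Sp(n)$ is $\{\pm\mathbb{I}\}\cong\Z_2$. Hence $G=Sp(n)/\Z_2=PSp(n)$, so $A_{aut,\R}(M_n(\Q))\cong C(PSp(n))$, as claimed.
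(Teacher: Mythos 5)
Your proposal is correct and follows the same skeleton as the paper's proof: realize $A_{aut,\R}(M_n(\Q))$ as the quotient of $A_{aut}(M_{2n}(\C))$ by the relation coming from \eqref{eq:alphasigma} via Prop.~\ref{prop:QautR}, deduce commutativity through Lemma \ref{francescoold1}, and then identify the resulting classical group as $PSp(n)$. Where you diverge is in the execution of the central commutativity step. The paper writes the relation \eqref{eq:alphasigma} in the explicit index form $a^{\tilde{k}\tilde{l}}_{\tilde{r}\tilde{s}}=(-1)^{r+s+k+l}a^{lk}_{sr}$ (with $\tilde m=m+(-1)^{m+1}$), substitutes it into Wang's relation (4.1), and tracks the signs by hand until it recovers the identity $\sum_m a^{mk}_{rl}a^{pm}_{qs}=\delta_{rs}a^{pk}_{ql}$, after which the proof of Lemma \ref{francescoold1} is repeated verbatim. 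You instead observe that $\{Se_{ij}S\}$ is again a system of matrix units with the same multiplication table, involution and trace, and that the relation forced by \eqref{eq:alphasigma} identifies $b^{kl}_{ij}=(a^{kl}_{ij})^*$ with the coefficients of the \emph{same} coaction in that conjugated basis; since Wang's relations (4.1--4.3) are basis-independent statements about $\alpha$ being a unital trace-preserving $*$-homomorphism, $b$ satisfies them automatically and Lemma \ref{francescoold1} applies as a black box. This is more conceptual and avoids the sign bookkeeping (your equality $S_{mk}S_{ln}=S_{km}S_{nl}$, from $S^t=-S$, is exactly where the paper's sign $(-1)^{r+s+k+l}$ gets absorbed); it also generalizes more readily to other involutions of the form $m\mapsto T\bar m T^{-1}$. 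Finally, you supply the Skolem--Noether identification $\mathrm{Aut}(M_n(\Q))=Sp(n)/\{\pm 1\}=PSp(n)$, which the paper simply quotes as well known. Both arguments are sound.
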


\begin{proof}
Similarly to Prop.~\ref{prop:QautRofMnR},
let $\{a^{kl}_{ij}\}_{i,j,k,l=1}^{2n}$ be the generators of the quantum automorphism group $A_{aut}(M_{2n}(\C))$, in the notation of Theorem 4.1 of \cite{Wan98a},
with coaction $\alpha(e_{ij})=\sum_{kl}e_{kl}\otimes a^{kl}_{ij}$ in the canonical basis $e_{ij}$ of $M_{2n}(\C)$.
$A_{aut,\R}(M_n(\Q))$ is the quotient of $A_{aut}(M_{2n}(\C))$ by the relation \eqref{eq:alphasigma}, where now $\sigma$ is given by
Prop.~\ref{sigmaforrealalgebrasformulae}, point 3. This gives the relations
$$
a^{\tilde{k}\tilde{l}}_{\tilde{r}\tilde{s}}=(-1)^{r+s+k+l}a^{lk}_{sr} \;,
$$
where for $m\in\N$ we set $\tilde{m}:=m+(-1)^{m+1}$. We claim that $ A_{aut,\R}(M_n(\Q)) $ is commutative $C^*$-algebra. 
The proof of this claim is similar to the proof of Lemma \ref{francescoold1}. By (4.2) of \cite{Wan98a},
we have $\sum_ra_{kr}^{ij}a_{rl}^{mp}=\delta_{jm}a_{kl}^{ip}$,
while substituting $a^{lk}_{sr}=(-1)^{r+s+k+l}a^{\tilde{k}\tilde{l}}_{\tilde{r}\tilde{s}}$ in (4.1) we get
$$
\sum\nolimits_m
(-1)^{2m+k+l+p+q+r+s}a^{\tilde m\tilde k}_{\tilde r\tilde l}a^{\tilde p\tilde m}_{\tilde q\tilde s}=
(-1)^{k+l+p+q}\delta_{\tilde r\tilde s}a^{\tilde p\tilde k}_{\tilde q\tilde l} \;.
$$
Since the map $m\mapsto\tilde m$ is a bijection ($m$ runs from $1$ to $2n$),
summing over $m$ or $\tilde m$ makes no difference. Furthermore since $(-1)^{2m}=1$,
renaming all the labels we get the relation $(-1)^{r+s}\sum_ma^{mk}_{rl}a^{pm}_{qs}=\delta_{rs}a^{pk}_{ql}$.
Now we multiply both sides by $(-1)^{r+s}$ and get 
$\sum_ma^{mk}_{rl}a^{pm}_{qs}=(-1)^{r+s}\delta_{rs}a^{pk}_{ql}=\delta_{rs}a^{pk}_{ql}$
exactly as in the proof of Lemma \ref{francescoold1}. 
Repeating verbatim the proof of Lemma \ref{francescoold1} one can conclude that $ A_{aut,\R}(M_n(\Q)) $ is commutative, hence isomorphic to $C(G)$ where $G=PSp(n)$ is the well known classical group of automorphism of $M_n(\Q)$.
\end{proof}

\subsection{Quantum symmetries of $M_n(\C)$}\label{sec:5.3}

We need a preliminary lemma.

\begin{lemma}\label{lemma:5.9}
Let $\gamma$ denote the non-trivial generator of $\Z_2$, with action of $\gamma$
on $PU(n)$ induced by the map $u\mapsto\bar u$ on $U(n)$.
Then, the group of real automorphisms of $M_n(\C)$
is the semidirect product $PU(n)\rtimes\Z_2$.
\end{lemma}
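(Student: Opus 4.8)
The plan is to classify all $\R$-linear $*$-automorphisms of $M_n(\C)$ (regarded as a real $C^*$-algebra) according to how they act on the center $Z=\C\,\mathbb{I}_n$. As a real $*$-algebra $Z$ is a copy of $\C$, whose only $\R$-algebra automorphisms (necessarily fixing $\R\,\mathbb{I}_n$ pointwise) are the identity and complex conjugation. Since an algebra automorphism always preserves the center, every real automorphism $\phi$ of $M_n(\C)$ restricts to one of these two on $Z$, and thus satisfies either $\phi(i\,\mathbb{I}_n)=i\,\mathbb{I}_n$ (the \emph{$\C$-linear} case) or $\phi(i\,\mathbb{I}_n)=-i\,\mathbb{I}_n$ (the \emph{antilinear} case).

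First I would treat the $\C$-linear case. A $\C$-linear $*$-automorphism $\phi$ of $M_n(\C)$ is, by the Skolem--Noether theorem, of the form $\phi(a)=vav^{-1}$ for some $v\in GL_n(\C)$. Imposing $*$-compatibility $\phi(a^*)=\phi(a)^*$ forces $v^*v$ to be central, hence a positive scalar; after rescaling $v$ I may take $v=u\in U(n)$, so that $\phi=\mathrm{Ad}_u$ with $\mathrm{Ad}_u(a)=uau^*$. As $\mathrm{Ad}_u=\mathrm{id}$ exactly when $u\in U(1)\,\mathbb{I}_n$, the $\C$-linear automorphisms form precisely the group $PU(n)=U(n)/U(1)$.

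Next, denote by $\theta$ the conjugation automorphism $\theta(a)=\bar a$ (entrywise); one checks directly that $\theta$ is an antilinear $*$-automorphism with $\theta^2=\mathrm{id}$, so $\theta$ realizes the generator $\gamma$ of $\Z_2$. Given any antilinear automorphism $\phi$, the composite $\phi\circ\theta$ is $\C$-linear, hence equals $\mathrm{Ad}_u$ for some $u\in U(n)$ by the previous step; therefore $\phi(a)=u\bar a u^*$. This shows every real automorphism is either $a\mapsto uau^*$ or $a\mapsto u\bar a u^*$, so $PU(n)$ is a normal subgroup of index two and $\theta$ splits the quotient $\Z_2$. To identify the action I would compute, using $\overline{u^*}=(\bar u)^*$, that $\theta\,\mathrm{Ad}_u\,\theta^{-1}(a)=\overline{u\,\bar a\,u^*}=\bar u\,a\,\bar u^*=\mathrm{Ad}_{\bar u}(a)$; thus conjugation by $\gamma$ implements exactly $u\mapsto\bar u$ on $PU(n)$, giving $\mathrm{Aut}_\R(M_n(\C))\cong PU(n)\rtimes\Z_2$ with the stated action.

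The argument is largely routine, and the one genuinely delicate point is the unitarization step: one must verify carefully that $*$-compatibility forces the Skolem--Noether implementer $v$ to satisfy $v^*v\in\C\,\mathbb{I}_n$, and that the residual ambiguity is exactly $U(1)$, so that the $\C$-linear part is $PU(n)$ and not a larger or smaller quotient. Everything else — that $\theta$ is a $*$-automorphism of order two, that the two classes exhaust all real automorphisms, and that the extension splits — follows from short direct computations.
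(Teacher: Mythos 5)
Your proof is correct and follows essentially the same route as the paper's: split the real automorphisms into $\C$-linear and antilinear ones according to the image of $i$ (equivalently, the induced automorphism of the center), identify the $\C$-linear ones with $PU(n)$, realize the antilinear ones as the entrywise conjugation $\theta$ composed with an inner automorphism, and read off the $\Z_2$-action from $\theta\,\mathrm{Ad}_u\,\theta=\mathrm{Ad}_{\bar u}$. The only difference is that you spell out the Skolem--Noether and unitarization step that the paper compresses into the assertion that any complex linear automorphism is inner of the form $a\mapsto uau^*$ with $u\in U(n)$.
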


\begin{proof}
In this proof, we will identify a scalar $\lambda\in\C$ with the matrix
$\lambda\mathbb{I}_n$ in $M_n(\C)$.

Let $\varphi:M_n(\C)\to M_n(\C)$ be an automorphism of real algebras. Then $\varphi(ia)=\varphi(i)\varphi(a)$ for any $a\in M_n(\C)$.
$\varphi(i)$ must be central, hence proportional to the identity, and satisfy $\varphi(i)^2=-1$.
Thus we have only two cases: $\varphi(i)=i$ (and $\varphi$ is complex linear) or $\varphi(i)=-i$
(and $\varphi$ is antilinear). Any antilinear automorphism is of the form $a\mapsto \varphi(Ja)$,
where $Ja=\bar a$ is a canonical antilinear automorphism and $\varphi$ is complex linear.
Any complex linear automorphism $\varphi$ is inner, hence of the form $\varphi_u(a)=uau^*$ with $u\in U(n)$,
and $J\varphi_u J=\varphi_{\bar u}$. This induces an action of $\Z_2$ on $PU(n)$. The classical
group of automorphisms is then $PU(n)\rtimes\Z_2$.
\end{proof}

\begin{prop}
$A_{aut,\R}(M_n(\C))$ is isomorphic to $C(PU(n)\rtimes\Z_2)$.
\end{prop}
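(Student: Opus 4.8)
The plan is to reduce everything to showing that $A_{aut,\R}(M_n(\C))$ is \emph{commutative}; the rest is then formal. By Prop.~\ref{prop:QautR} the universal object is $A_{aut}(M_n(\C)\oplus M_n(\C))/I$, and it is an \emph{initial} object of the whole category $\qaut(M_n(\C))$. If it turns out to be commutative, then it is in particular an initial object of the subcategory of commutative objects of $\qaut(M_n(\C))$. By Lemma~\ref{lemma:qautreal} that subcategory has initial object $C(G)$, with $G$ the group of real automorphisms of $M_n(\C)$ (trace-coinvariance being automatic in the commutative case), and by Lemma~\ref{lemma:5.9} we have $G=PU(n)\rtimes\Z_2$. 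Since two initial objects of the same category are uniquely isomorphic, this yields $A_{aut,\R}(M_n(\C))\cong C(PU(n)\rtimes\Z_2)$, as claimed.

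To prove commutativity I would follow the pattern of Prop.~\ref{prop:QautRofMnR} and reduce to Lemma~\ref{francescoold1}. Recall from Prop.~\ref{sigmaforrealalgebrasformulae} that $(M_n(\C))_\C=M_n(\C)\oplus M_n(\C)$ with $\sigma(m_1\oplus m_2)=\overline{m_2}\oplus\overline{m_1}$, so that on the matrix units $e_{ij}^{(1)}=e_{ij}\oplus 0$ and $e_{ij}^{(2)}=0\oplus e_{ij}$ the antilinear map $\sigma$ simply interchanges the two blocks. Writing a coaction $\alpha$ as
$$
\alpha(e_{ij}^{(1)})=\sum\nolimits_{k,l}e_{kl}^{(1)}\otimes A^{kl}_{ij}+\sum\nolimits_{k,l}e_{kl}^{(2)}\otimes B^{kl}_{ij},
$$
the compatibility condition \eqref{eq:alphasigma} forces $\alpha(e_{ij}^{(2)})=(\sigma\otimes *)\alpha(e_{ij}^{(1)})$, i.e.\ the images of the second block are governed by the adjoints $(A^{kl}_{ij})^*$ and $(B^{kl}_{ij})^*$. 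Multiplicativity of $\alpha$ within the first block produces Wang-type relations for the $A^{kl}_{ij}$ (and, separately, for the $B^{kl}_{ij}$); the decisive point is that multiplicativity within the \emph{second} block, after the $\sigma$-relation is inserted, yields the \emph{companion} relations for the adjoints $(A^{kl}_{ij})^*$ and $(B^{kl}_{ij})^*$. This is precisely the mechanism by which self-adjointness of the generators was exploited in Prop.~\ref{prop:QautRofMnR}: the role there played by $a^{kl}_{ij}=(a^{kl}_{ij})^*$ is here played by the identification between the block-$1$ and block-$2$ data coming from \eqref{eq:alphasigma}. In addition, block orthogonality $e_{ij}^{(1)}e_{pq}^{(2)}=0$ gives relations of the form $\sum_l A^{kl}_{ij}(B^{lm}_{pq})^*=0$, which I expect to force the $A$- and $B$-generators to commute across the two blocks.

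Once both the generators and their adjoints satisfy the relations (4.1)--(4.3) of \cite{Wan98a}, the computation in the proof of Lemma~\ref{francescoold1} applies and shows that $A_{aut}(M_n(\C)\oplus M_n(\C))/I$ is commutative, exactly as for $M_n(\R)$ and $M_n(\Q)$. The main obstacle is organisational rather than conceptual: unlike those two cases, $(M_n(\C))_\C$ is a \emph{direct sum} rather than a single matrix algebra, so one must track the block-preserving ($A$) and block-swapping ($B$) parts separately and check that the cross relations make the whole algebra, and not merely each block, commutative. The cleanest way to handle this is to first restrict $\alpha$ to the centre $\C p_1\oplus\C p_2$: this restriction factors through $A_{aut}(\C^2)=A_s(2)=C(\Z_2)$, which is already commutative and accounts for the classical $\Z_2$ (the block swap), while on each block one is then reduced to a genuine trace-preserving coaction of $M_n(\C)$, to which Lemma~\ref{francescoold1} applies directly and produces the $PU(n)$ part.
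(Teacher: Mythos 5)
Your overall route is the paper's: realize $A_{aut,\R}(M_n(\C))$ via Prop.~\ref{prop:QautR} as the quotient of $A_{aut}(M_n(\C)\oplus M_n(\C))$ by the $\sigma$-relation, prove that this quotient is commutative by feeding Wang's relations into Lemma~\ref{francescoold1}, and then identify the resulting commutative CQG with $C(G)$, $G=PU(n)\rtimes\Z_2$, using Lemma~\ref{lemma:qautreal} and Lemma~\ref{lemma:5.9}. Your reading of \eqref{eq:alphasigma} is also correct: in the notation of Theorem 5.1 of \cite{Wan98a} it gives $a^{kl}_{ij,11}=(a^{kl}_{ij,22})^*$ and $a^{kl}_{ij,12}=(a^{kl}_{ij,21})^*$, so the block-preserving generators $a^{kl}_{ij}:=a^{kl}_{ij,11}$ and the block-swapping generators $b^{kl}_{ij}:=a^{kl}_{ij,12}$ each satisfy (4.1)--(4.3) together with their adjoints, and Lemma~\ref{francescoold1} makes each of the two subalgebras $Q_1=C^*(\{a^{kl}_{ij}\})$ and $Q_2=C^*(\{b^{kl}_{ij}\})$ commutative.

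The genuine gap is the cross-term step, which you only ``expect'' to work. The paper closes it by an explicit computation: relation (5.1) of \cite{Wan98a} gives $\sum_m a^{mn}_{rl}b^{pm}_{qs}=0$ (and three variants), and then the same associativity trick as in Lemma~\ref{francescoold1} --- evaluating $\sum_{m,r}a^{ij}_{kr}a^{mn}_{rl}b^{pm}_{qs}$ in the two possible orders --- yields $a^{in}_{kl}b^{pj}_{qs}=0$, and symmetrically $b^{rs}_{pq}a^{kl}_{ij}=0$. So the cross relations do not merely force commutation, they force \emph{annihilation}: $Q=Q_1\oplus Q_2$ as a $C^*$-algebra, which is commutative because each summand is. Without some such computation your argument is incomplete, since commutativity of $Q_1$ and of $Q_2$ separately says nothing about products $ab$. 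Relatedly, your proposed ``cleanest way'' --- restricting $\alpha$ to the centre to peel off $A_s(2)=C(\Z_2)$ and then treating each block as carrying a trace-preserving coaction of $M_n(\C)$ --- does not work as stated for an arbitrary object of the category: the coaction genuinely mixes the two blocks (the $B$-part), so $\alpha$ restricted to one matrix block is not a coaction on that block. Only after the relations $ab=ba=0$ are established, i.e.\ only in the universal object, does the picture decompose in the way you describe.
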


\begin{proof}
Recall that for $\A=M_n(\C)$, $\A_{\C}=M_n(\C)\oplus M_n(\C)$
and $\sigma(a,b)=(\bar b,\bar a)$. 

Let $\{a^{kl}_{ij,xy}\}_{i,j,k,l=1}^n,~x,y=1,2$ be the generators of the quantum automorphism group $Q=\A_{aut}(M_n(\C)\oplus M_n(\C))$, in the notation of Theorem 5.1 of \cite{Wan98a}.
Notice that for any fixed $r,s$, from equations (5.1)-(5.3) we see
that $a^{kl}_{ij,rs}$ satisfy (4.1)-(4.3).

It follows from Prop.~\ref{prop:QautR} that $A_{aut,\R}(M_n(\C))$ is the quotient of  
$Q$ by the relation (coming from \eqref{eq:alphasigma}):
$$
a^{kl}_{ij,11}=(a^{kl}_{ij,22})^* \;,\qquad
a^{kl}_{ij,12}=(a^{kl}_{ij,21})^* \;.
$$
Since both $a^{kl}_{ij,11}$ and $a^{kl}_{ij,22}=(a^{kl}_{ij,11})^*$
satisfy the equations (4.1-4.3) of \cite{Wan98a}, by Lemma \ref{francescoold1},
they generate a commutative $C^*$-subalgebra $Q_1\subset Q$.
Similarly $a^{kl}_{ij,12}$ generate a commutative $C^*$-subalgebra
$Q_2\subset Q$.

Note that 
$a^{kl}_{ij,22}=(a^{kl}_{ij,11})^*=a^{lk}_{ji,11}$ and
$a^{kl}_{ij,21}=(a^{kl}_{ij,12})^*=a^{lk}_{ji,12}$, thus
$a^{kl}_{ij,11}$ and $a^{kl}_{ij,12}$ are a complete set
of generators, and $A_{aut,\R}(M_n(\C))$ is a quotient of the free product $Q_1*Q_2$.
With $a^{kl}_{ij}:=a^{kl}_{ij,11}$ and $b^{kl}_{ij}:=a^{kl}_{ij,12}$,
from (5.1) of \cite{Wan98a} we get
\begin{align*}
\sum\nolimits_ma^{km}_{ij,11}a^{ml}_{pq,12}=
\sum\nolimits_ma^{km}_{ij}b^{ml}_{pq} &=0 \;,&
\left(\sum\nolimits_ma^{km}_{pq,21}a^{ml}_{ij,22}\right)^*=
\sum\nolimits_ma^{ml}_{ij}b^{km}_{pq} &=0 \;,\\
\sum\nolimits_ma^{km}_{ij,12}a^{ml}_{pq,11}=
\sum\nolimits_mb^{km}_{ij}a^{ml}_{pq} &=0 \;,&
\left(\sum\nolimits_ma^{km}_{pq,22}a^{ml}_{ij,21}\right)^*=
\sum\nolimits_mb^{ml}_{ij}a^{km}_{pq} &=0  \;.
\end{align*}
plus the similar ones where one sums over lower indices.
Now, with a trick similar to the proof of Lemma \ref{francescoold1},
we compute
\begin{align*}
\sum_m(\sum_r a^{ij}_{kr}a^{mn}_{rl})b^{pm}_{qs}  &=\sum_m\delta_{jm} a^{in}_{kl}b^{pm}_{qs}=a^{in}_{kl}b^{pj}_{qs} \;,\\
\sum_r a^{ij}_{kr} (\sum_m a^{mn}_{rl}b^{pm}_{qs})&=\sum_r a^{ij}_{kr}\cdot 0=0 \;.
\end{align*}
This proves that $a^{kl}_{ij}b^{rs}_{pq}=0$ for all the values of
the labels. Repeating the same, but exchanging the role of $a$ and
$b$, we get $b^{rs}_{pq}a^{kl}_{ij}=0$ too. Hence, as a $C^*$-algebra
$Q=Q_1\oplus Q_2$ is commutative.
The CQG isomorphism $Q\simeq C(PU(n)\rtimes\Z_2)$ follows from Lemma \ref{lemma:5.9}.
\end{proof}

We end this article by identifying some categories of ``quantum symmetries'' of $M_n(\C)$,
whose universal objects are the half liberated quantum unitary and orthogonal
groups and the free quantum orthogonal group.

\begin{prop}
Consider the category of pairs $(Q,U)$, where $Q$ has a unitary corepresentation $U$ on $\C^n$ so that the adjoint action extends to a quantum automorphism of the real $C^*$-algebra $M_n(\C).$ Then the universal object in this category exists and is isomorphic to $A_u^*(n).$ 

Consider the subcategory consisting of pairs $(Q,U),$ such that $U\circ J=(J\ot \ast)\circ U,$ where $J$ is the componetwise conjugation on $\C^n.$ Then, the universal object in this category exists and is isomorphic to $A_o^*(n).$
\end{prop}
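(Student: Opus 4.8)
The plan is to turn the single hypothesis ``the adjoint action extends to a quantum automorphism of the real $C^*$-algebra $M_n(\C)$'' into explicit relations on the entries of $U$ and then match them against the presentation \eqref{eq:fine} of $A_u^*(n)$. Write $U=\sum_{ij}e_{ij}\otimes u_{ij}\in M_n(\C)\otimes Q$, so that the adjoint action on the complex algebra $M_n(\C)$ is $\mathrm{Ad}_U(e_{kl})=\sum_{i,p}e_{ip}\otimes u_{ik}u_{pl}^*$. By Prop.~\ref{sigmaforrealalgebrasformulae} we have $(M_n(\C))_\C=M_n(\C)\oplus M_n(\C)$ with $\sigma(a\oplus b)=\bar b\oplus\bar a$, and since $\mathrm{Ad}_U$ is $\C$-linear it extends to a map $\beta=\mathrm{Ad}_U\oplus\gamma$ on $(M_n(\C))_\C$ preserving the two summands. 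Requiring $\beta$ to be a quantum automorphism in the sense of Def.~\ref{def:5.2}, i.e.\ a trace-preserving coaction satisfying \eqref{eq:alphasigma}, pins $\gamma$ down completely: a short computation gives $\gamma(e_{kl})=\sum_{i,p}e_{ip}\otimes u_{pl}u_{ik}^*$, so that, writing $a^{ip}_{kl}:=u_{ik}u_{pl}^*$ for the generators of $\alpha=\mathrm{Ad}_U$, the generators of $\gamma$ are exactly the adjoints $(a^{ip}_{kl})^*$.

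For the universality direction I would start from an arbitrary object $(Q,U)$. Since the objects are CQGs, unitarity of $U$ together with trace-preservation of $\mathrm{Ad}_U$ forces $\bar U$ to be unitary as well, by the antipode argument of Remark~\ref{rem:3.3}; thus $U$ is biunitary and one has the orthogonality relations $\sum_q u_{qj}u_{qk}^*=\delta_{jk}$ and $\sum_k u_{qk}^*u_{mk}=\delta_{qm}$. Because $\mathrm{Ad}_U$ is a trace-preserving coaction, the $a^{ip}_{kl}=u_{ik}u_{pl}^*$ satisfy the relations (4.1)--(4.3) of \cite{Wan98a}; the extra requirement that $\gamma$ also be a coaction means that the conjugate elements $(a^{ip}_{kl})^*=u_{pl}u_{ik}^*$ satisfy those same relations. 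The main step, and the hard part, is to extract \eqref{eq:fine} from the latter. Spelling out relation (4.2) for the adjoints, namely $\sum_q u_{qj}u_{pi}^*u_{sl}u_{qk}^*=\delta_{jk}\,u_{sl}u_{pi}^*$, then multiplying on the right by $u_{mk}$, summing over $k$ and using $\sum_k u_{qk}^*u_{mk}=\delta_{qm}$, one obtains $u_{mj}u_{pi}^*u_{sl}=u_{sl}u_{pi}^*u_{mj}$, which is precisely the half-commutation relation $ab^*c=cb^*a$ of \eqref{eq:fine}. Hence $Q$ is a quotient of $A_u^*(n)$ and $u_{ij}\mapsto U_{ij}$ is the unique intertwining morphism. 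I would remark here that, by Lemma~\ref{francescoold1}, the $a^{ip}_{kl}$ themselves generate a \emph{commutative} $C^*$-subalgebra (the image of the adjoint action, a classical automorphism action), whereas the entries $u_{ij}$ generate the genuinely noncommutative, half-liberated algebra; this is why one lands on $A_u^*(n)$ rather than on a commutative object.

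Conversely I would verify that $A_u^*(n)$ with its fundamental biunitary $u$ is itself an object: reversing the computation above, \eqref{eq:fine} together with biunitarity makes $\gamma$ multiplicative (one checks $\sum_q u_{qj}u_{pi}^*u_{sl}u_{qk}^*=\delta_{jk}u_{sl}u_{pi}^*$ directly from $ab^*c=cb^*a$ and $\sum_q u_{qj}u_{qk}^*=\delta_{jk}$), so that $\beta=\mathrm{Ad}_U\oplus\gamma$ is a genuine $\sigma$-equivariant trace-preserving coaction. The coassociativity of $\beta$, the remaining relations (4.1), (4.3), and the $*$-compatibility of $\gamma$ are checked in the same routine manner, and I would only sketch them. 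Combined with the previous paragraph, this identifies $A_u^*(n)$ as the universal object.

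For the second statement I would simply translate the extra condition. Viewing $U$ as the map $e_k\mapsto\sum_i e_i\otimes u_{ik}$, one has $U\circ J(e_k)=\sum_i e_i\otimes u_{ik}$ and $(J\otimes *)\circ U(e_k)=\sum_i e_i\otimes u_{ik}^*$, so $U\circ J=(J\otimes *)\circ U$ is equivalent to $u_{ij}=u_{ij}^*$ for all $i,j$. Imposing self-adjointness of the generators on top of the relations of $A_u^*(n)$ is exactly the definition of $A_o^*(n)$, so the universal object of the subcategory is the quotient $A_u^*(n)/\langle u_{ij}-u_{ij}^*\rangle\cong A_o^*(n)$, completing the proof.
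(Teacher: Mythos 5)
Your argument is correct, and with respect to this paper it fills a gap rather than duplicates one: the authors omit the proof entirely, deferring to Prop.~5.3 of \cite{ludwik}, and the computation you give --- forcing the $\sigma$-equivariant extension to split as $\beta=\mathrm{Ad}_U\oplus\gamma$ with $\gamma(e_{kl})=\sum_{i,p}e_{ip}\otimes u_{pl}u_{ik}^*$, and then extracting the half-commutation relations $ab^*c=cb^*a$ from multiplicativity of $\gamma$ together with biunitarity --- is precisely the index manipulation that the deferred reference carries out. The only two points worth spelling out in a final write-up are (a) that $\beta$ necessarily preserves the two summands of $(M_n(\C))_\C$, since $\beta(1\oplus 0)=\mathrm{Ad}_U(1)\oplus 0=(1\oplus 0)\otimes 1$ fixes the central projections, so the decomposition $\beta=\mathrm{Ad}_U\oplus\gamma$ is not an extra assumption, and (b) the biunitarity of $u$, which you correctly derive from trace-preservation of $\mathrm{Ad}_U$ plus the antipode argument of Remark~\ref{rem:3.3}; both are minor and your proof handles them.
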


\noindent
We omit the proof of this proposition since it is very similar to the proof of Prop.~5.3 of \cite{ludwik}.

We recall now briefly the notion of quantum isometry group from \cite{Gos10}.
This generalizes the notion of orientation preserving isometries of a closed
Riemannian spin manifold to the framework of spectral triples and CQGs.

For a finite-dimensional odd spectral triple $(\A,\HH,0,J)$, with Dirac operator $D=0$,
the definition is as follows \cite{Gos10}.
As usual, we choose an orthonormal basis for $\HH$, and denote by $J_0$ the composition
of $J$ with the complex conjugation on the components of $\HH$.

\begin{df}\label{def:qisot}
A pair $(Q,U)$ is a quantum family of
``orientation and real structure preserving isometries'' for
$(\A,\HH,0,J)$ if
$Q$ is a unital $C^*$-algebra and $U$ is a unitary element of $\B(\HH)\otimes Q$
such that
$$
(J_0\otimes 1_Q)\bar U=U(J_0\otimes 1) \;,\qquad
\mathrm{Ad}_U(\A)\subset\A\otimes Q \;.
$$
\end{df}

\noindent
The category with objects $ ( Q, U ) $ as in Def.~\ref{def:qisot} and morphisms given by unital $*$-ho\-mo\-mor\-phisms intertwining the corepresentations,
has a universal object denoted by $\qisot(\A,\HH,0,J)$.
It has a structure of a CQG and the associated unitary operator, say $U_0$, is a faithful unitary corepresentation. The quantum isometry group $\qiso(\A,\HH,0,J)$ is the Woronowicz $C^*$-subalgebra of $\qisot(\A,\HH,0,J)$ generated by the elements $ \{ ( \varphi \otimes \mathrm{id} ) \mathrm{Ad_{U_0}} ( a ) : a \in \A, ~ \varphi \in \A^* \} $.

\begin{prop}
Let $J$ be the antilinear map on $\C^n$ given by complex conjugation on the components. Then $\qisot(M_n(\C),\C^n,0,J)=A_o(n)$ and $\qiso(M_n(\C),\C^n,0,J)=PA_o(n)$. %
\end{prop}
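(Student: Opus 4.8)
The plan is to unwind the two conditions of Definition \ref{def:qisot} for this particular spectral triple and show that they collapse exactly to the defining relations of $A_o(n)$.

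First I would pin down $J_0$. Writing $J=J_0\circ C$ with $C$ the componentwise complex conjugation (as in the proof of Prop.~\ref{UUbarV}), the hypothesis that $J$ \emph{is} componentwise conjugation forces $J_0=\mathbb{I}_n$ (and $\epsilon=1$, consistent with $J^2=\mathbb{I}$). Now take an arbitrary $U=\sum_{ij}e_{ij}\otimes u_{ij}\in\B(\HH)\otimes Q=M_n(\C)\otimes Q$. By Lemma \ref{newlemmaforgaugegroupputbyme}(3) one has $\bar U=\sum_{ij}e_{ij}\otimes u_{ij}^*$, so with $J_0=\mathbb{I}_n$ the real-structure condition $(J_0\otimes 1_Q)\bar U=U(J_0\otimes 1)$ reads simply $\bar U=U$, i.e.\ $u_{ij}^*=u_{ij}$ for all $i,j$. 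Moreover, since $\A=M_n(\C)$ acts on $\HH=\C^n$ by its defining representation we have $\pi(\A)=\B(\HH)$, and hence the condition $\mathrm{Ad}_U(\A)\subset\A\otimes Q$ holds automatically for \emph{every} $U\in\B(\HH)\otimes Q$. Thus the objects of the category are exactly the pairs $(Q,U)$ with $U$ unitary and self-adjoint entries $u_{ij}$ --- which, by Definition \ref{def:AoF} with $F=\mathbb{I}_n$, are precisely the relations defining $A_o(n)$.

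With this I would check universality directly: setting $U_0=\sum_{ij}e_{ij}\otimes u_{ij}$ for the canonical generators of $A_o(n)$, the pair $(A_o(n),U_0)$ is an object of the category, and for any other object $(Q,V)$ with $V=\sum_{ij}e_{ij}\otimes v_{ij}$ the universal property of $A_o(n)$ supplies a unique $*$-homomorphism $u_{ij}\mapsto v_{ij}$ intertwining the corepresentations. This identifies $\qisot(M_n(\C),\C^n,0,J)=A_o(n)$, with $U_0$ a faithful corepresentation. For the isometry group I would then compute the adjoint coaction: a direct calculation gives $\mathrm{Ad}_{U_0}(e_{ij})=\sum_{kl}e_{kl}\otimes u_{ki}u_{lj}^*$, and the coefficients $u_{ki}u_{lj}^*$ are exactly the matrix entries of the corepresentation $U_0\otimes\bar{U_0}$. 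Hence by Definition \ref{def:Ban} the Woronowicz subalgebra generated by $\{(\varphi\otimes\id)\mathrm{Ad}_{U_0}(a)\}$ is $PA_o(n)$, giving $\qiso(M_n(\C),\C^n,0,J)=PA_o(n)$.

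I do not anticipate a genuine obstacle here: once the two conditions are made explicit the argument is essentially bookkeeping. The one point needing care is getting $J_0=\mathbb{I}_n$ right and retaining the $*$ on $Q$ in the definition of $\bar U$ (Lemma \ref{newlemmaforgaugegroupputbyme}(3)), so that the real-structure condition reduces to self-adjointness of the entries rather than to some twisted relation; mishandling this would replace $A_o(n)$ by the wrong free quantum group.
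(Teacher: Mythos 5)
Your proposal is correct and follows essentially the same route as the paper: unwind the real-structure condition with $J_0=\mathbb{I}_n$ to get $\bar U=U$, i.e.\ self-adjoint unitary entries, identify the universal object with $A_o(n)$, and pass to the projective version via $\mathrm{Ad}_{U_0}$. You actually supply two details the paper leaves implicit (that $\mathrm{Ad}_U(\A)\subset\A\otimes Q$ is automatic because $\pi(M_n(\C))=\B(\C^n)$, and the explicit computation showing the coefficients of $\mathrm{Ad}_{U_0}$ generate $PA_o(n)$), both of which are handled correctly.
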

\begin{proof}
Let $\{e_i\}_{i=1}^n$ be a basis of $\C^n$ and $U(e_i)=\sum_{j}e_j\otimes a_{ji},$ where $a_{ji}\in Q.$ Then the relation $UJ(e_i)=\{(J\otimes\ast)U\}(e_i)$ implies that $a_{ij}=a^\ast_{ij}$ for all $i,j=1,2,\ldots,n.$ Thus the matrix $u:=(a_{ji})$ is a unitary and satisfies $u=\overline{u}$ implying that $Q$ is a quantum subgroup of $A_o(n).$ Since $A_o(n)$ belongs to the category, this concludes the proof.
\end{proof}

\smallskip

\begin{center}
\textsc{Acknowledgments}
\end{center}
\vspace{-4pt}
We would like to thank T.~Banica for useful discussions and J.W.~Barrett for motivating us to work on quantum gauge groups. 
We thank M.~Dubois-Violette for pointing out the papers \cite{MDV90} and \cite{Bic03a} and for several interesting comments,
and S.~Wang for careful reading the first version of the paper and for several clarifications and suggestions for improvements.

A part of this work was done when J.B.~was visiting IHES. F.D.~was partially supported by the italian ``Progetto FARO 2010'' and by the ``Progetto Giovani GNFM 2011'' (INDAM, Italy). B.D.~would like to thank J.M.~Lindsay for his continuous support and encouragement, during the visit to Lancaster University. He also acknowledges the support of UKIERI project entitled ``Quantum Probability, Noncommutative geometry and Quantum Information'' (QP, NCG \& QI). L.D.~was partially supported by GSQS 230836 (IRSES, EU) and PRIN 2008 (MIUR, Italy).

\smallskip

\noindent{\small
\textit{Email addresses:} \texttt{jyotishmanb@gmail.com},
\texttt{francesco.dandrea@unina.it}, \texttt{dabrow@sissa.it},\linebreak
\texttt{biswarupnow@gmail.com}.}

\end{document}